\documentclass[12pt,oneside,british,a4wide]{amsart}
\usepackage{amsmath, amssymb,verbatim,appendix}
\usepackage{hyperref}
\usepackage[mathscr]{eucal}
\usepackage{amscd}
\usepackage{amsthm}
\usepackage{stmaryrd}
\usepackage{enumerate}
\usepackage{comment}
\usepackage[latin1]{inputenc} 
\usepackage{tikz}
\usetikzlibrary{shapes,arrows}
\usepackage{cite}
\usepackage{url}

\makeatletter
\newcommand{\dotminus}{\mathbin{\text{\@dotminus}}}

\newcommand{\@dotminus}{%
  \ooalign{\hidewidth\raise1ex\hbox{.}\hidewidth\cr$\m@th-$\cr}%
}
\makeatother

\usepackage{setspace,a4wide,color,xcolor,graphicx}

\def\showauthornotes{1}

\ifnum\showauthornotes=1
\newcommand{\Authornote}[2]{{\sf\small\color{red}{[#1: #2]}}}
\else
\newcommand{\Authornote}[2]{}
\fi

\newtheorem{theorem}{Theorem}[section]
\newtheorem{lemma}[theorem]{Lemma}

\newtheorem{proposition}[theorem]{Proposition}

\newtheorem{observation}[theorem]{Observation}

\newtheorem{claim}[theorem]{Claim}

\theoremstyle{definition}

\newtheorem{definition}[theorem]{Definition}

\def\e{\epsilon}

\newcommand{\VC}{\textnormal{VC}}
\newcommand{\SVC}{\textnormal{SVC}}

\newcommand{\calC}{\mathcal{C}}

\newcommand{\calA}{\mathcal{A}}
\newcommand{\calB}{\mathcal{B}}
\newcommand{\calD}{\mathcal{D}}
\newcommand{\calP}{\mathcal{P}}

\newcommand{\calE}{\mathcal{E}}
\newcommand{\calG}{\mathcal{G}}

\newcommand{\calQ}{\mathcal{Q}}

\newcommand{\calS}{\mathcal{S}}
\newcommand{\calT}{\mathcal{T}}

\newcommand{\calV}{\mathcal{V}}

\newcommand{\calX}{\mathcal{X}}

\def\triads{\operatorname{Triads}}

\begin{document}
\title[]{Growth of regular partitions 1: Improved bounds for small slicewise VC-dimension}
\author{C. Terry}\thanks{The author was partially supported by NSF CAREER Award DMS-2115518 and a Sloan Research Fellowship}
\address{Department of Mathematics, The Ohio State University, Columbus, OH 43210, USA}
\email{terry.376@osu.edu}
\maketitle

\begin{abstract}
This is Part 1 in a series of papers about sizes of regular partitions of $3$-uniform hypergraphs.  Previous work of the author and Wolf, and independently Chernikov and Towsner, showed that $3$-uniform hypergraphs of small slicewise VC-dimension admit homogeneous partitions. The results of Chernikov and Towsner did not produce explicit bounds, while the work of the author and Wolf relied on a strong version of hypergraph regularity, and consequently produced a Wowzer type bound on the size of the partition.  This paper gives a new proof of this result, yielding $\e$-homogeneous partitions of size at most $2^{2^{\e^{-K}}}$, where $K$ is a constant depending on the slicewise VC-dimension.  This result is a crucial ingredient in Part 2 of the series, which investigates the growth of weak regular partitions in hereditary properties of $3$-uniform hypergraphs.  
\end{abstract}

\section{Introduction}

The notion of VC-dimension is an important measure of complexity in computer science, graph theory, and logic.  The \emph{VC-dimension} of a graph $G=(V,E)$, denoted $\VC(G)$, is the maximum $k$ for which there exist vertices $\{a_1,\ldots, a_k\}\cup \{b_S:S\subseteq [k]\}$ so that $a_ib_S\in E$ if and only if $i\in S$.  In there words, $VC(G)$ is the VC-dimension of the set system $(V,\calS)$ where 
$$
\calS=\{N_G(x): x\in V(G)\}.
$$
Graphs with small VC-dimension satisfy ``better" structure theorems than can be proved in the setting of arbitrary graphs.  Examples of this type of result include theorems related to property testing \cite{Alon.2000, Alon.2007}, bounds in Ramsey's theorem  \cite{erdos2, Fox.2017bfo, NSP}, removal lemmas \cite{GishbolinerShapira}, and regularity lemmas \cite{Fox.2017bfo, Alon.2007, Chernikov.2016zb, Lovasz.2010}.

Szemer\'{e}di's regularity lemma is a general tool in graph theory which says that any graph can be partitioned into a small number of parts, so that most pairs of parts behave quasirandomly (these are called the \emph{regular pairs}).   There is a parameter $\e$ which measures the quasirandomness of the regular pairs, and a bound $M$, depending on $\e$, on the number of parts needed.  It was shown by Gowers \cite{Gowers.1997} that in general, the dependence of $M$ on $\e$ is at least a tower of exponentials of height polynomial in $\e^{-1}$.  Fox and Lov\'{a}sz later obtained tight bounds for $M$ as a tower of exponentials of height about $\e^{-2}$ (see \cite{Fox.2014}).

In work related to property testing,  Alon, Fischer, and Newman showed that in bipartite graphs of small VC-dimension, an improved regularity lemma holds, where the regular pairs all have density close to $0$ or $1$, and where the number of parts is only polynomial in $\e^{-1}$ \cite{Alon.2007}.  This result was extended to all graphs with small VC-dimension by Lov\'{a}sz-Szegedy \cite{Lovasz.2010}. A quantitative improvement was given by Fox-Pach-Suk in \cite{Fox.2017bfo}, which we will state below after some definitions.  First, given a graph $G=(V,E)$ and sets $X,Y\subseteq V$, we say the pair $(X,Y)$ is \emph{$\e$-homogeneous with respect to $G$} if 
$$
d_G(X,Y):=\frac{|\{(x,y)\in X\times Y: \{x,y\}\in E\}|}{|X||Y|}\in [0,\e)\cup (1-\e,1].
$$
We then say a partition $\calP$ of $V$ is \emph{$\e$-homogeneous with respect to $G$} if at least $(1-\e)|V|^2$ pairs from $V^2$ come from an $\e$-homogeneous pair from $\calP^2$ (see Definition \ref{def:homgraph}).

\begin{theorem}[Fox, Pach, Suk \cite{Fox.2017bfo}]\label{thm:foxpachsuk}
For all $k\geq 1$ there is $c>0$ so that the following holds.  For all $\e\in (0,1/4)$, any graph with VC-dimension at most $k$ has an $\e$-homogeneous equipartition of size $t$ for some $8\e^{-1}\leq t\leq c\e^{-2k-1}$.
\end{theorem}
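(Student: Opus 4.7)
The plan is to combine the Sauer--Shelah lemma with Haussler's packing lemma to construct a partition of $V$ of polynomial size whose pairs are forced to be homogeneous by the VC-hypothesis, and then to refine into an equipartition.

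First I would apply Haussler's packing lemma to the set system $\mathcal{N} = \{N(v) : v \in V\}$ (which has VC-dimension at most $k$) to obtain a subset $S \subseteq V$ of size $O(\e^{-k})$, with implicit constant depending on $k$, such that $\{N(s) : s \in S\}$ forms an $\e$-net in Hamming distance: every $v \in V$ lies within Hamming distance $\e|V|$ of some $s \in S$. I partition $V$ into parts $V_s = \{v : s \text{ is the lex-first element of } S \text{ within Hamming distance } \e|V| \text{ of } v\}$, giving at most $|S| = O(\e^{-k})$ parts such that any two vertices in a common $V_s$ have neighborhoods within Hamming distance $2\e|V|$.

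Next I would verify that this partition is $O(\e)$-homogeneous. For a pair $(V_s, V_t)$ and $u \in V_s$, the sets $N(u)\cap V_t$ and $N(s)\cap V_t$ differ by at most $\e|V|$ vertices, so $d(V_s, V_t) = d(s, V_t) \pm \e|V|/|V_t|$. The identity $\mathbf{1}[v \in N(s)] = \mathbf{1}[s \in N(v)]$ lets me write $d(s, V_t) = \frac{1}{|V_t|}\sum_{v \in V_t}\mathbf{1}[s \in N(v)]$; combined with $|N(v) \triangle N(t)| \leq \e|V|$ for $v \in V_t$, summing over $s \in V$ gives $\sum_{s \in V} |d(s, V_t) - \mathbf{1}[st \in E]| \leq \e|V|$, so by Markov, for all but a $\sqrt\e$-fraction of $s \in V$, $d(s, V_t)$ lies within $\sqrt\e$ of the Boolean value $\mathbf{1}[st \in E] \in \{0,1\}$. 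Transferring this average from $V$ to $S$, an analogous bound holds for most $(s,t) \in S \times S$, and thus $\e$-homogeneity holds on most pairs $(V_s, V_t)$.

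The main obstacle I anticipate is twofold: first, the error $\e|V|/|V_t|$ overwhelms the estimate when some $V_t$ is sub-polynomially small, so I need to aggregate all parts of measure $< \e|V|/|S|$ into a single exceptional part of total measure $O(\e|V|)$, which contributes only $O(\e)$ to the final homogeneity count; second, the averaging argument naturally bounds the error over $s \in V$ rather than $s \in S$, and transferring requires either a random choice of $S$ in Haussler's step or a weighted reformulation using the cell sizes $|V_s|$. Finally, subdividing each remaining $V_s$ into equal blocks of size $\lceil\e|V|/16\rceil$ produces the desired $\e$-homogeneous equipartition with $O(\e^{-k-1})$ pieces after cleanup; tracking the constants in the Haussler bound and in the two-sided sampling error accounts for the exponent $2k+1$ in the stated bound $c\e^{-2k-1}$, while the lower bound $8\e^{-1}$ is simply the minimum equipartition granularity that admits $\e$-homogeneity.
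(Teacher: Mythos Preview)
The paper does not prove this theorem: Theorem~\ref{thm:foxpachsuk} is quoted from Fox--Pach--Suk \cite{Fox.2017bfo} and used as a black box, so there is no proof in the paper to compare against. Your sketch is therefore an attempt to reprove a cited result, not to recover the paper's argument.

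That said, your outline follows the genuine Fox--Pach--Suk strategy (Haussler packing on the neighbourhood system, then clean-up to an equipartition), but the middle step has a real gap that you correctly flag and then do not actually close. Knowing that $N(u)\approx N(s)$ for all $u\in V_s$ tells you that $d(V_s,V_t)\approx |N(s)\cap V_t|/|V_t|$ (when $|V_t|$ is not tiny), but it does \emph{not} by itself force $|N(s)\cap V_t|/|V_t|$ to be near $0$ or $1$. Your averaging argument shows this holds for most $s\in V$, and the ``transfer to $S$'' is exactly where the argument breaks: the representatives $s\in S$ are a negligible fraction of $V$, so a Markov bound over $V$ says nothing about them. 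The clean fix is not a random choice of $S$ but a two-sided argument: your averaging in fact shows that for each pair $(V_s,V_t)$ with neither part too small, the set $V_s$ is almost $\e$-good in the bipartite graph $G[V_s,V_t]$ in the sense of Definition~\ref{def:almostgoodgraph} (most $v\in V_t$ have $|N(v)\cap V_s|/|V_s|$ near a Boolean value), and symmetrically for $V_t$; then the Symmetry Lemma for graphs (Lemma~\ref{lem:twosticks} in this paper) forces $d(V_s,V_t)$ near $0$ or $1$. Without that symmetry step or an equivalent, the homogeneity conclusion is unjustified.

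Finally, your accounting for the exponent is too loose: you produce $O(\e^{-k-1})$ parts and then assert that ``tracking the constants'' yields $2k+1$. The extra factor of $\e^{-k}$ in Fox--Pach--Suk does not come from bookkeeping; it comes from a second application of packing (or, equivalently, from controlling the dual shatter function), and your sketch as written would give the better exponent $k+1$ if it worked --- which should be a signal that something is missing rather than that you have improved the theorem.
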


In fact,  Fox, Pach and Suk proved a much more general theorem which applies to $k$-uniform hypergraphs.  For a $k$-uniform hypergraph $H=(V,E)$, they define the \emph{VC-dimension of $H$}, denoted $\VC(H)$, to be the VC-dimension of the set system $(V,\calS)$ where 
$$
\calS=\{N_H(v_1,\dots, v_{k-1}): v_1,\ldots, v_{k-1}\in V\}.
$$
One can define homogeneous partitions for $k$-uniform hypergraphs in much the same way as above.  In particular, if $G=(V,E)$ is a $k$-uniform hypergraph and $\calP$ is a partition of $V$, then a $k$-tuple $(X_1,\ldots, X_k)\in \calP^k$ is \emph{$\e$-homogeneous with respect to $G$} if 
$$
d_G(X_1,\ldots, X_k):=\frac{|\{(x_1,\ldots, x_k)\in X_1\times \ldots \times X_k: \{x_1,\ldots, x_k\}\in E\}|}{|X_1|\cdots|X_k|}\in [0,\e)\cup (1-\e,1].
$$
We say then $\calP$ is \emph{$\e$-homogeneous with respect to $G$} if at least $(1-\e)|V|^k$ of the tuples from $V^k$ come from an $\e$-homogeneous tuple from $\calP^k$.  Fox, Pach, and Suk proved the following extension of Theorem \ref{thm:foxpachsuk} in terms of the \emph{dual VC-dimension} of a hypergraph.\footnote{ For the purposes of this introduction, it suffices to know the dual VC-dimension is bounded in terms of the VC-dimension. We refer the reader to   \cite{Fox.2014} for more details.}

\begin{theorem}\label{thm:foxpachsuk2}
For all $d,k\geq 1$, there is a constant $c$ so that the following holds.  Suppose $H=(V,E)$ is an $k$-uniform hypergraph with dual VC-dimension at most $d$.  Then it has an $\e$-homogeneous equipartition of size $\e^{-1}\leq t\leq c\e^{-2d-1}$.
\end{theorem}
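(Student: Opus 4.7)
The plan is to follow the strategy of Alon, Fischer, and Newman, which Fox, Pach, and Suk lift from graphs to $k$-uniform hypergraphs.  The input to the argument is the \emph{$\e$-approximation theorem} of Vapnik--\v{C}ervonenkis, which converts a bound on the VC-dimension of a set system into a bound on the size of a sample whose empirical measures approximate the true measures of all sets in the system.

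First, for each $v\in V$ put $B_v:=\{\xbar\in V^{k-1}:\{v\}\cup\xbar\in E\}$, a subset of $V^{k-1}$.  The assumption that the dual VC-dimension of $H$ is at most $d$ means exactly that the set system $\calS=\{B_v:v\in V\}$ on ground set $V^{k-1}$ has VC-dimension at most $d$.  Pick a random $S\subseteq V^{k-1}$ of size $m=O(d\e^{-2}\log\e^{-1})$; with positive probability $S$ is an $\e$-approximation for $\calS$, so
$$
\left|\frac{|B_v\cap S|}{|S|}-\frac{|B_v|}{|V^{k-1}|}\right|<\e \qquad \text{for all } v\in V.
$$
By Sauer--Shelah, the trace $\{B_v\cap S:v\in V\}$ has size at most $O(m^d)=O(\e^{-2d}\log^d\e^{-1})$.

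Now partition $V$ by declaring $v\sim v'$ iff $B_v\cap S=B_{v'}\cap S$.  After an equalising step (splitting large parts into equal-size pieces and redistributing leftover vertices) this becomes an equipartition $\calP$ with $t$ parts for some $\e^{-1}\leq t\leq c\e^{-2d-1}$, where the extra factor of $\e^{-1}$ absorbs the logarithm.  To verify $\e$-homogeneity, consider a $k$-tuple of parts $(X_1,\ldots,X_k)$.  Any two vertices $v,v'\in X_1$ share the same trace $B_v\cap S=B_{v'}\cap S$, so by the approximation property $|B_v|/|V^{k-1}|$ and $|B_{v'}|/|V^{k-1}|$ differ by at most $2\e$.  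Iterating this observation over the remaining $k-1$ coordinates, one deduces that $d_H(X_1,\ldots,X_k)$ concentrates around a value in $[0,\e)\cup(1-\e,1]$ for all but an $\e$-fraction of tuples of parts.

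The main obstacle is this last step.  For $k=2$ it is essentially immediate that an approximation of one-variable neighbourhoods implies homogeneity of pairs of parts, but for $k\geq 3$ one must control the sample's behaviour on product boxes $X_2\times\cdots\times X_k$ rather than just on $V^{k-1}$.  This can be handled by either strengthening the $\e$-approximation so that it holds uniformly over all such rectangles --- whose effective VC-dimension in the combined system is still bounded in terms of $d$ and $k$ --- or by iterating the type-partition coordinate-by-coordinate, producing in each pass a polynomial blow-up in the number of parts.  Since $k$ is a fixed constant, either route keeps the total part count polynomial in $\e^{-1}$ and yields the bound $c\e^{-2d-1}$.
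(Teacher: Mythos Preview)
The paper does not prove Theorem~\ref{thm:foxpachsuk2}; it is quoted from Fox--Pach--Suk as background, so there is no in-paper argument to compare against.  I can therefore only assess your sketch on its own terms.

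Your setup is right: sampling $S\subseteq V^{k-1}$, partitioning $V$ by trace, and bounding the number of cells by $O(|S|^{d})$ via Sauer--Shelah is exactly the mechanism that produces the polynomial part count.  The gap is in the homogeneity verification.  From $B_v\cap S=B_{v'}\cap S$ you only conclude that $|B_v|/|V^{k-1}|$ and $|B_{v'}|/|V^{k-1}|$ are close; this is far too weak, and even the stronger (and obtainable) conclusion that $|B_v\triangle B_{v'}|<\e|V^{k-1}|$ only tells you that all vertices in a part behave \emph{consistently}.  Nothing in your argument forces the common density $d_H(X_1,\ldots,X_k)$ to land near $0$ or $1$ rather than near $1/2$.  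Your diagnosis of the obstacle (``approximation on product boxes'') and the two proposed fixes address a different issue---uniformity of the density estimate across tuples---not the $0/1$ dichotomy itself.

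What actually pins the density to $\{0,1\}$ in the Fox--Pach--Suk argument is that the sample is taken from $V$ (so that $S^{k-1}\subseteq V^{k-1}$) and the cells are cut by the sets $N(\bar s)$ for $\bar s\in S^{k-1}$.  Then whenever each part $X_i$ contains a sample point $s_i$, every vertex of $X_1$ lies on the same side of $N(s_2,\ldots,s_k)$ by construction, and this together with the $\e$-approximation property (applied now to $X_2\times\cdots\times X_k$ versus $S^{k-1}$) forces $d_H(X_1,\ldots,X_k)$ to agree with the single bit $\mathbf 1[s_1s_2\cdots s_k\in E]$ up to $\e$.  The parts missed by $S$ are small and contribute to the error.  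A version of this two-sided mechanism, in the graph case, is exactly what is carried out in the proof of Proposition~\ref{prop:vcremoval} in this paper; your sketch is missing that step.
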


Similar results with a weaker bound were independently proved by Chernikov and Starchenko in \cite{Chernikov.2016zb}.  The work in \cite{Chernikov.2016zb} is part a line of inquiry within model theoretic combinatorics seeking to understand analogues of VC-dimension in the setting of $k$-uniform hypergraphs for $k>2$ (see e.g. \cite{Chernikov.2019b, Chernikov.2020, Terry.2021a, Terry.2021b, Terry.2022, MC,Hempel.2014}).  Two papers in this vein \cite{Terry.2021b, Chernikov.2020} show that there is a larger class of $3$-uniform hypergraphs which admit homogeneous partitions, beyond just those of bounded VC-dimension.   Stating these results requires a different  analogue of VC-dimension for $3$-uniform hypergraphs which involves the following notion of ``slice graphs."  Given a $3$-uniform hypergraph $H=(V,F)$ and $x\in V$, the \emph{slice graph of $H$ at $x$} is the graph
$$
H_x=(V,\{yz\in {V\choose 2}: xyz\in F\}).
$$
We then define the \emph{slicewise VC-dimension\footnote{In \cite{Terry.2021b}  this is referred to as \emph{weak VC-dimension}.} of $H$} to be $\SVC(H):=\max\{VC(H_x): x\in V\}$.

It is an exercise to check that for any $3$-uniform hypergraph $H$, $\SVC(H)\leq \VC(H)$.  Work of Wolf and the author showed the existence of homogeneous partitions can be extended to the more general assumption of having bounded slicewise VC-dimension\cite{Terry.2021b}.  Non-quantitative versions of this result (as well as analogues for $k$-uniform hypergraphs) were independently proved by Towsner and Chernikov in \cite{Chernikov.2020}.

\begin{theorem}[Chernikov-Towsner \cite{Chernikov.2020}, Terry-Wolf \cite{Terry.2021b}]\label{thm:slvcwow}
For all $k\geq 1$ and $\e>0$, there is an integer $M(\e)$ so that any $3$-graph with slicewise VC-dimension less than $k$ admits an $\e$-homogeneous partition with at most $M(\e)$ parts.
\end{theorem}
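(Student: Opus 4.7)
The plan is a two-level regularization combining Theorem \ref{thm:foxpachsuk} with a type-clustering argument. At the inner level, we apply Theorem \ref{thm:foxpachsuk} to each slice graph $H_x$ individually, exploiting the fact that $\VC(H_x) \leq \SVC(H) < k$. At the outer level, we cluster vertices by a ``density profile'' of their slice graphs, assembling the per-vertex partitions into a single partition of $V$ that is $\e$-homogeneous for $H$.

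Concretely, fix auxiliary parameters $\e_1 \ll \e$ polynomial in $\e$. For each $x \in V$, Theorem \ref{thm:foxpachsuk} yields an $\e_1$-homogeneous equipartition $\calQ_x$ of $V$ into $M_1 = O(\e_1^{-(2k+1)})$ parts. Choose a base partition $\calP_0$ of $V$ of size $M_0$ that is fine enough to approximately resolve every $\calQ_x$ as a coarsening of $\calP_0$. For each $x \in V$ define the profile
\[
\rho(x) \;=\; \bigl( d_{H_x}(B, C) \bigr)_{B, C \in \calP_0},
\]
rounded to granularity $\e$. Declare $x \sim x'$ iff $\rho(x) = \rho(x')$ after rounding. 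The output partition $\calP$ is then the common refinement of $\calP_0$ with the $\sim$-classes, of size at most $M_0 \cdot \e^{-M_0^2}$. Homogeneity of $\calP$ for $H$ follows from a three-step chain: for a triple $(A, B, C) \in \calP^3$, the density $d_H(A,B,C)$ averages the slice densities $d_{H_x}(B,C)$ over $x \in A$; these are nearly constant in $x$ by profile equality; and close to $0$ or $1$ for most $(B,C) \in \calP_0^2$ by the slicewise $\e_1$-homogeneity of the $\calQ_x$.

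The main obstacle is the parameter calibration. For $\rho$ to encode $\calQ_x$ faithfully, $\calP_0$ must be fine enough to separate the $M_1$ classes of $\calQ_x$ simultaneously for every $x$, which forces $M_0$ to be singly exponential in $1/\e$ (of order roughly $M_1^{M_1}$); the profile count $\e^{-M_0^2}$ then contributes a second exponential, matching the target bound $2^{2^{\e^{-K}}}$ stated in the abstract. A further difficulty lies in the quantitative verification of $\e$-homogeneity: one must bound the measure of bad triples arising from vertices whose profile is noisy, and from pairs $(B,C) \in \calP_0^2$ on which some $\calQ_x$ fails $\e_1$-homogeneity. Both of these error terms are absorbed by taking $\e_1$ polynomially smaller than $\e$ and applying union bounds over the polynomial-in-$1/\e_1$ parts of each $\calQ_x$, so that all losses remain under $\e$ at the level of triples in $V^3$.
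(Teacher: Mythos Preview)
The proposal has a genuine gap at the step where you ``choose a base partition $\calP_0$ of $V$ of size $M_0$ that is fine enough to approximately resolve every $\calQ_x$ as a coarsening of $\calP_0$.'' No such $\calP_0$ of size bounded independently of $|V|$ exists in general. The partitions $\calQ_x$ are $|V|$ many genuinely different equipartitions of $V$, one per vertex, with no a priori relationship between them; a common (even approximate) refinement could require on the order of $M_1^{|V|}$ parts. Your suggested size $M_0\approx M_1^{M_1}$ is unjustified. And without $\calP_0$ refining $\calQ_x$, the third link of your chain breaks: $\e_1$-homogeneity of $\calQ_x$ says nothing about the densities $d_{H_x}(B,C)$ for pairs $(B,C)\in\calP_0^2$. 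Equivalently, asking for a single $\calP_0$ that is simultaneously homogeneous for every slice graph $H_x$ is essentially the conclusion you are trying to prove.

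The paper avoids this by never seeking a common refinement of the $\calQ_x$. Instead it groups the vertices $x$ according to the edge-colored \emph{reduced graph} $\calG_x$ of $\calP_x$ (a labelled $3$-edge-colored bipartite graph on at most $\ell$ vertices, hence with at most $3^{\ell^2}$ possibilities, independent of $|V|$). The real work is Proposition~\ref{prop:mainslice}: within a single equivalence class, the slicewise VC hypothesis is invoked a \emph{second} time (Observation~\ref{ob}) to show that certain auxiliary edge-colored bipartite graphs on $A\cup V$ contain no $U(k)$, which licenses a further application of the packing/homogeneity machinery (Proposition~\ref{prop:vcremoval}) to build an almost $\e$-good partition of $V$ for that class. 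Only then is a common refinement taken, over the boundedly many classes, and the symmetry lemma (Lemma~\ref{lem:symmetry} via Proposition~\ref{prop:goodhom}) converts almost-good partitions of $U,V,W$ into a homogeneous partition of $H$. Your profile-rounding idea is in the spirit of the outer grouping step, but it misses both that the grouping must be by a combinatorial invariant with boundedly many values and that a second, nontrivial use of the VC hypothesis is needed inside each group.
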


While the proof of the author and Wolf of Theorem \ref{thm:slvcwow} is quantitative, it relies on a strong version of the hypergraph regularity lemma, and thus yields a Wowzer style upper bound on the partition itself.  The main result of this paper is the following quantitative improvement.

\begin{theorem}\label{thm:slvccor}
For all $k\geq 1$ there are $K,\e^*>0$ so that the following holds. Suppose $0<\e<\e^*$ and $H$ is a sufficiently large $3$-uniform hypergraph with slicewise VC-dimension less than $k$. Then there exists an $\e$-homogeneous partition for $H$ of size at most $2^{2^{\e^{-K}}}$.
\end{theorem}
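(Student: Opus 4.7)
The plan is to prove Theorem \ref{thm:slvccor} by a two-level aggregation of slicewise applications of Theorem \ref{thm:foxpachsuk}. The overall structure separates the roles of the three coordinates of a triple, using one exponential level to build a ``pair partition'' $\calR$ of $V$ and a second level to classify vertices by their behavior as slice indices. Each of the two steps costs one exponential in $\epsilon^{-1}$, yielding the double-exponential final bound.

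First, set $\epsilon' = \epsilon^{c}$ for a suitable $c = c(k)$. For every $x \in V$, apply Theorem \ref{thm:foxpachsuk} to the slice graph $H_x$ (which has $\VC(H_x) < k$) to obtain an $\epsilon'$-homogeneous equipartition $\calP_x = (P_x^1, \ldots, P_x^t)$ with $t \leq c_k (\epsilon')^{-O(k)} = \mathrm{poly}(\epsilon^{-1})$. Then I would reduce the family $\{\calP_x\}_{x \in V}$ to a single base partition $\calR$ of $V$. Since each $P_x^i$ is a Boolean combination of slice-neighborhoods $N_{H_x}(y)$, and these have VC-dimension $<k$ for fixed $x$, a VC sampling argument (for instance the $\epsilon'$-net theorem, or Haussler's packing lemma) produces a sample $I \subseteq V$ of polynomial size so that $\calR := \bigvee_{x \in I} \calP_x$ satisfies: for most $x$, each part of $\calP_x$ is $\epsilon'$-approximated by a union of parts of $\calR$. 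This gives $|\calR| \leq t^{|I|} = 2^{\mathrm{poly}(\epsilon^{-1})}$, the first exponential.

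Next, for each $x \in V$ define a signature $\sigma_x \in \{0,1\}^{|\calR|^2}$ by rounding each density $d_{H_x}(R, R')$ for $(R, R') \in \calR^2$ to the nearer of $0$ or $1$ (a small proportion of $x$'s are absorbed into an exceptional class). Partition $V$ in the $x$-coordinate into classes $Q_1, \ldots, Q_m$ according to $\sigma$-fibers, so $m \leq 2^{|\calR|^2} = 2^{2^{\mathrm{poly}(\epsilon^{-1})}}$. The final partition is $\calP^* := \calR \vee \{Q_j\}$, of size at most $|\calR| \cdot m \leq 2^{2^{\epsilon^{-K}}}$ for an appropriate $K = K(k)$. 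To verify $\epsilon$-homogeneity, note that for a triple $(A, B, C) \in (\calP^*)^3$ with $A \subseteq Q_j$ and $B, C$ each contained in a part of $\calR$, $d_H(A, B, C) = |A|^{-1} \sum_{x \in A} d_{H_x}(B, C)$ is close to $0$ or $1$ because $\sigma_x(B, C)$ is constant for $x \in Q_j$; the exceptional classes contribute at most $\epsilon |V|^3$ tuples by construction.

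The main obstacle is Step 2: the slicewise VC-dimension bound gives a good set system only for each fixed $x$, while the aggregate family $\{N_{H_x}(y) : x, y \in V\}$ across all slices need not have bounded VC-dimension. A naive application of Sauer--Shelah therefore fails across slices. I anticipate circumventing this by averaging over $x$ before passing to the common refinement, so that the approximation cost telescopes and keeps $|\calR|$ at $2^{\mathrm{poly}(\epsilon^{-1})}$ rather than a further exponential. If one instead had to compute a true common refinement of all $\{\calP_x\}$, the bound would blow up to a triple exponential.
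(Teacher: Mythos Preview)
Your proposal has a genuine gap at Step 2, and you have correctly located it yourself. You want a polynomial-size sample $I\subseteq V$ so that the common refinement $\calR=\bigvee_{x\in I}\calP_x$ approximates $\calP_x$ for most $x$. But the VC hypothesis is slicewise: for each fixed $x$ the sets $\{N_{H_x}(y):y\in V\}$ have bounded VC-dimension, while the family $\{N_{H_x}(y):x,y\in V\}$ you would need to sample over has no such bound. Haussler packing or $\epsilon$-net arguments give you nothing here, and ``averaging over $x$ before passing to the common refinement'' is not an argument --- you would need to explain what quantity is being averaged and why a small sample controls it. As you note, a naive common refinement of all $\calP_x$ costs a further exponential; the proposal gives no mechanism that avoids this. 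Step 4 then inherits the problem: even if $\sigma_x(B,C)$ is constant on $Q_j$, you only know $d_{H_x}(B,C)$ is near $0$ or $1$ when $B,C$ happen to be (approximated by) parts of $\calP_x$, which is exactly what Step 2 was supposed to guarantee and did not.

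The paper's route sidesteps this obstacle rather than solving it. Instead of trying to approximate all the $\calP_x$ by one partition, it groups the $x$'s by the \emph{isomorphism type} of the reduced edge-coloured graph $\calG_x$ on $\calP_x$ (at most $3^{\ell^2}$ types, one exponential). Within a single type $X$, the common reduced graph plus the slicewise VC bound yields a genuine combinatorial constraint: certain auxiliary edge-coloured bipartite graphs on $X\cup V$ contain no $E_0/E_1$-copy of $U(k)$ (this is Observation \ref{ob}, proved by exhibiting a witness vertex $c$ that would force $\VC(H_c)\geq k$). That constraint is exactly what is needed to run a packing-type structure theorem (Proposition \ref{prop:vcremoval}) and produce an almost-good partition of $V$ relative to $H[X,V,W]$. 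Refining these over the $3^{\ell^2}$ types gives the second exponential, and a symmetry lemma converts ``almost good'' into ``homogeneous''. The missing idea in your outline is this passage from slicewise VC to a forbidden-$U(k)$ statement in an auxiliary edge-coloured graph; without it, nothing couples the different slices.
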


Theorem \ref{thm:slvccor} answers a question posed in \cite{Terry.2021b}.  This theorem is also crucial for Part 2 of this series, which considers the possible growth rates of weak regular partitions in hereditary properties of $3$-uniform hypergraphs.  

The biggest open problem which remains from this paper and its sequels is whether the bound in Theorem \ref{thm:slvccor} can be improved.  It is shown in Part 2 that there exist arbitrarily large $3$-uniform hypergraphs with slicewise VC-dimension $2$, and which require at least $2^{\e^{-C}}$ parts in any $\e$-homogeneous partition.  Thus, if the bound in Theorem \ref{thm:slvccor} can be improved, it will not be beyond a single exponential in $\e^{-1}$.  The author conjectures the form of the bound in Theorem \ref{thm:slvccor} is tight.  We refer the reader to Part 2 for more discussion of this question.

\subsection{Acknowledgement} The author would like to thank Julia Wolf for helpful conversations regarding this result.  The author would also like to thank Henry Towsner for discussions about this kind of result.  Forthcoming work of the author and Towsner will address generalizations of these results to higher arities.

\subsection{Notation}\label{ss:notation}
For a set $V$ and an integer $k\geq 1$, define ${V\choose k}=\{X\subseteq V: |X|=k\}$.  A \emph{$k$-uniform hypergraph} is a pair $(V,E)$ where $E\subseteq {V\choose k}$.  Given a $k$-uniform hypergraph $G$, $V(G)$ will denote its vertex set of $V$ and $E(G)$ will denote its edge set of $G$. A \emph{graph} is a $2$-uniform hypergraphs as \emph{graphs}. For $k\geq 3$, a \emph{$k$-graph} is a $k$-uniform hypergraph.  Given a $k$-graph $(V,E)$, an induced sub-$k$-graph has the form $(V',E')$ where $V'\subseteq V$ and $E'=E\cap {V'\choose 2}$.   We let $G[V']$ be the induced sub-$k$-graph of $G$ with vertex set $V'$.

We will use the notation $xy$ for a two element set $\{x,y\}$, and $xyz$ for a three element set $\{x,y,z\}$.  For sets $X,Y,Z$, define
\begin{align*}
K_2[X,Y]&=\{xy: x\in X, y\in Y, x\neq y\}\text{ and }\\
K_3[X,Y,Z]&=\{xyz: x\in X, y\in Y, z\in Z, x\neq y, y\neq z, x\neq z\}.
\end{align*}
Suppose $G=(V,E)$ is a graph.  We let $\overline{E}=\{(x,y)\in V^2: xy\in E\}$.  Given sets $X,Y\subseteq V$, the \emph{density of $(X,Y)$ in $G$} is 
$$
d_G(X,Y):=|\overline{E}\cap (X\times Y)|/|X||Y|.
$$ 
For any $v\in V$, the \emph{neighborhood of $v$ in $G$} is $N_G(v)=\{x\in X: xy\in E\}$.  Given a subset $E'\subseteq {V\choose 2}$, we will also write 
\begin{align}\label{nbd}
N_{E'}(v)=\{x\in X: xy\in E\}.
\end{align} 

Suppose now $H=(V,E)$ is a $3$-uniform hypergraph.    We let 
$$
\overline{E}=\{(x,y,z)\in V^3: xyz\in E\}.
$$
For disjoint sets $X,Y,Z\subseteq V$, define
$$
d_H(X,Y,Z):=|\overline{E}\cap (X\times Y\times Z)|/|X||Y||Z|.
$$
For any $x,y\in V$, we let
$$
N_H(x)=\{uv\in {V\choose 2}: xuv\in E\}\text{ and }N_H(xy)=\{z\in V: xyz\in E\}.
$$
We will write $E^1=E$ and $E^0={V\choose k}\setminus E$.  More generally, given any set $E'\subseteq{V\choose 3}$, we will also write
$$
N_{E'}(x)=\{uv\in {V\choose 2}: xuv\in E'\}\text{ and }N_{E'}(xy)=\{z\in V: xyz\in E'\}.
$$

Given integers $k\leq \ell$, We say a $k$-graph $G=(V,E)$ is \emph{$\ell$-partite} if there is a partition $V=V_1\cup \ldots \cup V_{\ell}$ so that for every $e\in E$, there are $1<i_1<\ldots<i_k\leq \ell$ so that for every $e\in E$ and $1\leq i\leq \ell$, $|e\cap V_i|\leq 1$.  We will write $G=(V_1\cup \ldots \cup V_{\ell}, E)$ to denote that $G$ is $\ell$-partite with the displayed partition of vertices.
 
Given $r_1,r_2\in \mathbb{R}$ and $\e>0$, we use the notation $r_1=r_2\pm \e$ or $r_1\approx_{\e}r_2$ to mean that $r_1\in (r_2-\e,r_2+\e)$. For all integers $n\geq 1$, $[n]=\{1,\ldots, n\}$.  An \emph{equipartition} of a set $V$ is a partition $V=V_1\cup \ldots \cup V_t$ satisfying $||V_i|-|V_j||\leq 1$ for all $1\leq i,j\leq t$.  We will require  the following growth functions.

Given a set $V$, a partition $\calP$ of $V$ and a subset $X\subseteq V$, let $\calP|_X:=\{X\cap S: S\in \calP\}$. Note that in this case, $\calP|_X$ is a partition of $X$.

 \section{Preliminaries}
 
 This section contains some required preliminaries.  These will include lemmas about averaging and symmetry in $3$-graphs (Section \ref{ss:sym}),  definitions and results about VC-dimension and homogeneity (Section \ref{ss:vc}), and finally,  tripartite decompositions and related notions (Section \ref{ss:dec}).
 
 \subsection{Averaging and symmetry}\label{ss:sym}
 
 This section contains two crucial lemmas.  The first is a simple averaging lemma.

\begin{lemma}\label{lem:averaging}
Let $a,b,\e\in (0,1)$ satisfy $ab=\e$. Suppose $A\subseteq X$ and $|A|\geq (1-\e)|X|$.  For any partition $\calP$ of $X$, if we let $\Sigma=\{Y\in \calP: |A\cap Y|\geq (1-a)|Y|\}$, then $|\bigcup_{Y\in \Sigma}Y|\geq (1-b)|X|$.
\end{lemma}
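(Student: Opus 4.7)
The plan is to argue by contrapositive on the complement partition classes. Let $\Sigma^c := \calP \setminus \Sigma$ denote the collection of ``bad'' parts, i.e.\ those $Y \in \calP$ for which $|A \cap Y| < (1-a)|Y|$, equivalently $|Y \setminus A| > a|Y|$. I want to show that the union of the bad parts is small, namely $|\bigcup_{Y \in \Sigma^c} Y| < b|X|$; taking complements in $X$ then yields the claim.

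The key step is a single averaging bound. Since the partition classes are disjoint, summing the local deficiencies gives
\[
|X \setminus A| \;=\; \sum_{Y \in \calP} |Y \setminus A| \;\geq\; \sum_{Y \in \Sigma^c} |Y \setminus A| \;>\; \sum_{Y \in \Sigma^c} a|Y| \;=\; a\cdot\Bigl|\bigcup_{Y \in \Sigma^c} Y\Bigr|.
\]
On the other hand, the hypothesis $|A| \geq (1-\e)|X|$ gives $|X \setminus A| \leq \e|X|$. Combining these and dividing by $a$ yields $|\bigcup_{Y \in \Sigma^c} Y| < (\e/a)|X| = b|X|$, using precisely the relation $ab = \e$.

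Finally, taking complements inside $X$ gives $|\bigcup_{Y \in \Sigma} Y| = |X| - |\bigcup_{Y \in \Sigma^c} Y| \geq (1-b)|X|$, which is the desired conclusion. No step here poses any real obstacle; the only subtlety is remembering to sum the deficiencies $|Y \setminus A|$ rather than working with $|A \cap Y|$ directly, and to use the identity $ab = \e$ at the end to convert the bound on $|X \setminus A|$ into a bound in terms of $b$.
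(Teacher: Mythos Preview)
Your proof is correct and follows essentially the same approach as the paper: both arguments bound $|X\setminus A|$ from below by $a\sum_{Y\in\calP\setminus\Sigma}|Y|$ using the defining inequality for parts outside $\Sigma$, then combine with $|X\setminus A|\leq \e|X|$ and $ab=\e$. The only cosmetic difference is that the paper phrases it as a proof by contradiction while you argue directly.
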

\begin{proof}
Suppose towards a contradiction that $|\bigcup_{Y\in \Sigma}Y|< (1-b)|X|$. Then 
$$
|X\setminus A|\geq \sum_{Y\in \calP\setminus \Sigma}|Y\setminus A|>\sum_{Y\in \calP\setminus \Sigma}a|Y|=a\sum_{Y\in \calP\setminus \Sigma}|Y|\geq ab|X|=\e |X|,
$$
 a contradiction.
\end{proof}

The second important lemma is a ``symmetry lemma" for $3$-partite $3$-graphs  We begin by stating Lemma \ref{lem:twosticks} below, which is a simpler symmetry lemma in the setting of graphs.  This will involve an adaptation of the notion of ``good sets", first defined in \cite{Malliaris.2014}. 

\begin{definition}\label{def:almostgoodgraph}
Suppose $G=(A\cup B, E)$ is a bipartite graph and $\e>0$.  We say a subset $X\subseteq B$ is \emph{almost $\e$-good with respect to $G$} if there is a set $A'\subseteq A$ so that $|A'|\geq (1-\e)|A|$ and for all $a\in A'$, 
$$
\frac{|N_G(a)\cap X|}{|X|}\in [0,\e)\cup (1-\e,1].
$$
\end{definition}

 The symmetry lemma for graphs,   Lemma \ref{lem:twosticks} below, says that if $G=(A\cup B, E)$ is a bipartite graph, and both $A$ and $B$ are almost $\e$-good with respect to $G$, then $d_G(A,B)$ is close to $0$ or $1$.   A proof appears in \cite{Terry.2021b} (see Lemma 4.9 there).

\begin{lemma}[Symmetry Lemma for Graphs]\label{lem:twosticks}
For all $0<\e<1/4$ there is $n$ such that the following holds.  Suppose $G=(U\cup W, E)$ is a bipartite graph, $|U|,|W|\geq n$, and both $U$ and $W$ are almost $\e$-good sets in $G$.  Then $d_G(U,W)\in [0,2\e^{1/2})\cup (1-2\e^{1/2},1]$. 
\end{lemma}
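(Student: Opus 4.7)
The plan is to introduce the partition of $U$ and $W$ induced by almost-goodness and use double counting to eliminate ``intermediate'' configurations. Let $U' \subseteq U$ and $W' \subseteq W$ be the witness sets from Definition \ref{def:almostgoodgraph}, so $|U'| \geq (1-\e)|U|$, $|W'| \geq (1-\e)|W|$, each $u \in U'$ satisfies $|N_G(u) \cap W|/|W| \in [0,\e) \cup (1-\e,1]$, and symmetrically for $w \in W'$. Split $U' = U'_0 \cup U'_1$ and $W' = W'_0 \cup W'_1$ according to whether the density is near $0$ or near $1$. I want to show that one of $W'_0, W'_1$ accounts for nearly all of $W$, as this immediately gives $d_G(U,W)$ close to $0$ or to $1$.

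The heart of the argument is a pair of symmetric double counts. Counting edges between $U'_1$ and $W'_0$: from the $W'_0$ side, each vertex has at most $\e|U|$ neighbors in $U$, giving an upper bound $\e|U||W'_0|$; from the $U'_1$ side, each vertex has at most $\e|W|$ non-neighbors in $W$, hence at least $|W'_0| - \e|W|$ neighbors in $W'_0$, giving a lower bound $|U'_1|(|W'_0| - \e|W|)$. When $|W'_0| \geq 2\e|W|$ we have $|W'_0| - \e|W| \geq |W'_0|/2$, and the comparison yields $|U'_1| \leq 2\e|U|$; in other words, the pair $(|W'_0|, |U'_1|)$ cannot both exceed $2\e$-fractions of their ambient sets. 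The analogous double count on edges between $U'_0$ and $W'_1$ rules out $(|U'_0|, |W'_1|)$ both being large.

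Since $|U'_0| + |U'_1| \geq (1-\e)|U|$, one of $|U'_0|, |U'_1|$ is at least $(1-\e)|U|/2 > 2\e|U|$ for small $\e$. If $|U'_1| > 2\e|U|$ then the first double count forces $|W'_0| < 2\e|W|$, so $|W'_1| \geq (1 - 3\e)|W|$; summing over these vertices gives $d_G(U,W) \geq (1-\e)(1-3\e) \geq 1 - 4\e$. If instead $|U'_0| > 2\e|U|$, the second double count forces $|W'_1| < 2\e|W|$ and a dual edge count gives $d_G(U,W) \leq 4\e$. For $\e$ small enough (certainly $\e < 1/16$), these place $d_G(U,W)$ in $[0, 2\e^{1/2}) \cup (1 - 2\e^{1/2}, 1]$ as required. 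The hypothesis $|U|,|W| \geq n$ is used only to absorb rounding in the fractional bounds; the genuine substance is ensuring that the two double counts together simultaneously rule out both mixed configurations, which they do by exploiting the symmetric roles of $U$ and $W$. The main thing to watch is that the lower-bound term $|W'_0| - \e|W|$ stays nonnegative whenever it is invoked, which is exactly why the case split branches on the threshold $2\e$.
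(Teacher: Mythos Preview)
Your argument is correct. The paper does not actually supply a proof of this lemma; it merely cites \cite{Terry.2021b} (Lemma 4.9 there), so there is nothing to compare against in the present paper. For completeness: the double-counting argument you give is the standard one, and the details check out. The key observation that the conclusion is vacuous for $\e \geq 1/16$ (since then $2\e^{1/2} \geq 1/2$) correctly disposes of the range where the pigeonhole step $(1-\e)/2 > 2\e$ might fail, and the two double counts on $e(U'_1,W'_0)$ and $e(U'_0,W'_1)$ do exactly the work you claim. One small remark: you do not need to worry about both cases holding simultaneously, since if both $|U'_1| > 2\e|U|$ and $|U'_0| > 2\e|U|$ held, the two conclusions $|W'_0| < 2\e|W|$ and $|W'_1| < 2\e|W|$ would force $|W'| < 4\e|W|$, contradicting $|W'| \geq (1-\e)|W|$; but your phrasing already handles this implicitly by just asserting the density conclusion in whichever case applies.
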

Our goal is to prove a ternary analogoue of Lemma \ref{lem:twosticks}. We begin with an analogue of Definition \ref{def:almostgoodgraph}.

\begin{definition}\label{def:almostgood}
Suppose $H=(V_1\cup V_2\cup V_3,E)$ is a $3$-partite $3$-graph and  $\{i_1,i_2,i_3\}=\{1,2,3\}$.  A set $X\subseteq V_{i_1}$ is \emph{almost $\e$-good with respect to $H$} if for at least $(1-\e)|V_{i_2}||V_{i_3}|$ many $(y,z)\in V_{i_2}\times V_{i_3}$ satisfy $|N_H(yz)\cap X|/|X|\in [0,\e)\cup (1-\e,1]$.
\end{definition}

We now state and prove our symmetry lemma for $3$-graphs, which says that a tripartite graph in which all three parts are almost $\e$-good has density close to $0$ or $1$.

\begin{lemma}[Symmetry Lemma for $3$-graphs]\label{lem:symmetry}
Let $0<\e<1/100$ and suppose $H=(V_1\cup V_2\cup V_3, E)$ is a tripartite $3$-graph and assume $V_1,V_2,V_3$ are each almost $\e$-good with respect to $H$.  Then $d_H(V_1,V_2,V_3)\in [0,4\e^{1/16})\cup (1-4\e^{1/16},1]$.
\end{lemma}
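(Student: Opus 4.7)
The plan is to reduce to the graph symmetry lemma (Lemma~\ref{lem:twosticks}) by considering, for each $v \in V_1$, the slice graph $G_v := (V_2 \cup V_3, \{yz : vyz \in E\})$. The almost-$\e$-goodness hypotheses on $V_2$ and $V_3$ will force $G_v$ to be nearly homogeneous for most $v \in V_1$, while the almost-$\e$-goodness of $V_1$ ties these slice graphs together via a double-counting argument to force the overall density close to $0$ or $1$.

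First, applying Lemma~\ref{lem:averaging} (with parameters $a = b = \e^{1/2}$) to the almost-$\e$-goodness of $V_2$ and $V_3$, I obtain a set $V_1^{\mathrm{good}} \subseteq V_1$ with $|V_1^{\mathrm{good}}| \geq (1 - 2\e^{1/2})|V_1|$ such that for every $v \in V_1^{\mathrm{good}}$, both $V_2$ and $V_3$ are almost-$\e^{1/2}$-good in $G_v$. Lemma~\ref{lem:twosticks} then gives $d_{G_v}(V_2, V_3) \in [0, 2\e^{1/4}) \cup (1 - 2\e^{1/4}, 1]$, partitioning $V_1^{\mathrm{good}}$ into $V_1^{\mathrm{high}}$ (slice density close to $1$) and $V_1^{\mathrm{low}}$ (slice density close to $0$). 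Setting $\a := |V_1^{\mathrm{high}}|/|V_1|$ and $\beta := |V_1^{\mathrm{low}}|/|V_1|$, averaging the per-slice densities over $V_1$ yields the sandwich
\[
\a(1 - 2\e^{1/4}) \;\leq\; d \;\leq\; \a + 2\e^{1/4} + 2\e^{1/2},
\]
where $d := d_H(V_1, V_2, V_3)$.

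Next, I use the almost-$\e$-goodness of $V_1$ to define $T_1 \subseteq V_2 \times V_3$ (pairs with density at least $1-\e$ into $V_1$) and $T_0$ (density less than $\e$), with relative sizes $p$ and $q$ satisfying $p + q \geq 1-\e$. A double count of edges between $V_1^{\mathrm{low}}$ and $T_1$---using both that each $(y,z) \in T_1$ has at least $(\beta - \e)|V_1|$ neighbors in $V_1^{\mathrm{low}}$, and that each $v \in V_1^{\mathrm{low}}$ has slice degree below $2\e^{1/4}|V_2||V_3|$---yields $p(\beta - \e) \leq 2\beta\e^{1/4}$. In particular, $p \geq 4\e^{1/4}$ forces $\beta \leq 2\e$.

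The proof concludes by a simple dichotomy. If $d < 4\e^{1/16}$, there is nothing to show. Otherwise, integrating the $V_1$-goodness over $V_2 \times V_3$ directly gives $p \geq d - 2\e$, hence $p \geq 4\e^{1/16} - 2\e \geq 4\e^{1/4}$ for $\e < 1/100$. Thus $\beta \leq 2\e$, so $\a \geq 1 - 2\e^{1/2} - 2\e$, and the lower side of the sandwich yields $d \geq (1 - 2\e^{1/2} - 2\e)(1 - 2\e^{1/4}) > 1 - 4\e^{1/16}$. The main technical obstacle is purely bookkeeping: tracking the $\e$-exponent losses from Lemma~\ref{lem:averaging} ($\e^{1/2}$), from Lemma~\ref{lem:twosticks} applied at parameter $\e^{1/2}$ ($\e^{1/4}$), and from the double count, and then verifying that the comparison $4\e^{1/16} - 2\e \geq 4\e^{1/4}$ needed to trigger the dichotomy holds comfortably throughout the range $\e < 1/100$.
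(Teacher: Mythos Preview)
Your argument is correct and reaches the stated bound, but it proceeds along a genuinely different route from the paper.

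The paper encodes $H$ as a single bipartite graph $\Gamma$ with parts $V_1$ and $K_2[V_2,V_3]$, verifies that both sides of $\Gamma$ are almost-good (the hard part being a direct combinatorial argument, Claim~\ref{cl:sym1}, showing that for most $x\in V_1$ the $E_{12}$-neighbourhood of $x$ is polarised), and then applies Lemma~\ref{lem:twosticks} \emph{once} to $\Gamma$. You instead apply Lemma~\ref{lem:twosticks} to each slice $G_v$ for $v\in V_1^{\mathrm{good}}$, thereby obtaining the high/low partition of $V_1^{\mathrm{good}}$, and then finish with a direct double-count against the $T_1$/$T_0$ decomposition coming from the $V_1$-goodness. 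In effect you and the paper swap roles: where the paper uses a direct argument (Claim~\ref{cl:sym1}) you invoke the graph lemma, and where the paper invokes the graph lemma (to $\Gamma$) you use a direct double-count. Your final step is arguably cleaner than the paper's Claim~\ref{cl:sym1}, at the cost of applying Lemma~\ref{lem:twosticks} at the weaker parameter $\e^{1/2}$ rather than $\e$; the resulting exponents still fit comfortably under $4\e^{1/16}$, as you verify. One minor remark: Lemma~\ref{lem:twosticks} carries a ``sufficiently large $n$'' hypothesis which you apply to the fixed parts $V_2,V_3$; this is the same implicit largeness assumption present in the paper's own proof.
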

\begin{proof}
Given $i,j,k$ satisfying $\{i,j,k\}=\{1,2,3\}$, define
$$
E_{ij}=\{xy\in K_2[V_i,V_j]: |N_H(xy)|\leq \e|V_k|\text{ or }|N_H(xy)|\geq (1-\e)|V_k|\}.
$$
Our assumption implies that  for each $ij\in {[3]\choose 2}$, $|E_{ij}|\geq (1-\e)|V_i||V_j|$.  Consider now the bipartite graph $\Gamma$ with vertex set $V_1\cup K_2[V_2,V_3]$ and edge set 
$$
E(\Gamma)=\{(x, yz)\in V_1\cup K_2[V_2,V_3]: xyz\in H\}.
$$
It suffices to show $d_{\Gamma}(V_1,K_2[V_2,V_3])\in [0,2\e^{1/16})\cup (1-2\e^{1/16},1]$.  We will show this by applying Lemma \ref{lem:twosticks} to $\Gamma$. To do this, we need to verify the relevant hypotheses.  

Since $|E_{23}|\geq (1-\e)|K_2[V_2,V_3]|$, we already know $K_2[V_2,V_3]$ is almost $\e$-good with respect to $\Gamma$.  Our next goal is to show  $V_1$ is almost $2\e^{1/4}$-good with respect to $\Gamma$.  We begin by defining the following subsets of $E_{12}$.
\begin{align*}
E_{12}^1&=\{xy \in K_2[V_1,V_2]: |N_H(xy)|\geq (1-\e)|V_3|  \}\text{ and } \\
E_{12}^0  &=\{xy \in K_2[V_1,V_2]: |N_H(xy)|\leq \e|V_3| \}.
\end{align*}
Note $E_{12}=E_{12}^1\sqcup E_{12}^0$.  Consider the following subset of $V_1$ (recall notation (\ref{nbd})).
$$
V_1'=\{x\in V_1: |N_{E_{12}}(x)|\geq (1-\e^{1/2})|V_2|\text{ and }|N_{E_{13}}(x)|\geq (1-\e^{1/2})|V_3|\}.
$$
Since $|E_{12}|\geq (1-\e)|V_3|$ and $|E_{13}|\geq (1-\e)|V_2|$, $|V_1'|\geq (1-2\e^{1/2})|V_1|$. 

\begin{claim}\label{cl:sym1}
For each $x\in V_1'$, either $|N_{E_{12}^1}(x)|\geq (1-\e^{1/4})|V_2|$ or $|N_{E_{12}^0}(x)|\geq (1-\e^{1/4})|V_2|$. 
\end{claim}
\begin{proof} Suppose towards a contradiction there is some $x\in V_1'$ so that
$$
\min\{|N_{E^1_{12}}(x)|,|N_{E^0_{12}}(x)|\}\geq \e^{1/4}|V_2|.
$$
To ease notation, set
\begin{align*}
V_2^1=N_{E^1_{12}}(x)\text{ and }V_2^0=N_{E^0_{12}}(x).
\end{align*}
Then $V_2^1\cup V_2^0\subseteq V_2$ and $\min\{|V_2^1|,|V_2^0|\}\geq \e^{1/4}|V_2|$.  Let $H_x$ be the slice graph of $H$ at $x$, i.e. the graph with vertex set $V(H)$ and edge set $\{yz\in {V\choose 2}: xyz\in E(H)\}$.  By assumption, the following hold.
\begin{align}
|E(H_x)\cap K_2[V_2^1, V_3]|&\geq \sum_{y\in V_2^1}|N_H(xy)|\geq (1-\e)|V^1_2||V_3|\text{ and }\label{al:e1}\\
|K_2[V_2^0, V_3]\setminus E(H_x)|&\geq \sum_{y\in V_2^0}|V_3\setminus N_H(xy)|\geq (1-\e)|V^0_2||V_3|.\label{al:e0}
\end{align}
Define
\begin{align*}
V_3^1&=\{z\in V_3: |N_H(xz)\cap V_2^1|\geq (1-\e^{1/2})|V_2^1|\}\text{ and }\\
V_3^0&=\{z\in V_3: | V_2^0\setminus N_H(xz)|\geq (1-\e^{1/2})|V_2^0|\}.
\end{align*}
By (\ref{al:e1}) and (\ref{al:e0}), we have $|V_3^1|\geq (1-\e^{1/2})|V_3|$ and $|V_3^0|\geq (1-\e^{1/2})|V_3|$ and 
consequently, 
$$
|V_3^1\cap V_3^0\cap N_{E_{13}}(x)|\geq \e^{1/4}|V_3|-2\e^{1/2}|V_3|>0,
$$
where the last inequality is because $\e<1/100$. From this, we conclude there exists some $z\in N_{E_{13}}(x)\cap V_3^1\cap V_3^0$.  Since $z\in V_3^1$, 
$$
|N_H(xz)|\geq (1-\e^{1/2})|V_2^1|\geq (1-\e^{1/2})\e^{1/4}|V_2|>\e|V_2|.
$$
On the other hand, since $z\in V_3^0$, 
$$
|V_2\setminus N_H(xz)|\geq (1-\e^{1/4})|V_2^0|\geq (1-\e^{1/4})\e^{1/4}|V_2|>\e|V_2|.
$$
But this contradicts that $xz\in E_{13}$.   This finishes the proof of Claim \ref{cl:sym1}.
\end{proof}

As a consequence of Claim \ref{cl:sym1}, we have that for every $x\in V_1'$, either $|N_{E_{12}^0}(x)|\leq \e^{1/4}|V_2|$, or $|N_{E_{12}^1}(x)|\leq \e^{1/4}|V_2|$.  Given $x\in V_1'$, if $|N_{E_{12}^0}(x)|\leq \e^{1/4}|V_2|$, then
\begin{align*}
|N_H(x)\cap K_2[V_3,V_3]|\geq \sum_{y\in N_{E_{12}^1}(x)}|N_H(xy)|\geq |N_{E_{12}^1}(x)|(1-\e)|V_3|&\geq  (1-\e^{1/4})(1-\e)|V_2||V_3|\\
&>(1-2\e^{1/4})|V_2||V_3|.
\end{align*}
If $x\in V_1'$ and $|N_{E_{12}^1}(x)|\leq \e^{1/4}|V_2|$, then a similar argument shows 
$$
|K_2[V_2,V_3]\setminus N_H(x)|>(1-2\e^{1/4})|V_2||V_3|.
$$
We now have that $|V_1'|\geq (1-2\e^{1/2})$, and for all $x\in V_1'$, either $|N_{\Gamma}(x)|>(1-2\e^{1/4})|V_2||V_3|$ or $|N_{\Gamma}(x)|<2\e^{1/4}|V_2||V_3|$. In other words, $V_1$ is almost $2\e^{1/4}$-good in $\Gamma$.  We can thus conclude, by Lemma \ref{lem:twosticks}, that $d_{\Gamma}(V_1,K_2[V_2,V_3])\in [0,4\e^{1/16})\cup (1-4\e^{1/16},1]$.  This finishes the proof.
\end{proof}

 \subsection{VC-dimension and Homogeneous Partitions of Graphs}\label{ss:vc}
 
 In this section we discuss VC-dimension and homogeneous partitions for graphs.  We begin with the definition of VC-dimension of a graph. 
 
\begin{definition}\label{def:vc}
The VC-dimension of a graph $G=(V,E)$, denote $\VC(G)$, is the largest $k$ so that there exist vertices $\{a_i: i\in [k]\}\cup \{b_S:S\subseteq [k]\}\subseteq V$, so that for all $i\in [k]$ and $S\subseteq[k]$, $a_ib_S\in E$ if and only if $i\in S$.
\end{definition}

We now define homogeneous partitions for graphs.
 
\begin{definition}\label{def:homgraph}
Suppose $G=(V,E)$ is an graph. 
\begin{enumerate}
\item For subsets $X,Y\subseteq V$, the pair $(X,Y)$ is \emph{$\e$-homogeneous with respect to $G$} if $d_G(X,Y)\in [0,\e)\cup (1-\e,1]$.
\item We say a partition $\calP$ of $V$ is \emph{$\e$-homogeneous with respect to $G$} if for at least $(1-\e)|V|^2$ of the tuples $(x,y)\in V^2$, there is some $\e$-homogeneous pair $(X,Y)\in \calP^2$ so that $(x,y)\in X\times Y$. 
\end{enumerate}
\end{definition}

As mentioned in the introduction, work of Alon-Fischer-Newman, Lov\'{a}sz-Szegedy, and Fox-Pach-Suk have shown that graphs of small VC-dimension admit homogeneous partitions with efficient bounds (see Theorem \ref{thm:foxpachsuk} in the introduction).  We will require a slightly more complicated version of Theorem \ref{thm:foxpachsuk}, which allows us to find homogeneous \emph{equipartitions} of bipartite graphs which also refine a distinguished bipartition.  The proof follows from Theorem \ref{thm:foxpachsuk} via standard arguments (which we include in the appendix for completeness). 

\begin{theorem}\label{thm:bipfoxpachsuk}
For all $k\geq 1$ there exist constants $C=C(k)$ and $D=2k+2$  so that for all $\e\in (0,1/4)$ the following holds.  Suppose $H=(A\cup B, E)$ is a bipartite graph with VC-dimension less than $k$ and $|A|=|B|$.  Then there exists an equipartition $\calP$ of $A\cup B$ refining $\{A,B\}$ which is $\e$-homogeneous for $H$ and so that $\e^{-1}\leq |\calP|\leq C\e^{-D}$.
\end{theorem}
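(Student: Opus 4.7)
The plan is to derive Theorem~\ref{thm:bipfoxpachsuk} from Theorem~\ref{thm:foxpachsuk} by applying the latter to $H$ viewed as a graph on $V=A\cup B$, and then carrying out two standard refinement steps: first refining by $\{A,B\}$, and then further refining into an equipartition respecting $\{A,B\}$.

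First, apply Theorem~\ref{thm:foxpachsuk} to $H$ with a parameter $\delta$ polynomially small in $\e$ to obtain an equipartition $\calQ_0$ of $V$ of size $t_0$ with $8\delta^{-1}\leq t_0\leq c\delta^{-(2k+1)}$ that is $\delta$-homogeneous for $H$. The lower bound on $t_0$ will later yield the requirement $|\calP|\geq\e^{-1}$. Second, refine $\calQ_0$ by setting $\calQ_1=\{Q\cap A,\, Q\cap B : Q\in\calQ_0\}\setminus\{\emptyset\}$. This partition refines $\{A,B\}$ and has $|\calQ_1|\leq 2t_0$. The crucial point is that because $H$ is bipartite, any sub-pair $(X,Y)\in\calQ_1^2$ with $X,Y$ both $\subseteq A$ or both $\subseteq B$ has density $0$ in $H$ and is automatically $\e$-homogeneous; only the cross sub-pairs $(Q_1\cap A,\, Q_2\cap B)$ need to be analyzed.

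For each $\delta$-homogeneous pair $(Q_1,Q_2)\in\calQ_0^2$, I would split into cases: if $d_H(Q_1,Q_2)<\delta$, then any bad cross sub-pair satisfies $|Q_1\cap A||Q_2\cap B|\leq(\delta/\e)|Q_1||Q_2|$, contributing at most $O(\delta/\e)|V|^2$ bad tuples once summed; if $d_H(Q_1,Q_2)>1-\delta$, a short calculation with the bipartite density formula shows $(Q_1,Q_2)$ must be \emph{polarized} (one part mostly in $A$, the other mostly in $B$), so the dominant cross sub-pair inherits density $\geq 1-2\delta$ while the other has size $O(\delta|Q_1||Q_2|)$, giving only $O(\delta)|V|^2$ bad tuples in total. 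Choosing $\delta$ a suitable polynomial in $\e$ makes $\calQ_1$ an $(\e/2)$-homogeneous partition refining $\{A,B\}$.

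Third, refine $\calQ_1$ into an equipartition by fixing a target sub-part size $s=\lfloor|V|/T\rfloor$ with $T=\lceil|\calQ_1|/\e\rceil$, subdividing each $P\in\calQ_1$ into $\lfloor|P|/s\rfloor$ equal sub-parts of size $s$, and collecting the $A$- and $B$-leftovers into separate exceptional piles which are themselves partitioned into size-$s$ pieces (preserving the refinement of $\{A,B\}$). The total exceptional mass is at most $|\calQ_1|\cdot s\leq\e|V|$, so tuples touching exceptional sub-parts contribute at most $2\e|V|^2$ further bad tuples. After rescaling $\e$ at the start by an absolute constant, this yields an $\e$-homogeneous equipartition $\calP$ refining $\{A,B\}$ with $|\calP|\leq T+O(1)\leq C\e^{-D}$ where $D$ is linear in $k$, and $|\calP|\geq t_0\geq 8\delta^{-1}\geq\e^{-1}$.

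The main obstacle is the Step-$2$ analysis: one must carefully exploit both the bipartite structure (automatic $0$-homogeneity of within-side sub-pairs) and the polarization of density-$1$ pairs to keep $\delta$ only polynomially smaller than $\e$, so that $t_0$, and hence $|\calP|$, remains polynomial in $\e^{-1}$ with the claimed exponent $D=2k+2$.
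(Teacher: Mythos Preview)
Your approach is correct and yields a valid polynomial bound, but it takes a more complicated route than the paper's proof. After applying Theorem~\ref{thm:foxpachsuk} to obtain an $\e$-homogeneous equipartition $\calP$ of $V(H)$, the paper does not carry out any bipartite case analysis. Instead it immediately intersects each part with $A$ and $B$, chops every non-tiny resulting piece into blocks of a common size $\lceil\e|A|/t\rceil$, and collects the leftovers into exceptional sets $A_0,B_0$; this produces a refinement $\calP'$ of $\calP$ in one step. Homogeneity then passes from $\calP$ to $\calP'$ by a single application of the averaging lemma (Lemma~\ref{lem:averaging}): for any $\e$-homogeneous pair $(X,Y)\in\calP^2$, the sub-rectangles $(X',Y')$ from $\calP'$ that fail to be $\sqrt{\e}$-homogeneous can cover at most a $\sqrt{\e}$-fraction of $X\times Y$. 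No use is made of the fact that within-side sub-pairs have density $0$, and no polarization argument is needed for density-$1$ pairs. What you flag as ``the main obstacle'' therefore never arises in the paper's route; the averaging lemma handles both density cases uniformly and works for any refinement, not just the one induced by $\{A,B\}$. Finally, the leftover sets $A_0,B_0$ (which have equal size because $|A|=|B|$) are redistributed evenly to make the equipartition. One small remark on exponents: your Step~2 bound $O(\delta/\e)$ forces $\delta\lesssim\e^2$, so your final exponent is closer to $4k+3$ than to $2k+2$; the paper's proof as written also only yields a $2\sqrt{\e}$-homogeneous partition with exponent $2k+2$, so both arguments achieve the stated $D$ only after an implicit rescaling of $\e$.
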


 \subsection{Slicewise VC-dimension and Homogeneous Partitions of $3$-Graphs}\label{ss:dec}
 
We begin with the definition of a ``slice graph" associated to a $3$-graph.
  
 \begin{definition}\label{def:slicegraph}
Suppose $H$ is a $3$-graph.  For each $x\in V(H)$, the \emph{slice graph of $H$ at $x$}, denoted $H_x$, is the graph with vertex set $V(H_x)=V(H)$ and edge set 
$$
E(H_x)=\{yz\in {V(H)\choose 2}: xyz\in E(H)\}.
$$
\end{definition} 

We now define the slicewise VC-dimension of a $3$-graph to be the maximum VC-dimension of its associated slice graphs.

\begin{definition}
Suppose $H=(V,E)$ is a $3$-graph.  The \emph{slicewise VC-dimension of $H$} is
$$
\SVC(H):=\max\{VC(H_x): x\in V\}.
$$

\end{definition}

We now define homogeneous partitions for $3$-graphs in analogy to Definition \ref{def:homgraph}.
 
 \begin{definition}\label{def:hom}
Suppose $G=(V,E)$ is an $3$-graph.
\begin{enumerate}
\item Given subsets $X,Y,Z\subseteq V$, we say the triple $(X,Y,Z)$ is \emph{$\e$-homogeneous} if $d_G(X,Y,Z)\in [0,\e)\cup (1-\e,1]$.
\item We say a partition $\calP$ of $V$ is \emph{$\e$-homogeneous with respect to $G$} if for at least $(1-\e)|V|^3$ many $(x,y,z)\in V^3$, there is an $\e$-homogeneous triple $(X,Y,Z)\in \calP^3$ so that $(x,y,z)\in X\times Y\times Z$. 
\end{enumerate}
\end{definition}

It is known that $3$-graphs of small slicewise VC-dimension admit homogeneous partitions (see Theorem \ref{thm:slvcwow} in the introduction).  The goal of this paper is to prove they admit homogeneous partitions with a relatively small number of parts.  A crucial tool in this proof will be the notion of an almost $\e$-good partition, which we now define (recall Definition \ref{def:almostgood} as well).

\begin{definition}\label{def:goodpart}
Suppose $H=(V_1\cup V_2\cup V_3,E)$ is a $3$-partite $3$-graph and $\{i_1,i_2,i_3\}=\{1,2,3\}$.  A partition $\calP$ of $V_{i_1}$ is \emph{almost $\e$-good with respect to $H$} if there is a set $\calP'\subseteq \calP$ so that $|\bigcup_{X\in \calP'}X|\geq (1-\e)|V_{i_1}|$ and every element in $\calP$ is almost $\e$-good with respect to $H$.  
\end{definition}

We now extend Definition \ref{def:goodpart} to partitions of the whole vertex set of a tripartite $3$-graph.

\begin{definition}
We say a partition $\calP$ of $V(H)$ refining $\{V_1,V_2,V_3\}$ is \emph{almost $\e$-good with respect to $H$} if $\calP|_{V_1}$, $\calP|_{V_2}$ and $\calP|_{V_3}$ are almost $\e$-good with respect to $H$.
\end{definition}

We now show that an almost $\e$-good partition of a tripartite $3$-graph is automatically homogeneous.  This relies crucially on our symmetry lemma, Lemma \ref{lem:symmetry}.

\begin{proposition}\label{prop:goodhom}
Suppose $H=(V_1\cup V_2\cup V_3,E)$ is a tripartite $3$-graph, and $\calP$ is a partition of $V(H)$ refining $\{V_1,V_2,V_3\}$ which is an almost $\e$-good partition with respect to $H$.  Then $\calP$ is $28\e^{1/64}$-homogeneous with respect to $H$.
\end{proposition}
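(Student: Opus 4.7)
The plan is to combine the averaging lemma (Lemma \ref{lem:averaging}) with the Symmetry Lemma for $3$-graphs (Lemma \ref{lem:symmetry}), applying the latter separately to each restricted $3$-graph $H[X \cup Y \cup Z]$ arising from a ``transversal'' triple of parts from $\calP$.

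First observe that, since $H$ is tripartite, any triple $(X, Y, Z) \in \calP^3$ two of whose parts lie inside the same $V_i$ has $d_H(X, Y, Z) = 0$, and so is trivially homogeneous. Thus it suffices to bound the bad mass of tuples coming from transversal triples, i.e.\ those with $X \subseteq V_{\sigma(1)}$, $Y \subseteq V_{\sigma(2)}$, $Z \subseteq V_{\sigma(3)}$ for some $\sigma \in S_3$; by symmetry I will concentrate on $\sigma = \mathrm{id}$ and multiply by $6$ at the end. Write $\calP_i = \calP|_{V_i}$ and let $\calP_i' \subseteq \calP_i$ be the almost $\e$-good subcollection granted by the hypothesis.

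For each $X \in \calP_1'$ set $B_X = \{(y, z) \in V_2 \times V_3 : |N_H(yz) \cap X|/|X| \in [\e, 1-\e]\}$, so $|B_X| \leq \e |V_2||V_3|$. By Lemma \ref{lem:averaging} (with $a = b = \e^{1/2}$) applied to $V_2 \times V_3 \setminus B_X$ and the partition $\calP_2 \times \calP_3$ of $V_2 \times V_3$, the pairs $(Y, Z) \in \calP_2 \times \calP_3$ for which $|B_X \cap (Y \times Z)| \geq \e^{1/2} |Y||Z|$ cover at most $\e^{1/2} |V_2||V_3|$. I will then perform the analogous computation with $Y \in \calP_2'$ and $Z \in \calP_3'$ in the ``missing vertex'' roles. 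Call a triple $(X, Y, Z) \in \calP_1' \times \calP_2' \times \calP_3'$ \emph{good} if all three of the resulting $B$-intersection fractions are below the $\e^{1/2}$ threshold.

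For a good triple, each of $X, Y, Z$ is almost $\e^{1/2}$-good with respect to the restricted $3$-partite $3$-graph $H[X \cup Y \cup Z]$, using the inclusion $[0, \e) \cup (1-\e, 1] \subseteq [0, \e^{1/2}) \cup (1 - \e^{1/2}, 1]$. Provided $\e^{1/2} < 1/100$, Lemma \ref{lem:symmetry} then yields $d_H(X, Y, Z) \in [0, 4\e^{1/32}) \cup (1 - 4\e^{1/32}, 1]$, which sits comfortably inside the $28\e^{1/64}$-homogeneity window. It remains to bound the bad tuples in $V_1 \times V_2 \times V_3$: at most $3\e|V_1||V_2||V_3|$ come from triples with some part outside its $\calP_i'$, and at most $3\e^{1/2}|V_1||V_2||V_3|$ from triples with some oversized $B$-intersection, yielding $\leq 6\e^{1/2}|V_1||V_2||V_3|$ in total. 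Summing over the six orderings of $(V_1, V_2, V_3)$ and using the AM--GM bound $|V_1||V_2||V_3| \leq |V(H)|^3/27$, the bad fraction in $V(H)^3$ is at most $\tfrac{4}{3}\e^{1/2}$, well below $28\e^{1/64}$. The conceptual content is just the reduction to the restricted hypergraph followed by one invocation of Lemma \ref{lem:symmetry}; the only real obstacle is bookkeeping these quantitative losses cleanly.
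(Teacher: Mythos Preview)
Your proof is correct and follows essentially the same approach as the paper's: use averaging (Lemma~\ref{lem:averaging}) to show that most transversal triples $(X,Y,Z)$ of parts have all three sides almost-good in the restricted hypergraph $H[X\cup Y\cup Z]$, then apply the Symmetry Lemma (Lemma~\ref{lem:symmetry}). The only cosmetic difference is that the paper first intersects the three ``good triple'' sets $\Gamma_1\cap\Gamma_2\cap\Gamma_3$ and then averages once over $\calP_1\times\calP_2\times\calP_3$, whereas you average three times (once per vertex role) and then intersect; your explicit handling of the non-transversal triples and the AM--GM step at the end are routine bookkeeping that the paper leaves implicit.
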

\begin{proof}
To ease notation, let $\calP_1=\calP|_{V_1}$, $\calP_2=\calP|_{V_2}$, and $\calP_3=\calP|_{V_3}$.  Define
\begin{align*}
\Gamma_{1}&=\bigcup_{Y\in \calP_1}\{xyz\in K_2[V_1,V_2,V_3]: |N_H(yz)\cap Y|/|Y|\in [0,\e)\cup (1-\e,1]\},\\
\Gamma_{2}&=\bigcup_{Y\in \calP_2}\{xyz\in K_2[V_1,V_2,V_3]: |N_H(xz)\cap Y|/|Y|\in [0,\e)\cup (1-\e,1]\},\text{ and }\\
\Gamma_{3}&=\bigcup_{Y\in \calP_3}\{xyz\in K_2[V_1,V_2,V_3]: |N_H(xy)\cap Y|/|Y|\in [0,\e)\cup (1-\e,1]\}.
\end{align*}
By assumption, each of $\Gamma_1$, $\Gamma_2$, and $\Gamma_3$ have size at least $(1-\e)|V_1||V_2||V_3|$. Thus if we set 
$$
\Gamma:=\Gamma_1\cap \Gamma_2\cap \Gamma_3,
$$
then $|\Gamma|\geq (1-3\e)|V_1||V_2||V_3|$. Let 
$$
\Sigma=\{(X,Y,Z)\in \calP_1\times \calP_2\times \calP_3: |K_3[X,Y,Z]\cap \Gamma|\geq (1-\sqrt{3\e})|V_1||V_2||V_3|\}.
$$
  By the lower bound above for $|\Gamma|$ and Lemma \ref{lem:averaging}, 
  $$
  |\bigcup_{(X,Y,Z)\in \Sigma}K_3[X,Y,Z]|\geq (1-\sqrt{3\e})|V_1||V_2||V_3|.
  $$ 
Thus it suffices to show that each $(X,Y,Z)\in \Sigma$ is $28\e^{1/64}$-homogeneous with respect to $H$.  To this end, fix a triple $(X,Y,Z)\in \Sigma$.  Let 
$$
E_{XY}=\{xy\in K_2[X,Y]: |N_{\Gamma}(xy)\cap Z|\geq (1-(3\e)^{1/4})|Z|\}.
$$
Since $(X,Y,Z)\in \Sigma$, $|E_{XY}|\geq (1-(3\e)^{1/4})|X||Y|$.  For all $xy\in E_{XY}$, since $N_{\Gamma}(xy)\cap Z\neq \emptyset$, we have by definition of $\Gamma$ that $|N_H(xy)\cap Z|/|Z|\in [0,\e)\cup (1-\e,1]$.  Combining with the lower bound on $E_{XY}$, we have shown that $Z$ is almost $3\e^{1/4}$-good with respect to $H[X,Y,Z]$. A symmetric argument shows both $X$ and $Y$ are also almost $(3\e)^{1/4}$-good with respect to $H[X,Y,Z]$.  By Lemma \ref{lem:symmetry}, $(X,Y,Z)$ is $28\e^{1/64}$-homogeneous with respect to $H$, as desired.
\end{proof}

\subsection{Tripartite Decompositions}

In this section we define tripartite decompositions. These will play an important role in the proof of Theorem \ref{thm:slvc}.  They are essentially tripartite versions of more general  decompositions typically used in a strong form of regularity for $3$-graphs.  Our notation is largely adapted from \cite{Frankl.2002}.

We begin by introducing the notion of a triad, which for us will be simply any tripartite graph.
\begin{definition}
A \emph{triad} is a tripartite graph.
\end{definition}

Given a triad $G=(X\cup Y\cup Z, E)$, we let $K_3^{(2)}(G)$ denotes the set of triangles from $G$, i.e. we define 
$$
K_3^{(2)}(G)=\{xyz\in K_3[X,Y,Z]: xy, yz, xz\in E\}.
$$
The notion of a density of a triad, relative to a $3$-graph, is crucial for our proofs.

\begin{definition}
Suppose $H$ is a $3$-graph, $X,Y,Z$ are disjoint subsets of $V(H)$, and $G=(X\cup Y\cup Z, E)$ is a triad. Define
$$
d_H(G)=\frac{|E(H)\cap K_3^{(2)}(G)|}{| K_3^{(2)}(G)|}.
$$
\end{definition}

We now define homogeneous triads of a $3$-graph relative to a triad.

\begin{definition}
Suppose $H$ is a $3$-graph, $X,Y,Z$ are disjoint subsets of $V(H)$, and $G=(X\cup Y\cup Z, E)$ is a triad.  We say $G$ is \emph{$\e$-homogeneous with respect to $H$} if 
$$
d_H(G)\in [0,\e)\cup (1-\e,1].
$$
\end{definition}

We now define the notion of a tripartite decomposition.

\begin{definition}\label{def:tridec}
Suppose $V=A\sqcup B\sqcup C$.  A \emph{tripartite $(A,B,C)$-decomposition of $V$} is a tuple $\calP=(\calP_1,\calP_2)$ satisfying the following. 
\begin{itemize}
\item $\calP_1=(\calP_A,\calP_B,\calP_C)$, where $\calP_A,\calP_B,\calP_C$ are partitions of of $A$, $B$, and $C$ respectively,
\item $\calP_2=\{\calP_{XY}: (X,Y)\in (\calP_A\times \calP_B)\cup (\calP_A\times \calP_C)\cup (\calP_B\times \calP_C)\}$, where for each $(X,Y)\in (\calP_A\times \calP_B)\cup (\calP_A\times \calP_C)\cup (\calP_B\times \calP_C)$, $\calP_{XY}$ is a partition of $K_2[X,Y]$.
\end{itemize}
\end{definition}
In Definition \ref{def:tridec}, we will sometimes all $\calP$ simply a \emph{tripartite decomposition of $V$} when the relevant partition $A\sqcup B\sqcup C$ is clear from context.  Tripartite decompositions come equipped with an  associated collection of triads, which we now define.

\begin{definition}\label{def:tripdecomp}
Suppose $V=A\sqcup B\sqcup C$ and $\calP$ is a tripartite $(A,B,C)$-decomposition of a $V$.  
\begin{itemize}
\item A \emph{triad of $\calP$} is a tripartite graph of the form 
$$
(X\cup Y\cup Z, P\cup Q\cup R),
$$
where $(X,Y,Z)\in \calP_A\times \calP_B\times \calP_C$, and $P\in \calP_{XY}$, $Q\in \calP_{XZ}$, and $R\in \calP_{YZ}$.
\item $Tri(\calP)$ denotes the set of triads of $\calP$.
\end{itemize}
\end{definition}

Using the notation of Definition \ref{def:tripdecomp}, we see that a tripartite decomposition $\calP$ induces a partition 
$$
K_3[A,B,C]=\bigcup_{G\in Tri(\calP)}K_3^{(2)}(G).
$$
  We now define homogeneous tripartite decompositions.  

\begin{definition}\label{def:triphom}
Suppose $V=A\sqcup B\sqcup C$, $H=(A\cup B\cup C, E)$ is a tripartite $3$-graph, and $\calP$ is a tripartite $(A,B,C)$-decomposition of a $V$. We say $\calP$ is \emph{$\e$-homogeneous with respect to $H$} if there is a set $\Sigma\subseteq Tri(\calP)$ so that every $G\in \Sigma$ is $\e$-homogeneous with respect to $H$, and
$$
\sum_{G\in \Sigma}|K_3^{(2)}(G)|\geq (1-\e)|A||B||C|.
$$
\end{definition}

We note that there are major differences between a homogeneous tripartite decomposition in the sense of Definition \ref{def:triphom} and a homogeneous partition in the sense of Definition \ref{def:hom}. In particular, Definition \ref{def:hom} deals only with triples of sets from a vertex partition, while Definition \ref{def:triphom} deals with tripartite graphs built from partitions of both vertices and pairs of vertices.  

\section{A structure theorem for edge-colored bipartite graphs}\label{ss:ecg}

This section contains a required result about bipartite edge-colored graphs.  As motivation, we begin by discussing one way in which these objects will show up in our proof of Theorem \ref{thm:slvc}.  

At some point in the proof of Theorem \ref{thm:slvccor}, we will apply the Theorem \ref{thm:bipfoxpachsuk} to a bipartite graph $G=(A\cup B,E)$ with VC-dimension less than $k$. This will yield an $\e$-homogeneous equipartition of the form $A_1\cup \ldots \cup A_t\cup B_1\cup \ldots \cup B_t$.  Such a decomposition naturally gives rise to a ``reduced graph" with vertex set $\{A_1,\ldots, A_t,B_1,\ldots, B_t\}$, and with three types of edges: the pairs $A_iB_j$ where $d_G(A_i,B_j)\geq 1-\e$, the pairs $A_iB_j$ where $d_G(A_i,B_j)\leq \e$, and the pairs $A_iB_j$ which are not $\e$-homogeneous with respect to $G$.   Making this precise is part of the motivation for the following definition. 

\begin{definition}\label{def:ecg}
 A \emph{bipartite edge-colored graph} is a tuple $G=(A\cup B,E_0,E_1,\ldots, E_r)$, where $r\geq 1$ and $K_2[A,B]=E_0\sqcup \ldots\sqcup E_r$.  We refer to elements as $A\cup B$ as \emph{vertices}, and the sets $E_0,\ldots, E_r$ as \emph{edge colors}.  
 \end{definition}
 Given a bipartite edge-colored graph $G=(A\cup B,E_0,E_1,\ldots, E_r)$, we will use similar notation as that laid out for graphs in Section \ref{ss:notation}.  In particular, for $0\leq u\leq r$ and $x\in A\cup B$, the \emph{$E_u$-neighborhood of $x$} is the set
$$
N_{E_u}(x):=\{y\in A\cup B: xy\in E_u\}.
$$  
Given subsets $X\subseteq A$ and $Y\subseteq B$, we let $E_u[X,Y]=E_u\cap K_2[X,Y]$.  We then let $G[X, Y]$ be the edge-colored graph with vertex set $X\cup Y$, and edge colors $E_u[X,Y]$ for $0\leq u\leq r$.

As discussed above, one way bipartite edge-colored graphs will arise in our proof is as reduced graphs associated to homogeneous partitions of bipartite graphs of small VC-dimension.  In this case, there will be $3$ edge colors:  $E_0$ (the ``sparse" color), $E_1$ (the ``dense" color), and $E_2$ (the ``error" color).  In this context, the resulting edge-colored graph will omit certain special bipartite configurations.  The bipartite configuration we are interested in is the following.

\begin{definition}
Given $k\geq 1$, define $U(k)$ to be the bipartite graph 
$$
(\{a_i: i\in [k]\}\cup \{b_S:S\subseteq [k]\},\{a_ib_S: i\in S\}).
$$
\end{definition}

We now give a definition of what it means to find a copy of $U(k)$ inside an the kinds of edge-colored graphs we are interested in.

\begin{definition}\label{def:copies}
Let $G=(A\cup B,E_0,\ldots, E_r)$ be a bipartite edge-colored graph.  Given $0\leq i\neq j\leq r$, an \emph{$E_u/E_v$-copy of $U(k)$ with right side in $B$} consists of vertices $\{x_1,\ldots, x_k\}\subseteq A$ and $\{y_S\subseteq [k]\}\subseteq B$ so that for each $1\leq i\leq k$ and $S\subseteq [k]$, if $i\in S$ then $x_iy_S\in E_u$ and if $i\notin S$, then $x_iy_S\in E_v$.

A \emph{$E_u/E_v$-copy of $U(k)$ in $G$} is a copy of $U(k)$ in $G$ with right side in $A$ or with right side in $B$. 
\end{definition}

If $G=(A\cup B,E_0,E_1,E_2)$ is a bipartite edge-colored graph containing no $E_0/E_1$-copies of $U(k)$ and with $E_2$ is small in size, then one can prove a structure theorem for $G$ (such hypotheses will hold when $G$ arises in the way discussed above).  Our next goal is to state such a structure theorem. For this we will need the following definition.

\begin{definition}\label{def:echom}
Let $\e>0$ and suppose $G=(A\cup B, E_0,E_1,E_2)$ is a bipartite edge-colored graph.  Suppose $\calP=(\calP_A,\calP_B)$ where $\calP_A$ is a partition of $A$ and $\calP_B$ is a partition of $B$.  We say $\calP$ is \emph{$\e$-homogeneous for $G$ with respect to $\{E_0,E_1\}$} if there are sets $\Sigma_0,\Sigma_1\subseteq \calP_A\times \calP_B$ so that the following hold.
\begin{enumerate}
\item $|\bigcup_{(X,Y)\in \Sigma_0\cup \Sigma_1}K_2[X,Y]|\geq (1-\e)|A||B|$,
\item For each $\alpha\in \{0,1\}$ and $(X,Y)\in \Sigma_{\alpha}$, $|E_{\alpha}\cap K_2[X,Y]|\geq (1-\e)|X||Y|$.
\end{enumerate}
The \emph{size of $\calP$} is $\max\{|\calP_A|,|\calP_B|\}$.
\end{definition}

We now state the structure theorem for edge-colored graphs which will need in our proof of Theorem \ref{thm:slvccor}.

\begin{proposition}\label{prop:vcremoval}
For all $k\geq 1$, there is $c=c(k)$ so that the following holds. Suppose $0<\e,\delta<1/2$ satisfy $\e\leq c^{-4}(\delta/8)^{4k+4}/8$.    Suppose $G=(A\cup B, E_0,E_1,E_2)$ is a bipartite edge colored graph where $|E_2|\leq \e |A||B|$, and such that there is no $E_0/E_1$-copy of $U(k)$ in $G$. 

Then there is are partitions $\calP_A$ of $A$ and $\calP_B$ of $B$ so that $\calP=(\calP_A,\calP_B)$ is $2\delta^{1/16}$-homogeneous for $G$ with respect to $\{E_0,E_1\}$ and so that $\calP$ has size at most $2c(\delta/8)^{-k}$.
\end{proposition}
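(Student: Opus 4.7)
The plan is to adapt the proof of the Fox--Pach--Suk bipartite regularity theorem (Theorem \ref{thm:bipfoxpachsuk}) to the edge-colored setting, using the hypothesis that there is no $E_0/E_1$-copy of $U(k)$ as a substitute for the usual VC-dimension bound and absorbing the $E_2$-error via Lemma \ref{lem:averaging}. First I would pass to the low-$E_2$-degree vertex sets $A'=\{a\in A:|N_{E_2}(a)|\leq \e^{1/2}|B|\}$ and $B'=\{b\in B:|N_{E_2}(b)|\leq \e^{1/2}|A|\}$; by Markov applied to $|E_2|\leq \e|A||B|$, these sets cover all but an $\e^{1/2}$-fraction of $A$ and $B$ respectively.

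The goal is then to apply the sampling/partitioning machinery underlying Theorem \ref{thm:bipfoxpachsuk} to the bipartite graph $G_1:=(A'\cup B', E_1\cap K_2[A',B'])$. The key substitute for a VC-dimension bound is the following restricted Sauer--Shelah observation: for any $k$-subset $Y\subseteq B'$, the set of clean $E_1$-traces $\{N_{E_1}(a)\cap Y : a\in A',\ N_{E_2}(a)\cap Y=\emptyset\}$ is a proper subset of $2^Y$, since otherwise choosing a realizer $a_T$ for each pattern $T\subseteq [k]$ produces a forbidden $E_0/E_1$-copy of $U(k)$ with right side in $A$. Consequently, for any larger sample $X\subseteq B'$, the clean trace system $\calF^{\mathrm{clean}}|_X:=\{N_{E_1}(a)\cap X : a\in A',\ N_{E_2}(a)\cap X=\emptyset\}$ (as a set system on $X$) shatters no $k$-subset $Y\subseteq X$ either, so by Sauer--Shelah it has size at most $\sum_{i=0}^{k-1}\binom{|X|}{i}=O(|X|^{k-1})$. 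Running the Fox--Pach--Suk $\e$-approximation argument with this restricted shatter-function bound, and absorbing the few ``unclean'' vertices (those with $E_2$-edges to the random sample, of which there are negligibly few thanks to $\e\leq c^{-4}(\delta/8)^{4k+4}/8$) into an auxiliary part, yields partitions $\calQ_{A'}$ and $\calQ_{B'}$ (the latter built via the dual no-$U(k)$ condition with right side in $B$) such that $\calQ=(\calQ_{A'},\calQ_{B'})$ is $(\delta/8)$-homogeneous for $G_1$ and has size at most $c(\delta/8)^{-k}$, where $c=c(k)$ matches the constant in Theorem \ref{thm:bipfoxpachsuk}. I would extend $\calQ$ to a partition $\calP=(\calP_A,\calP_B)$ of the full $A\cup B$ by adjoining $A\setminus A'$ and $B\setminus B'$ as extra parts, at most doubling the size to $2c(\delta/8)^{-k}$.

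Finally I would verify $\calP$ is $2\delta^{1/16}$-homogeneous for $G$ with respect to $\{E_0,E_1\}$. By Lemma \ref{lem:averaging} applied to $|E_2|\leq \e|A||B|\ll \delta^{1/4}|A||B|$, all but a $\delta^{1/8}$-fraction of the pairs $(X,Y)\in \calQ_{A'}\times \calQ_{B'}$ satisfy $|E_2\cap K_2[X,Y]|\leq \delta^{1/8}|X||Y|$. On any such pair that is also $(\delta/8)$-homogeneous for $G_1$, a case analysis on the $E_1$-density finishes the job: either $|E_1\cap K_2[X,Y]|\geq (1-\delta/8)|X||Y|$, in which case the pair is $E_1$-dense, or $|E_1\cap K_2[X,Y]|\leq (\delta/8)|X||Y|$, in which case $|E_0\cap K_2[X,Y]|\geq (1-2\delta^{1/8})|X||Y|$ so the pair is $E_0$-dense. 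A second application of Lemma \ref{lem:averaging} (together with absorbing the pairs meeting $A\setminus A'$ or $B\setminus B'$, whose total area is at most $2\e^{1/2}|A||B|\ll \delta^{1/8}|A||B|$) produces the final homogeneity parameter $2\delta^{1/16}$.

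The main obstacle is keeping the restricted Sauer--Shelah argument quantitatively usable inside the Fox--Pach--Suk machinery: the no-$U(k)$ hypothesis only controls clean binary traces, so at every level of the sampling argument one must separately handle the unclean vertices whose traces lie outside the restricted shatter-function bound. Doing this absorption at every step is exactly what forces the strong quantitative hypothesis $\e\leq c^{-4}(\delta/8)^{4k+4}/8$ on the relative sizes of $\e$ and $\delta$.
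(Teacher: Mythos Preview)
Your overall strategy---prune high-$E_2$-degree vertices, adapt the Fox--Pach--Suk machinery using the no-$U(k)$ hypothesis in place of a VC bound, then absorb residual $E_2$ error by averaging---is in the same spirit as the paper's proof, and your final verification paragraph is essentially correct. The substantive difference is the mechanism by which the partition is produced. The paper does not go through Sauer--Shelah on a random sample; it invokes an edge-colored Haussler packing lemma (Lemma~\ref{lem:hausslercor}, quoted from \cite{Terry.2022}), applied once to the $A$-side and once to the restricted $B$-side. That lemma directly outputs at most $2c(\delta/8)^{-k}$ representative vertices $x_1,\dots,x_m\in A$ (and later $y_1,\dots,y_p\in B'$) of small $E_2$-degree, such that every non-exceptional vertex has both its $E_0$- and $E_1$-neighborhood within $\delta|B|$ of some representative's. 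The partitions $\calP_A,\calP_B$ are then the fibers of the closest-representative maps, and homogeneity is checked by an explicit comparison through the fixed representatives $x_i,y_j$ rather than by a second averaging pass.

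The gap in your proposal is quantitative. Counting clean $E_1$-traces on a sample $X\subseteq B'$ via Sauer--Shelah yields $O(|X|^{k-1})$ cells, but two vertices sharing a trace on $X$ need not have globally close neighborhoods unless $X$ is an $\e$-approximation for the trace system, which forces $|X|$ of order $\delta^{-2}$; the resulting partition then has on the order of $\delta^{-2(k-1)}$ parts, not the claimed $2c(\delta/8)^{-k}$. (Relatedly, Fox--Pach--Suk themselves argue via Haussler's packing lemma rather than trace-counting on a sample, so ``Fox--Pach--Suk $\e$-approximation argument'' slightly misidentifies the tool.) Replacing your sampling step by Lemma~\ref{lem:hausslercor} repairs this immediately and, as a bonus, handles the ``unclean vertex'' absorption in one stroke, since the lemma already restricts to representatives of low $E_2$-degree.
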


Proposition \ref{prop:vcremoval} can be deduced from results of Alon, Fischer, and Newman from  \cite{Alon.2007} (with slightly different bounds).  In particular, Proposition \ref{prop:vcremoval} follows from an asymmetric version of Lemma 1.6 of \cite{Alon.2007}, which the authors state also holds in an asymmetric setting via an identical proof.

For the sake of completeness, and because some amount of translation is required to move from \cite{Alon.2007} to Proposition \ref{prop:vcremoval}, we also include a proof in this paper. The proof uses an analogue of the packing lemma for edge colored graphs, adapted from an application in \cite{Fox.2017bfo}.  The usual Haussler packing lemma says that for any graph $G=(V,E)$ of small VC-dimension, there exist a small number of vertices $x_1,\ldots, x_s$ so that every $v\in V$ looks like one of the $x_i$, in the sense that $N(v)\Delta N(x_i)$ is small (for more on this, see \cite{Fox.2017bfo}).  In the setting of bipartite edge-colored graphs, something similar holds with respect to the ``non-error" edge colors.  A lemma to this effect was proved in \cite{Terry.2022}. 

\begin{lemma}[Lemma 2.14 in \cite{Terry.2022}]\label{lem:hausslercor}
For all $k\geq 1$ there is a constant $c=c(k)$ so that the following holds.  Suppose $\delta,\e>0$ satisfy $\e\leq c^{-2}(\delta/8)^{2k+2}$.    Assume $G=(A\cup B, E_0,E_1,E_2)$ is a bipartite edge-colored graph such that there is no $E_0/E_1$-copy of $U(k)$ in $G$, and such that $|E_2|\leq \e |A||B|$.  

Then there is an integer $m\leq 2c(\delta/8)^{-k}$,  a subset $U\subseteq A$ with $|U|\leq \sqrt{\e} |A|$, and  vertices $x_1,\ldots, x_m\in A\setminus U$ so that for all $a\in A\setminus U$, $|N_{E_2}(a)|\leq \sqrt{\e}|B|$ and for some $1\leq i\leq m$, $\max\{|N_{E_1}(a)\Delta N_{E_1}(x_i)|,|N_{E_0}(a)\Delta N_{E_0}(x_i)|\}\leq \delta |B|$. 
\end{lemma}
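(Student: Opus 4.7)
The plan is to adapt the classical Haussler packing argument to the edge-colored setting, treating $E_2$ as a controlled error. First, set $U := \{a \in A : |N_{E_2}(a)| > \sqrt{\e}|B|\}$. Since $\sum_{a \in A}|N_{E_2}(a)| = |E_2| \le \e|A||B|$, Markov's inequality gives $|U| \le \sqrt{\e}|A|$, and every $a \in A \setminus U$ automatically satisfies the required bound $|N_{E_2}(a)| \le \sqrt{\e}|B|$.

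Next, I would construct the template vertices $x_1, x_2, \ldots \in A\setminus U$ iteratively. Start with any $x_1 \in A \setminus U$; having chosen $x_1, \ldots, x_j$, if there exists $y \in A \setminus U$ with $\max\{|N_{E_0}(y) \Delta N_{E_0}(x_i)|, |N_{E_1}(y) \Delta N_{E_1}(x_i)|\} > \delta|B|$ for every $i \le j$, pick such a $y$ as $x_{j+1}$. When no such $y$ exists, the process terminates. By design, every $a \in A \setminus U$ is within $\delta|B|$ of some $x_i$ in both the $E_0$- and $E_1$-symmetric differences, so the lemma's conclusion reduces to bounding the number $m$ of templates by $2c(\delta/8)^{-k}$.

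For the packing bound, I would invoke Haussler's packing lemma for set systems of VC-dimension at most $k-1$, applied to $\mathcal{F} = \{N_{E_1}(x_i) : 1 \le i \le m\}$ on $B$. Two reductions are needed. First, since each $|N_{E_2}(x_i)| \le \sqrt{\e}|B|$, we have $\bigl||N_{E_1}(x_i) \Delta N_{E_1}(x_j)| - |N_{E_0}(x_i) \Delta N_{E_0}(x_j)|\bigr| \le 2\sqrt{\e}|B|$, so the greedy separation yields $|N_{E_1}(x_i) \Delta N_{E_1}(x_j)| > (\delta/2)|B|$ for all $i \neq j$, using that $\sqrt{\e} \ll \delta$ (guaranteed by the hypothesis $\e \le c^{-2}(\delta/8)^{2k+2}$). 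Second, a would-be shattering of some $\{b_1, \ldots, b_k\} \subseteq B$ by $\mathcal{F}$ via templates $x_{i_S}$ ($S\subseteq [k]$) yields a genuine $E_0/E_1$-copy of $U(k)$ once the $b_j$'s are relocated into the ``clean'' subset $B^\star := B \setminus \bigcup_S N_{E_2}(x_{i_S})$, which has density at least $1-2^k\sqrt{\e}$; this relocation is where the smallness of $\e$ is used crucially. Combining the two reductions with Sauer--Shelah then gives $m \le c(k)(\delta/2)^{-(k-1)} \le 2c(\delta/8)^{-k}$ for an appropriate $c=c(k)$.

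The main technical obstacle is the clean execution of the last step: a shattering of $\mathcal{F}$ is a priori supported anywhere in $B$, not inside $B^\star$, so the reduction to a $U(k)$-copy must be carried out probabilistically. Following Haussler's sampling proof, one picks a random $T\subseteq B$ of size $s = O(\delta^{-1}\log m)$ and uses a union bound so that (i) all traces $N_{E_1}(x_i)\cap T$ remain pairwise distinct (consequence of $(\delta/2)$-separation) and (ii) $T$ avoids all $E_2$-edges from the $2^k$ relevant templates in any potential shattering. The polynomial-in-$(\delta/k)$ smallness of $\e$ encoded in the hypothesis is exactly what makes this union bound viable, after which Sauer--Shelah closes out the argument and yields the stated packing bound.
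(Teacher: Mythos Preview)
The paper does not actually prove this lemma; it is quoted verbatim as Lemma~2.14 of \cite{Terry.2022} and used as a black box. So there is no ``paper's own proof'' to compare against, and I will evaluate your argument on its merits.

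Your overall strategy---discard the high-$E_2$ vertices via Markov, run a greedy $\delta$-packing on $A\setminus U$, then bound the packing size through Haussler/Sauer--Shelah---is exactly the right shape. The first reduction (that $|N_{E_1}(x_i)\Delta N_{E_1}(x_j)|$ and $|N_{E_0}(x_i)\Delta N_{E_0}(x_j)|$ differ by at most $2\sqrt{\e}|B|$, so $\max>\delta|B|$ forces both $>(\delta/2)|B|$) is correct and cleanly stated.

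The genuine gap is in your treatment of the VC step. The ``relocation'' of a shattered set $\{b_1,\dots,b_k\}$ into $B^\star=B\setminus\bigcup_S N_{E_2}(x_{i_S})$ does not work: shattering is not preserved under moving the points. You then try to rescue this probabilistically, but the proposed union bound is circular. You take $s=O(\delta^{-1}\log m)$ and require $T$ to avoid the $E_2$-neighborhoods of the ``relevant'' $2^k$ templates; but you do not know which templates are relevant before $T$ is chosen, so in effect you need $T$ to avoid $\bigcup_{i\le m}N_{E_2}(x_i)$, which costs $sm\sqrt{\e}$ in the union bound. Since $m$ is the very quantity you are trying to bound, the argument loops on itself, and the hypothesis $\e\le c^{-2}(\delta/8)^{2k+2}$ does not by itself break the loop.

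The standard fix is a truncation: assume for contradiction the greedy process produces more than $m':=\lfloor 2c(\delta/8)^{-k}\rfloor+1$ templates, and work only with the first $m'$ of them. Then $m'\sqrt{\e}\le(2c(\delta/8)^{-k}+1)\cdot c^{-1}(\delta/8)^{k+1}\le 3\delta/8$, so $B':=B\setminus\bigcup_{i\le m'}N_{E_2}(x_i)$ has $|B\setminus B'|\le(3\delta/8)|B|$. On $B'$ every edge from any $x_i$ is in $E_0\cup E_1$, so a shattered $k$-set in $B'$ would yield a genuine $E_0/E_1$-copy of $U(k)$; hence the traces $\{N_{E_1}(x_i)\cap B'\}_{i\le m'}$ have VC-dimension $<k$. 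They are also $(\delta/8)|B'|$-separated, so Haussler's packing lemma gives $m'\le c_H(\delta/8)^{-(k-1)}$ for a constant $c_H=c_H(k)$. Choosing $c\ge c_H$ forces $m'\le c_H(\delta/8)^{-(k-1)}<2c(\delta/8)^{-k}$, contradicting the definition of $m'$. This closes the argument cleanly and is presumably how the cited reference proceeds.
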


Using a proof similar to \cite{Fox.2017bfo}, we now use Lemma \ref{lem:hausslercor} to prove Proposition \ref{prop:vcremoval}.  

\vspace{2mm}

\noindent{\bf Proof of Proposition \ref{prop:vcremoval}.} By Lemma \ref{lem:hausslercor}, there is an integer $m\leq 2c(\delta/8)^{-k}$, a subset $U\subseteq A$ with $|U|\leq \sqrt{\e} |A|$, and vertices $x_1,\ldots, x_m\in A\setminus U$ so that for all $a\in A\setminus U$, $|N_{E_2}(a)|\leq \sqrt{\e}|B|$ and for some $1\leq i\leq m$, $\max\{|N_{E_1}(a)\Delta N_{E_1}(x_i)|,|N_{E_0}(a)\Delta N_{E_0}(x_i)|\}\leq \delta|B|$.  Let 
$$
B_{err}=\bigcup_{i=1}^mN_{E_2}(x_i). 
$$
Note $|B_{err}|\leq m\sqrt{\e}|B|<\delta |B|$, where the second inequality is due to the upper bound on $m$ and our assumption on $\e$.  To ease notation, let $B'=B\setminus B_{err}$ and $A'=A\setminus U$.

We now consider the restriction $G[A', B']=(A'\cup B', E_0[A',B],E_1[A',B'], E_2[A',B'])$.  Note that 
$$
|E_2[A',B']|\leq |E_2\cap K_2[A',B]|\leq \sum_{x\in A'}|N_{E_2}(a)|\leq \sqrt{\e}|B|\leq 2\sqrt{\e}|B'|,
$$
where the last inequality is because $|B'|\geq (1-\delta)|B|$, and $\delta<1/2$.  Clearly $G[A', B']$ also contains no $E_0[A',B']/E_1[A',B']$-copy of $U(k)$.  Therefore, we can apply Lemma \ref{lem:hausslercor} to $G[A', B']$ with parameters $k$, $2\sqrt{\e}$ and $\delta$.  This yields an integer $p\leq 2c(\delta/8)^{-k}$, a subset $V\subseteq B'$ with $|V|\leq 2\e^{1/4} |B'|$, and vertices $y_1,\ldots, y_p\in B'\setminus V$  so that for all $b\in B'\setminus V$, $|N_{E_2}(b)|\leq 2\e^{1/4}|A'|$ and for some $1\leq i\leq p$,
$$
\max\{|(N_{E_1}(b)\Delta N_{E_1}(y_i))\cap A'|,|(N_{E_0}(b)\Delta N_{E_0}(y_i))\cap A'|\}|\leq \delta|A'|.
$$ 
We now define partitions of $A$ and $B$.  First, for each $b\in B'\setminus V$, let 
$$
g(b)=\min\{j\in [p]: \max\{|N_{E_1}(b)\Delta N_{E_1}(y_j)|,|N_{E_0}(b)\Delta N_{E_0}(y_j)|\}\leq \delta |A'|\}.
$$
Then let $B_0=B_{err}\cup V$, and for each  $j\in [p]$, define
$$
B_j=\{b\in B\setminus B_0: g(b)=j\}.
$$
This yields a partition $\calP^B=\{B_0,B_1,\ldots, B_p\}$ of $B$.  Note that by construction, 
$$
|B_0|\leq (\delta +2\e^{1/4})|B|<2\delta|B|,
$$
where the last inequality is by assumption on $\e$.  Define
$$
A_0=U\cup \bigcup_{i=1}^pN_{E_2}(y_i).
$$
Observe $|A_0|\leq \sqrt{\e}|A|+2p\e^{1/4}|A'|\leq 2\delta |A|$, where the inequality is by the bound on $p$, and our assumptions on $\e$ and $\delta$.  Then, for each $a\in A\setminus A_0$, let 
$$
f(a)=\min\{i\in [m]: \max\{|N_{E_1}(a)\Delta N_{E_1}(x_i)|,|N_{E_0}(a)\Delta N_{E_0}(x_i)|\}\leq \delta |B|\}.
$$
Now for each $i\in [m]$,  let 
$$
A_i=\{a\in A\setminus A_0: f(a)=i\}.
$$
We now have a partition $\calP^A=\{A_0,A_1,\ldots, A_m\}$ of $A$.  We show $\calP=(\calP_A,\calP_B)$ has the desired properties.  To ease notation in the verification, we let $A''=A\setminus A_0$ and $B''=B\setminus B_0$.  For each $1\leq i\leq m$ and $x\in A_{i}$, let 
$$
\Gamma_A(x)=\Big(N_{E_2}(x)\cup (N_{E_0}(x)\Delta N_{E_0}(x_{i}))\cup (N_{E_1}(x)\Delta N_{E_1}(x_{i}))\Big)\cap B''.
$$
By construction, for all $x\in A''$, 
$$
|\Gamma_A(x)|\leq \sqrt{\e}|B|+2\delta |B|\leq 8\delta |B''|,
$$
where the second inequality is by the lower bound on $B''$ and our assumptions on $\e,\delta$.  Now let $\Gamma_A=\bigcup_{x\in A''}\Gamma_A(x)$. By above, $|\Gamma_A|\leq 10\delta |A''||B''|$.   Similarly, for each $1\leq j\leq p$ and $y\in B_{j}$, let 
$$
\Gamma_B(y)=\Big((N_{E_0}(y)\Delta N_{E_0}(b_{j}))\cup (N_{E_1}(y)\Delta N_{E_1}(b_{j}))\Big)\cap A''.
$$
By construction, we have that for all $b\in B''$, $|\Gamma_B(x)|\leq 2\delta |A''|$. Now let $\Gamma_B=\bigcup_{b\in B''}\Gamma_B(b)$, and observe that $|\Gamma_B|\leq 2\delta |A''||B''|$.  Setting, $\Gamma=\Gamma_A\cup \Gamma_B$ the above implies $|\Gamma|\leq 10\delta |A''||B''|$.  We now define three sets of pairs from $\calP_A\times \calP_B$.
\begin{align*}
\Sigma_1&=\{(A_{i},B_{j})\in \calP_A\times \calP_B:i=0\text{ or }j=0\},\\
\Sigma_2&=\{(A_{i},B_{j})\in \calP_A\times \calP_B:|\Gamma\cap K_2[A_{i},B_{j}]|\leq 2\sqrt{\delta}|A_{i}||B_{j}|\}\text{ and }\\
\Sigma_3&=\{(A_{i},B_{j})\in \calP_A\times \calP_B:|A_i|<2\sqrt{\delta}|A|/m\text{ or }|B_j|<2\sqrt{\delta}|B|/p\}
\end{align*}
We observe that the upper bounds on $|A_0|$ and $|B_0|$ imply
$$
|\bigcup_{(X,Y)\in \Sigma_1}K_2[X,Y]|\leq |A_0||B|+|A||B_0|\leq 4\delta |A||B|.
$$
Since $|\Gamma|\leq 10\delta |A''||B''|$, Lemma \ref{lem:averaging} implies
$$
|\bigcup_{(X,Y)\in \Sigma_2}K_2[X,Y]|\leq 10\sqrt{\delta}|A||B|.
$$
Finally, it is straightforward to see that by definition of $\Sigma_3$, 
$$
|\bigcup_{(X,Y)\in \Sigma_3}K_2[X,Y]|\leq 4\sqrt{\delta} |A||B|.
$$
Setting $\Sigma=(\calP_A\times \calP_B)\setminus (\Sigma_1\cup \Sigma_2\cup \Sigma_3)$, we have by the inequalities above that
$$
|\bigcup_{(X,Y)\in \Sigma}K_2[X,Y]|\geq (1-18\sqrt{\delta})|A||B|>(1-\delta^{1/16})|A||B|.
$$
Thus it suffices to show that for every $(X,Y)\in \Sigma$, there is some $\alpha\in \{0,1\}$ so that $|E_{\alpha}\cap K_2[X,Y]|\geq (1-\delta^{1/16})|X||Y|$.
 
To this end, fix some $(X,Y)\in \Sigma$.  By construction, there are some $1\leq i\leq m$ and $1\leq j\leq p$ so that $X=A_{i}$ and $Y=B_{j}$.  Define
$$
Y_0=N_{E_0}(x_i)\cap B_j \text{ and }Y_1=N_{E_1}(x_i)\cap B_j.
$$
Since $(X,Y)\in \Sigma$, we have that for each $\alpha\in \{0,1\}$,  
\begin{align*}
|E_{\alpha}\cap K_2[A_i,Y_{\alpha}]|\geq |A_i||Y_{\alpha}|-|K_2[A_i,Y_{\alpha}]\setminus \Gamma_A|\geq  |A_i||Y_{\alpha}|-10\delta^{1/2}|A_i||B_j|.
\end{align*}
Consequently, if we know there is some $\alpha\in \{0,1\}$ so that $|Y_{\alpha}|\geq (1-\delta^{1/8})|B_j|$, then we are done, as the above would then imply 
$$
|E_{\alpha}\cap K_2[A_i,Y_{\alpha}]|\geq (1-10\delta^{1/2}-\delta^{1/8})|A_i||B_j|\geq (1-\delta^{1/16})|A_i||B_j|.
$$
Thus, it suffices to show $\max\{|Y_1|,|Y_0|\}\geq (1-\delta^{1/8})|B_j|$.  Assume towards a contradiction $\max\{|Y_1|,|Y_0|\}<(1-\delta^{1/8})|B_j|$, i.e. $\min\{|Y_1|,|Y_0|\}>\delta^{1/8}|B_j|$. Since $|\Gamma\cap K_2[A_i,B_j]|\geq (1-10\delta^{1/2})|A_i||B_j|$, there is a set $Y'\subseteq B_j$ satisfying 
$$
|Y'|\geq (1-10\delta^{1/4})|B_j|,
$$
such that for all $y\in Y'$, $|N_{\Gamma}(y)\cap A_i|\geq (1-\delta^{1/4})|A_i|$.  Combining with the lower bounds on the sizes of $Y_1$ and $Y_0$, the pigeon hole property tells us there exist some $b_0\in Y'\cap Y_0$ and some $b_1\in Y'\cap Y_1$.  We now have that $x_ib_0\in E_0$ while $x_ib_1\in E_1$. However, we also have $x_ib_0,x_ib_1\in \Gamma_B$, which implies 
$$
x_ib_0\in E_0\Leftrightarrow x_iy_j\in E_0\Leftrightarrow x_ib_1\in E_0.
$$
We have now arrived at a contradiction.
\qed

\section{Proof of the Main Theorem} 

In this section we will prove Theorem \ref{thm:slvccor}.  It will follow from the following slightly less general statement.

\begin{theorem}\label{thm:slvc}
For all $k\geq 1$ there are $K,\e^*>0$ so that for all $0<\e<\e^*$ the following holds. Suppose $H=(U\cup V\cup W,F)$ is a sufficiently large tripartite $3$-graph with $|U|=|V|=|W|$ and with slicewise VC-dimension less than $k$. Then there exists an $\tau$-homogeneous partition $\calP$ for $H$ of size at most $2^{2^{\tau^{-K}}}$.  

Moreover, $\calP$ has the form $\calP_U\cup \calP_V\cup \calP_W$ where $\calP_U,\calP_V,\calP_W$ are almost $\tau^{64}/28$-good partitions of $U,V,W$, respectively.
\end{theorem}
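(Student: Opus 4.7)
The plan is to construct almost $\tau^{64}/28$-good partitions $\calP_U, \calP_V, \calP_W$ of $U$, $V$, $W$ refining the tripartition; by Proposition \ref{prop:goodhom}, the union $\calP := \calP_U \cup \calP_V \cup \calP_W$ is then $\tau$-homogeneous, and the moreover clause follows immediately from the construction.

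To build $\calP_U$, I first produce auxiliary partitions $\calQ_V$ of $V$ and $\calQ_W$ of $W$, of size $T \leq 2^{\tau^{-O(1)}}$, with the property that for most $u \in U$ the pair partition $\calQ_V \cup \calQ_W$ is $\e$-homogeneous with respect to the slice graph $H_u$ (for some $\e$ polynomial in $\tau$). The construction applies Theorem \ref{thm:bipfoxpachsuk} to the slice graphs $H_{u_1}, \ldots, H_{u_m}$ for a carefully chosen collection of $m = \tau^{-O(1)}$ vertices $u_i \in U$ and takes the common refinement. A VC-sampling argument (relying on the slicewise VC-dimension bound) guarantees that the common refinement is $\e$-homogeneous for most slice graphs; alternatively, this step can be phrased using Proposition \ref{prop:vcremoval} applied to an edge-colored bipartite graph on $U \cup (\calQ_V \times \calQ_W)$ whose colors record dense/sparse/error behavior of slice graphs, with $U(k)$-freeness secured by slicewise VC-dimension less than $k$.

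Next, I assign to each $u \in U$ a type $\phi(u) : \calQ_V \times \calQ_W \to \{0,1,*\}$ recording whether $d_{H_u}(V',W')$ is small, large, or neither. Let $\calP_U$ be the partition of $U$ into level sets of $\phi$. Then $|\calP_U| \leq 3^{T^2} \leq 2^{2^{\tau^{-O(1)}}}$. For a type class $X$ and a pair $(V',W')$ labeled $0$ or $1$, averaging over $u \in X$ and applying Markov's inequality shows that for most $(v,w) \in V' \times W'$, the fraction $|N_H(vw) \cap X|/|X|$ is near the corresponding extreme. The property (from step one) that $\calQ_V \cup \calQ_W$ is $\e$-homogeneous for most $H_u$ ensures that within most type classes the $*$-labeled pairs have small total mass; aggregating via Lemma \ref{lem:averaging} then yields almost-goodness of $\calP_U$.

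Running the symmetric construction with $V$ and $W$ in the role of $U$ produces almost $\e'$-good partitions $\calP_V$ and $\calP_W$, each of size $2^{2^{\tau^{-O(1)}}}$. Choosing the parameters so that $\e' \leq \tau^{64}/28$ and invoking Proposition \ref{prop:goodhom} finishes the proof, with overall size bound $2^{2^{\tau^{-K}}}$ for a suitable $K = K(k)$. The main technical obstacle is the first step: constructing $\calQ_V, \calQ_W$ of size $2^{\tau^{-O(1)}}$ that are $\e$-homogeneous for most slice graphs simultaneously. The naive common refinement over all $|U|$ slice graphs is far too large, so one must use slicewise VC-dimension to reduce to a sample of $\tau^{-O(1)}$ slice graphs, and then carefully track the error terms across the nested applications of Theorem \ref{thm:bipfoxpachsuk}, Proposition \ref{prop:vcremoval}, the symmetry lemma (Lemma \ref{lem:symmetry}), and Proposition \ref{prop:goodhom} so that the final homogeneity parameter does not degrade past $\tau$.
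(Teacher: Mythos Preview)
Your high-level outline matches the paper: build almost-good partitions of $U,V,W$ and conclude via Proposition~\ref{prop:goodhom}. The gap is in the step you yourself flag as ``the main technical obstacle'': constructing a single pair of partitions $\calQ_V,\calQ_W$ of size $2^{\tau^{-O(1)}}$ that is $\e$-homogeneous for most slice graphs $H_u$.

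Neither of your two suggestions for this step is justified. For the sampling idea, slicewise VC-dimension only says each individual $H_u$ has bounded VC-dimension; it gives no relation between different slice graphs, so there is no reason a common refinement built from $\tau^{-O(1)}$ sampled slices should be homogeneous for a typical unsampled slice. For the alternative via Proposition~\ref{prop:vcremoval}, you would need the edge-colored graph on $U\cup(\calQ_V\times\calQ_W)$ to be $U(k)$-free in the $E_0/E_1$ colors, but a putative $U(k)$-copy there involves $k$ \emph{different} vertices $u_1,\dots,u_k\in U$ and pairs $(V'_S,W'_S)$ with prescribed densities in $H_{u_i}$; this is a statement across slices and does not contradict $\VC(H_u)<k$ for any single $u$.

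The paper sidesteps this entirely: it never builds a universal $\calQ_V\cup\calQ_W$. Instead, each $x\in U$ gets its own homogeneous partition $\calP_x$ of $V\cup W$ (size $\ell=\e^{-O(1)}$), and one records only the \emph{reduced edge-colored graph} $\calG_x$ on the parts of $\calP_x$. Since $\calG_x$ has $\leq\ell$ vertices on each side and three edge colors, there are at most $3^{\ell^2}$ possible $\calG_x$'s, so the equivalence relation $x\equiv x'\Leftrightarrow\calG_x=\calG_{x'}$ has at most $3^{\ell^2}$ classes. The real work (Proposition~\ref{prop:mainslice}) is then done one class $X$ at a time: within $H[X,V,W]$, all $x\in X$ share the same reduced graph, and this is where the slicewise VC bound is finally used, via Observation~\ref{ob} and Proposition~\ref{prop:vcremoval}, to produce a small almost-good partition $\calP_V^X$ of $V$. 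The final $\calP_V$ is the common refinement of the $\calP_V^X$ over the $3^{\ell^2}$ classes. The counting of reduced-graph types replaces your sampling step, and it is what makes the argument go through.
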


We can easily deduce Theorem \ref{thm:slvccor} from Theorem \ref{thm:slvc}.

\vspace{2mm}

\noindent{\bf Proof of Theorem \ref{thm:slvccor} from Theorem \ref{thm:slvc}.}
Let $K,\e^*$ be as in Theorem \ref{thm:slvc} and $0<\e<(\e^*)^2$.  Suppose $H=(V,E)$ is a sufficiently large $3$-graph with slicewise VC-dimension less than $k$.  Let $H'$ be the $3$-partite $3$-graph  $(A\cup B\cup C,E(H'))$, where $A=\{a_v: v\in V\}$, $B=\{b_v: v\in V\}$ and $C=\{c_v: v\in V\}$, and 
$$
E(H')=\{a_vb_{v'}c_{v''}: vv'v''\in E\}.
$$
Note $H'$ has slicewise VC-dimension less than $k$, and $|A|=|B|=|C|$.  By Theorem \ref{thm:slvc}, there is an $\e^2$-homogeneous partition $\calP=\calP_A\cup \calP_B\cup \calP_C$ of $H'$ of size $t\leq 2^{2^{\e^{-2K}}}$.  Say $\calP_A=\{A_1,\ldots, A_{t_1}\}$, $\calP_B=\{B_1,\ldots, B_{t_2}\}$ and $\calP_C=\{C_1,\ldots, C_{t_3}\}$.

For each $A_i,B_j,C_s\in \calP$, let $A_i'=\{v\in V: a_v\in A_i\}$, $B_j'=\{v\in V: b_v\in B_j\}$ and $C_s'=\{v\in V: c_v\in C_s\}$.  Let $\calQ$ be the partition of $V$ which is the common refinement of the partitions $\{A_1',\ldots, A_{t_1}'\}$, $\{B_1',\ldots, B_{t_2}'\}$, and $\{C_1',\ldots, C_{t_3}'\}$. Clearly
$$
|\calQ|\leq t_1t_2t_3\leq (2^{2^{\e^{-2K}}})^3=2^{3\cdot 2^{\e^{-2K}}}.
$$
Clearly there is some $K'>0$ depending only on $K$ so that for all sufficiently small $\e>0$, $2^{3\cdot 2^{\e^{-2K}}}<2^{2^{\e^{-K'}}}$.  Thus, it suffices to show $\calQ$ is $\e$-homogeneous with respect to $H$.

For each $X\in \calQ$, let $X^A=\{a_v: v\in X\}$, $X^B=\{b_v: v\in X\}$, $X^C=\{c_v: v\in X\}$.  Then $\calP'=\{X^A,X^B,X^C: X\in \calQ\}$ is a partition of $V(H')$ refining $\calP$.  By Lemma \ref{lem:averaging}, $\calP'$ is $\e$-homogeneous with respect to $H'$.  Now let 
$$
\Sigma=\{(X,Y,Z)\in \calQ^3: \text{$(X^A,Y^B,Z^C)$ is $\e$-homogeneous with respect to $H'$}\}
$$
For all $(X,Y,Z)\in \Sigma$, $d_H(X,Y,Z)=d_{H'}(X^A,Y^B,Z^C)\in [0,\e)\cup (1-\e,1]$.  Further, 
$$
|\bigcup_{(X,Y,Z)\in \Sigma}X\times Y\times Z|\geq |\bigcup_{(X,Y,Z)\in \Sigma}X^A\times Y^B\times Z^C|\geq (1-\e)|A||B||C|=(1-\e)|V(H)|^3.
$$
This shows $\calQ$ is an $\e$-homogeneous partition of $H$, as desired.
\qed

\vspace{2mm}

The rest of the paper will focus on proving Theorem \ref{thm:slvc}.  The proof of Theorem \ref{thm:slvc} is rather long, so we now give an outline of the overall strategy.  The starting point is a $3$-partite $3$-graph  $H=(U\cup V\cup W, E)$ with $|U|=|V|=|W|$ and slicewise VC-dimension less than $k$, and the goal is to construct a homogeneous partition for $H$ with few parts.

\vspace{2mm}

\noindent\underline{Step 1 (Reduced Graphs for Slice Graphs)}:  Since $H$ has slicewise VC-dimension less than $k$, we have that for all $x\in U$, $H_x$ has VC-dimension less than $k$ (see Definition \ref{def:slicegraph}).  By Theorem \ref{thm:bipfoxpachsuk}, there is an homogeneous equipartition $\calP_x$ of $H_x$ refining $\{V,W\}$ with a small number of parts.  We then construct a bipartite edge-colored graph $\calG_x$ as discussed in Section \ref{ss:ecg}. In particular, the  vertices of $\calG_x$ are the parts of $\calP_x$, and the edge colors $E_0^x,E_1^x,E_2^x$ correspond to the dense, sparse, and non-homogeneous pairs of $\calP_x$, respectively.

\vspace{2mm}

\noindent\underline{Step 2 (Group vertices based on Reduced Graphs)}: Partition $U$ into equivalence classes, where  $x\equiv x'$ if and only if $\calG_x=\calG_{x'}$.

\vspace{2mm}

\noindent\underline{Step 3 (Structure Theorem on Equivalence Classes)}: For each $\equiv$-equivalence class $X$ in $U$, find a partition $\calP_X^V$ of $V$ which is almost $\e$-good with respect to $H[X, V, W]$.

\vspace{2mm}

\noindent\underline{Step 4 (Refine)}:  Let $\calP^V$ be the partition of $V$ obtained by taking the common refinement of the partitions $\calP_X^V$ from step 3, as $X$ ranges over the $\equiv$-equivalence classes in $U$.  Show this yields an almost $\e$-good partition $\calP_V$ of $V$ with respect to $H$.

\vspace{2mm}

\noindent\underline{Step 5 (Repeat)}: Repeat Steps 1-4 with the roles of $U,V,W$ permuted to obtain almost $\e$-good partitions $\calP_W$ of $W$ and $\calP_U$ of $U$.

\vspace{2mm}

\noindent\underline{Step 7 (Symmetry)}: Use Proposition \ref{prop:goodhom} to conclude $\calP=\calP_U\cup \calP_V\cup \calP_W$ is a homogeneous partition with respect to $H$.

\vspace{2mm}

As the reader can see, there are many steps to this argument.  The hardest is Step 3.  We choose to encapsulate this step into its own result, Proposition \ref{prop:mainslice}, which we will state and prove in the next subsection, Subsection \ref{ss:step3}.  In the final subsection, Section \ref{ss:final}, we will prove Theorem \ref{thm:slvc} using Proposition \ref{prop:mainslice}.

\subsection{Accomplishing Step 3}\label{ss:step3}

In this section, we prove the main lemma, Proposition \ref{prop:mainslice}, which is required to execute Step 3 of the strategy outlined above.  The goal of Proposition \ref{prop:mainslice} is to give a structure theorem for any  tripartite $3$-graph $H=(U\cup V\cup W,F)$ with slicewise VC-dimension less than $k$, under the assumption that there exists a single bipartite edge-colored graph $\calG$ so that for all $x\in U$, $\calG$ arises as the reduced graph associated to a homogeneous partition of $H_x$.

\begin{proposition}\label{prop:mainslice}
Suppose $1\leq s,r\leq \ell$, $k\geq 1$, and $c=c(k)$ is as in Proposition \ref{prop:vcremoval}.   Let $\e>0$ be sufficiently small compared to $c$, $1/\ell$ and $1/k$, and set $\mu=8(16\e^{1/16}c^4)^{1/4k+4}$.  Assume the following hold.
\begin{enumerate}
\item\label{sl:0} $H=(U\cup V\cup W,F)$ is a sufficiently large tripartite $3$-graph with $\SVC(H)<k$,
\item $\calG=(P\cup Q, F_0,F_1,F_2)$ is a bipartite edge-colored graph with $P=\{p_0,\ldots, p_r\}$ and $Q=\{q_0,\ldots, q_s\}$, 
 \item\label{sl:1} For all $x\in U$, there is a partition $V=B_x^0\cup \ldots \cup B_x^r$ and $W=C_x^0\cup \ldots \cup C_x^s$ so that the following hold, where $E(H)^1:=E(H)$ and $E(H)^0:={V(H)\choose 3}\setminus E(H)$.
 \begin{enumerate}
 \item\label{sl:2} For each $\alpha\in \{0,1\}$ and $p_iq_j\in F_{\alpha}$, $|E(H_x)^{\alpha}\cap K_2[B_x^i,C_x^j]|\geq (1-\e)|B_x^i||C_x^j|$,
 \item\label{sl:3} For each $1\leq i\leq r$ and $1\leq j\leq s$, $|B_x^i|=|C_x^j|=|W|/\ell$,
 \item\label{sl:4} $|B_x^0|\leq \e |V|$ and $|C_x^0|\leq \e |W|$.
 \end{enumerate}
  \item\label{sl:5} For all $1\leq i\leq r$ and $1\leq j\leq s$, $|N_{F_2}(p_i)|\leq \e |Q|$ and $|N_{F_2}(q_j)|\leq \e|P|$.
\end{enumerate}
Then there exists a partition $\calP_V$ of $V$ which is almost $6\mu^{1/512}$-good with respect to $H$ satisfying
$$
|\calP_V|\leq  (2c(\mu/8)^{-k})^{\ell(2\ell)^{\ell}\e^{-\ell}}.
$$
\end{proposition}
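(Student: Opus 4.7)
The plan is to build $\mathcal{P}_V$ by iterated refinement driven by Proposition \ref{prop:vcremoval}. The guiding heuristic is that each $v\in V$ has a profile $\phi_v : U \to P$ given by $\phi_v(x) = p_i$ whenever $v \in B_x^i$, and likewise $\psi_w : U \to Q$ records the analogous $Q$-labels of $w \in W$. By hypothesis (\ref{sl:2}), for most pairs $(x,w)$ the $F$-membership of $xvw$ is determined, up to a small error, by the $\mathcal{G}$-color of the pair $(\phi_v(x), \psi_w(x))$; so two vertices with similar profiles behave similarly from the perspective of $H$, and grouping $V$ by approximate profiles should yield parts that are internally almost good.

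To realize this effectively, I would work column-by-column. For each $j \in \{1,\ldots,s\}$ and each well-chosen test witness $w \in W$, form the bipartite edge-colored graph $\Lambda_{w,j}$ on $U_{w,j} \cup V$, where $U_{w,j} = \{x \in U : w \in C_x^j\}$. Color the pair $(x,v)$ by $E_1$ when $\phi_v(x)q_j \in F_1$ and $xvw \in F$; by $E_0$ when $\phi_v(x)q_j \in F_0$ and $xvw \notin F$; and by $E_2$ otherwise, so that $E_2$ absorbs all discrepancies with the $\mathcal{G}$-prediction together with contributions from $B_x^0$, $C_x^0$ and $F_2$-pairs. Assumptions (\ref{sl:2}), (\ref{sl:4}) and (\ref{sl:5}) combine to bound the average size of $E_2$ by $O(\epsilon)|U_{w,j}||V|$, while any $E_0/E_1$-copy of $U(k)$ in $\Lambda_{w,j}$ would shatter $k$ points inside the slice graph $H_w$, contradicting $\SVC(H)<k$. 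Thus the hypotheses of Proposition \ref{prop:vcremoval} are met with $\delta=\mu$, and each application gives a refinement of $V$ into at most $2c(\mu/8)^{-k}$ parts that is $2\mu^{1/16}$-homogeneous for $\{E_0,E_1\}$.

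I would then combine these refinements across a family of test configurations. The target bound $\ell(2\ell)^\ell\epsilon^{-\ell}$ reflects an enumeration over: at most $s\leq\ell$ column indices $j$; at most $|P|^\ell \leq (2\ell)^\ell$ pattern choices $\vec{p} \in P^X$ for a representative sample $X \subseteq U$ of size $\leq \ell$; and at most $\epsilon^{-\ell}$ placements of error regions of relative measure at least $\epsilon$. The key combinatorial tool is a Haussler-type argument in the spirit of Lemma \ref{lem:hausslercor}, which provides a small representative sample $X\subseteq U$ such that every $v\in V$ is classified (up to Hamming error $\epsilon$) by its restriction $\phi_v|_X$. Each iteration multiplies the partition size by at most $2c(\mu/8)^{-k}$, producing the stated upper bound on $|\mathcal{P}_V|$. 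To secure almost-$6\mu^{1/512}$-goodness I would then chain Lemma \ref{lem:averaging} through roughly five levels, each contributing a square-root loss, to translate per-$(w,j)$ control of the graphs $\Lambda_{w,j}$ into per-part control of $\mathcal{P}_V$, landing precisely at the $\mu^{1/512}$ exponent needed downstream by Proposition \ref{prop:goodhom}.

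The main obstacle will be making this enumeration rigorous: showing that refining over only $\ell(2\ell)^\ell\epsilon^{-\ell}$ test configurations is sufficient. This requires extracting from $\SVC(H)<k$ a uniform bound on the number of distinct behaviors of the mapping $v \mapsto \phi_v|_X$ as $X \subseteq U$ ranges over samples of size $\ell$, via a VC-dimension argument applied to the derived set system $\{M_i(v) : v \in V\}$ on $U$, where $M_i(v) = \{x : v \in B_x^i\}$. A secondary obstacle is bookkeeping the five square-root losses so that the homogeneity exponent lands exactly at $1/512$, rather than degrading further across the composition of averaging steps.
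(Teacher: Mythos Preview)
Your broad architecture---use a witness $c\in W$ to convert the $\SVC(H)<k$ hypothesis into a no-$U(k)$ condition on an edge-colored bipartite graph on $U$-type $\times V$, then invoke Proposition~\ref{prop:vcremoval}, then refine---matches the paper. The gap is in the step you flag as the ``main obstacle'': your proposed mechanism for bounding the number of test configurations is not right.

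You suggest the factor $(2\ell)^{\ell}\e^{-\ell}$ arises from pattern choices in $P^X$ for a representative sample $X\subseteq U$ of size $\leq\ell$, together with a Haussler-type argument on the set system $\{M_i(v):v\in V\}$. But there is no reason that system has VC-dimension bounded in terms of $k$: the hypothesis $\SVC(H)<k$ controls the slice graphs $H_c$, not the combinatorics of the maps $x\mapsto B_x^i$. So you cannot extract a size-$\ell$ sample this way, and the enumeration collapses.

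The paper's route is different and more elementary. It does not sample $U$; it \emph{partitions} $U$ first. For each color $v\in[s]$, Lemma~\ref{lem:cover} (a greedy covering argument, not a VC argument) partitions $U_{good}$ into at most $\delta^{-1/2}+2$ pieces, each either contained in a single neighborhood $N_{Q_v^{good}}(c_{v,j})$ for one witness $c_{v,j}\in W$, or consisting of vertices with tiny $Q_v^{good}$-degree. Taking the common refinement over $v\in[s]$ with $\delta=\e^{1/2}/2\ell$ yields a partition $\calX$ of $U$ of size at most $(2\ell)^{\ell}\e^{-\ell}$. Now each piece $A\in\calX$ comes equipped with one witness $c_{v,j_v}$ per relevant color $v$, and the edge-colored graph is built on $A\cup V$ (not on $U_{w,j}\cup V$ for a floating $w$). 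This is what makes Observation~\ref{ob} fire uniformly across $A$ and what makes the final refinement finite: one applies Proposition~\ref{prop:vcremoval} once per pair $(A,v)$, giving $|\calP_V|\leq \bigl((2c(\mu/8)^{-k})^{\ell}\bigr)^{|\calX|}$.

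A secondary issue: in your setup the partition of $V$ produced from $\Lambda_{w,j}$ controls $N_H(xw)$ only for the specific witness $w$, whereas almost-goodness requires control for generic $(x,w')\in U\times W$. The paper handles this by defining the colors $\mathbf{P}_{v,A}^\tau$ purely from the $\calG$-prediction $P_v^\tau$ (so they govern all $w'\in C_x^v$ via hypothesis~(\ref{sl:2})), and then intersecting with $N_\Gamma(c_{v,j_v})$ only to secure the no-$U(k)$ conclusion. Separating these two roles is what lets the partition of $V$ transfer from one witness to the whole of $W$.
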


Note that hypotheses (\ref{sl:1}) and (\ref{sl:5}) in Proposition \ref{prop:mainslice} tell us that for each $x\in U$, the partition $\{B_x^0\cup \ldots \cup B_x^r,C_x^0\cup \ldots \cup C_x^s\}$ is an $\e$-homogeneous partition for the slice graph $H_x$. Before proving Proposition \ref{prop:mainslice}, we require two lemmas.  The first, Lemma \ref{lem:cover} below, tells us that if $G=(A\cup B,E)$ is a bipartite graph, then we can cover most of $A$ with sets which are either contained the neighborhood of a single element in $G$, or incident to few edges in $G$.  

\begin{lemma}\label{lem:cover}
Suppose $G=(A\cup B, E)$ is a bipartite graph and $\delta>0$.  There exists an integer $t\leq \delta^{-1/2}$ and a partition $A=A_{err}\cup A_0\cup A_1\cup \ldots\cup A_t$ of $A$ so that the following hold.
\begin{enumerate}[(i)]
\item $|A_{err}|\leq \delta^{1/4}|A|$,
\item For each $1\leq i\leq t$, there is a vertex $b_i\in B$ so that $A_i\subseteq N_G(b_i)$ and $|A_i|=\sqrt{\delta}|A|$,
\item For all $x\in A_0$, $|N_G(x)|\leq \sqrt{\delta}|B|$,
\end{enumerate}
\end{lemma}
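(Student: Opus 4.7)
The plan is a greedy neighborhood-covering argument. First, I would set $A_0 := \{x \in A : |N_G(x)| \leq \sqrt{\delta}|B|\}$, which by construction satisfies (iii), and write $H := A \setminus A_0$ for the set of high-degree vertices; every $x \in H$ has $|N_G(x)| > \sqrt{\delta}|B|$.

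Next I would iteratively extract covering pieces from $H$. Maintain $H_i$ for the uncovered portion of $H$, initialized to $H_1 := H$. At step $i$, double-count edges between $H_i$ and $B$:
\[
\sum_{b \in B}|N_G(b) \cap H_i| \;=\; \sum_{x \in H_i}|N_G(x)| \;>\; \sqrt{\delta}|B||H_i|,
\]
so by averaging there exists $b_i \in B$ with $|N_G(b_i) \cap H_i| \geq \sqrt{\delta}|H_i|$. While this maximum is at least $\sqrt{\delta}|A|$, I pick $A_i$ to be any subset of $N_G(b_i) \cap H_i$ of size exactly $\sqrt{\delta}|A|$ and set $H_{i+1} := H_i \setminus A_i$. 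When the greedy can no longer extract a piece of this size, halt and declare $A_{err}$ to be the remaining vertices of $H$.

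The bound $t \leq \delta^{-1/2}$ required by (ii) is then automatic, since the $A_i$ are disjoint subsets of $A$ each of size $\sqrt{\delta}|A|$, so $t\sqrt{\delta}|A| \leq |A|$. Condition (iii) is built in, and each $A_i \subseteq N_G(b_i)$ by construction.

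The main obstacle will be verifying (i), that $|A_{err}| \leq \delta^{1/4}|A|$. The naive termination analysis—coupling the stopping criterion $\max_b|N_G(b) \cap H_i| < \sqrt{\delta}|A|$ with the double count above—yields only $|H_i| < |A|$, which is too weak. To strengthen this, the plan is to leverage the averaging bound $\max_b|N_G(b) \cap H_i| \geq \sqrt{\delta}|H_i|$: while $|H_i| \geq \delta^{1/4}|A|$, this maximum is at least $\sqrt{\delta}\cdot\delta^{1/4}|A| = \delta^{3/4}|A|$, which is below the target $\sqrt{\delta}|A|$, so naive greedy may stall prematurely. I would address this by either running the greedy in two stages (first stripping off pieces while $|H_i|$ is large enough that $\sqrt{\delta}|H_i| \geq \sqrt{\delta}|A|$, i.e.\ near $|A|$, and then reapplying the averaging to the smaller residual after rescaling), or by initially splitting $H$ into degree-layers with stricter thresholds so that the averaging bound on each layer is strong enough to guarantee a full-sized extraction whenever the uncovered portion still exceeds $\delta^{1/4}|A|$. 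Either route yields the desired error bound because the only place the greedy can fail is once $|H_i|$ has already dropped below $\delta^{1/4}|A|$, at which point the surviving vertices can be dumped into $A_{err}$ without harm.
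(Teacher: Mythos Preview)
Your greedy covering approach matches the paper's, but there is a genuine gap in the stopping analysis that your proposed fixes do not close. The closing assertion---that the greedy can only stall once $|H_i|<\delta^{1/4}|A|$---is false with your choice of threshold for $A_0$. Averaging gives only $\max_b|N_G(b)\cap H_i|\ge\sqrt{\delta}\,|H_i|$, and for this lower bound to reach $\sqrt{\delta}\,|A|$ you would need $|H_i|\ge|A|$, not $|H_i|\ge\delta^{1/4}|A|$. Concretely, take $|A|=|B|=n$, let $H$ consist of $n/2$ vertices each of degree just above $\sqrt{\delta}\,n$, and spread the edges so that every $b\in B$ has $|N(b)\cap H|\approx\sqrt{\delta}\,n/2$; then no $b$ supplies a piece of size $\sqrt{\delta}\,n$, the greedy stalls immediately at $|H_1|=n/2$, and neither your two-stage scheme (the first stage is vacuous since $|H_i|\le|A|$ always) nor degree-layering helps, since in this example all of $H$ sits at the bottom degree layer.

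The paper sidesteps this by reversing your order: it runs the greedy on \emph{all} of $A$, peeling off pieces while some $b$ still has large intersection with the remainder, and defines $A_0$ only \emph{after} the greedy halts. At that point every $b$ has small intersection with the remainder, so summing over $b$ gives $|E(\text{remainder},B)|<\sqrt{\delta}\,|A||B|$, and Markov shows at most $\delta^{1/4}|A|$ remaining vertices have degree exceeding $\delta^{1/4}|B|$; these become $A_{err}$ and the rest become $A_0$. Note this yields $|N_G(x)|\le\delta^{1/4}|B|$ in (iii), not $\sqrt{\delta}\,|B|$: the stated exponent is a typo (the paper's own proof and its application both use $\delta^{1/4}$), and the same example above shows the literal statement with $\sqrt{\delta}$ is actually false. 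If you prefer to keep your order and define $A_0$ first, raise its threshold to $\delta^{1/4}|B|$; then averaging on $H_i$ gives $\max_b|N(b)\cap H_i|\ge\delta^{1/4}|H_i|$, which \emph{does} reach $\sqrt{\delta}\,|A|$ whenever $|H_i|\ge\delta^{1/4}|A|$, and your argument goes through cleanly.
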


The proof of Lemma \ref{lem:cover} a simple greedy algorithm, and appears in the appendix.  The second lemma, Lemma \ref{lem:refinement}, is about refinements of almost $\e$-good partitions.  It is essentially a corollary of Lemma \ref{lem:averaging}.

\begin{lemma}\label{lem:refinement}
Suppose $H=(V_1\cup V_2\cup V_3,E)$ is a $3$-partite $3$-graph.  Suppose $\{i_1,i_2,i_3\}=\{1,2,3\}$.  Suppose $\calP$ is a almost $\e$-good partition of $V_{i_1}$ and $\calQ\preceq \calP$.  Then $\calQ$ is an almost $\sqrt{\e}$-good partition of $V_{i_1}$. 
\end{lemma}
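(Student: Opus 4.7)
The proof will be a straightforward corollary of Lemma~\ref{lem:averaging} applied at two levels: first fiberwise over a single pair $(y,z)\in V_{i_2}\times V_{i_3}$, and then again at the level of the partition $\calQ$.

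Unpack the hypothesis: let $\calP'\subseteq\calP$ be a witness to $\calP$ being almost $\e$-good, so $|\bigcup\calP'|\ge(1-\e)|V_{i_1}|$ and each $X\in\calP'$ is itself almost $\e$-good as a set. Because $\calQ\preceq\calP$, for each $X\in\calP'$ the collection $\calQ_X:=\{Y\in\calQ:Y\subseteq X\}$ is a partition of $X$. The plan is to extract, for each $X\in\calP'$, a sub-collection $\calQ_X'\subseteq\calQ_X$ consisting of almost $\sqrt{\e}$-good parts and covering nearly all of $X$, and then set $\calQ':=\bigcup_{X\in\calP'}\calQ_X'$.

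Fix $X\in\calP'$ and a pair $(y,z)\in V_{i_2}\times V_{i_3}$ at which $X$ is good, meaning $|N_H(yz)\cap X|/|X|\in[0,\e)\cup(1-\e,1]$. Then one of $N_H(yz)\cap X$ or $X\setminus N_H(yz)$ has size at least $(1-\e)|X|$. I would apply Lemma~\ref{lem:averaging} to the partition $\calQ_X$ of $X$ with this large set as $A$ and with parameters $a=b=\sqrt{\e}$ (so $ab=\e$). The conclusion is that the total mass of those $Y\in\calQ_X$ whose density $|N_H(yz)\cap Y|/|Y|$ lies in the intermediate range $(\sqrt{\e},1-\sqrt{\e})$ is at most $\sqrt{\e}|X|$.

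Next I would double count pairs $(Y,(y,z))$ with $Y\in\calQ_X$ and $Y$ ``bad at $(y,z)$'' (its density falling in the intermediate range). Summing the fiberwise bound over the set of pairs that are good for $X$, and using a trivial $|X|$-bound for the at most $\e|V_{i_2}||V_{i_3}|$ pairs that are bad for $X$, gives
\begin{equation*}
\sum_{Y\in\calQ_X} |Y|\cdot \bigl|\{(y,z): Y\text{ bad at }(y,z)\}\bigr| \;\le\; (\sqrt{\e}+\e)|X||V_{i_2}||V_{i_3}|.
\end{equation*}
A second application of Lemma~\ref{lem:averaging} — viewing the left-hand side as an averaging statement on the partition $\calQ_X$ of $X$ — then lets me discard an $O(\sqrt{\e})$-mass subcollection of $\calQ_X$ so that each remaining $Y$ has at most a $\sqrt{\e}$-fraction of bad pairs, i.e.\ is almost $\sqrt{\e}$-good in the sense of Definition~\ref{def:almostgood}. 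Call this retained subcollection $\calQ_X'$. Taking $\calQ':=\bigcup_{X\in\calP'}\calQ_X'$ and combining with the bound $|\bigcup\calP'|\ge(1-\e)|V_{i_1}|$ yields $|\bigcup\calQ'|\ge(1-\sqrt{\e})|V_{i_1}|$, which (absorbing small numerical constants into the exponent, as elsewhere in the paper) establishes that $\calQ$ is almost $\sqrt{\e}$-good.

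The main subtlety is keeping the two threshold parameters coordinated: the $\e$-threshold governing goodness of $\calP$-parts and the $\sqrt{\e}$-threshold for goodness of $\calQ$-parts must be chosen so that the two successive applications of Lemma~\ref{lem:averaging} compose — the natural choice $a=b=\sqrt{\e}$ in the first application is precisely what makes $ab=\e$ close the loop.
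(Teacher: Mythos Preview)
Your approach is essentially the same as the paper's: both are two successive applications of Lemma~\ref{lem:averaging}. The paper organizes it slightly differently --- it first packages the hypothesis into a single large set $\Omega\subseteq K_3[V_{i_1},V_{i_2},V_{i_3}]$ of ``correct'' triples, then averages first over parts $S\in\calQ$ and second over pairs $(y,z)$ --- but your per-$X$ fiberwise double-count unwinds to the same computation.

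There is, however, a genuine slip in your parameter tracking at the second step. From
\[
\sum_{Y\in\calQ_X}|Y|\,|B(Y)|\;\le\;(\sqrt{\e}+\e)\,|X|\,|V_{i_2}||V_{i_3}|,
\]
Markov's inequality with threshold $\sqrt{\e}$ gives only the trivial bound on the mass of $Y$'s with $|B(Y)|>\sqrt{\e}|V_{i_2}||V_{i_3}|$; you cannot simultaneously discard $O(\sqrt{\e})$-mass \emph{and} keep the remaining $Y$'s with at most a $\sqrt{\e}$-fraction of bad pairs, since that would force the weighted average to be $O(\e)$ rather than $O(\sqrt{\e})$. Taking threshold $\e^{1/4}$ instead discards $O(\e^{1/4})$-mass and leaves each retained $Y$ almost $\e^{1/4}$-good, so the honest conclusion is that $\calQ$ is almost $O(\e^{1/4})$-good. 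This is not a matter of absorbing a constant into the exponent: the exponent really drops from $1/2$ to $1/4$. That said, the paper's own proof has exactly the same feature --- it explicitly shows the retained $S\in\calS$ are almost $\e^{1/4}$-good, not $\sqrt{\e}$-good --- so the stated $\sqrt{\e}$ in the lemma is a slight over-claim, and indeed the downstream application in Step~VII of Proposition~\ref{prop:mainslice} uses the $\e^{1/4}$ version (passing from $6\mu^{1/128}$ to $6\mu^{1/512}$).
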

\begin{proof}
Let $\calP_{good}$ be the set of almost $\e$-good sets in $\calP$.  Since, by assumption, $\calP$ is $\e$-good with respect to $H$, we have $|\bigcup_{X\in \calP_{good}}X|\geq (1-\e)|V_{i_1}|$.  For each $X\in \calP_{good}$, let 
\begin{align*}
\Gamma_1(X)&=\{yz\in K_3[V_{i_2},V_{i_3}]: |N_H(yz)\cap X|\geq (1-\e)|X|\}\text{ and }\\
\Gamma_0(X)&=\{yz\in K_3[V_{i_2},V_{i_3}]: |N_H(yz)\cap X|\leq \e |X|\}.
\end{align*}
and then set
$$
\Omega(X)=(\bigcup_{yz\in \Gamma_1(X)}K_3[\{y\},\{z\},N_H(yz)\cap X]\Big)\cup (\bigcup_{yz\in \Gamma_0(X)}K_3[\{y\},\{z\},X\setminus N_H(yz)]\Big).
$$
Note that for all $X\in \calP_{good}$, $|\Omega(X)|\geq (1-\e)|X||V_{i_2}||V_{i_3}|$.  Consequently, if we let 
$$
\Omega=\bigcup_{X\in \calP_{good}}\Omega(X),
$$
then $|\Omega|\geq (1-\e)|V_{i_1}||V_{i_2}||V_{i_3}|$.  By Lemma \ref{lem:averaging}, there is a subset $\calS\subseteq \calQ$ with the property that $|\bigcup_{S\in \calS}S|\geq (1-\e^{1/2})|V_{i_1}|$ and so that for all $S\in \calS$, 
$$
|\Omega\cap K_3[S,V_{i_2},V_{i_3}]|\geq (1-\e^{1/2})|S||V_{i_2}||V_{i_3}|.
$$
We show every $S\in \calS$ is almost $\e^{1/4}$-good with respect to $H$, which will finish the proof.  To this end, fix $S\in \calS$.  By Lemma \ref{lem:averaging}, if we define 
$$
\calG(S):=\{(y,z)\in V_{i_2}\times V_{i_3}: |N_{\Omega}(yz)\cap S|\geq (1-\e^{1/4})|S|\},
$$
then $|\calG(S)|\geq (1-\e^{1/4})|S|$.  By definition of $\Omega$, and since $S\subseteq X$ for some $X\in \calP_{good}$, we know that for all $(y,z)\in \calG(S)$, $N_{\Omega}(yz)\cap S\subseteq N_H(yz)\cap S$ or $N_{\Omega}(yz)\cap S\subseteq S\setminus N_H(yz)$.  Thus, for every $(y,z)\in \calG(S)$, we either have $|N_H(yz)\cap S|\geq (1-\e^{1/4})|S|$ or $|S\setminus N_H(yz)|\geq (1-\e)|S|$.  This finishes the proof.
\end{proof}

 We now prove Proposition  \ref{prop:mainslice}.

\vspace{2mm}

\noindent{\bf Proof of Proposition  \ref{prop:mainslice}.} Fix $1\leq s,r\leq \ell$, $k\geq 1$, and let $c=c(k)$ be as in Proposition \ref{prop:vcremoval}.   Fix $\e>0$ sufficiently small, and assume $H=(U\cup V\cup W,F)$ is a sufficiently large tripartite $3$-graph and $\calG=(P\cup Q, F_0,F_1,F_2)$ is an edge-colored bipartite graph so that $H$ and $\calG$ satisfy the hypotheses (\ref{sl:0})-(\ref{sl:5}) of Proposition  \ref{prop:mainslice}.  Throughout, the edge-colored bipartite graph serves a similar role as a ``reduced graph" or ``quotient graph" would in traditional applications of the regularity lemma.  To emphasize this, we will refer to it as the auxiliary structure, $\calG$. 

 We start by defining partitions of $K_2[U,V]$ and $K_2[U,W]$ using the sets appearing in assumption (\ref{sl:1}).  In particular, for each $0\leq u\leq r$, $0\leq v\leq s$, let
\begin{align*}
P_u=\{xy\in K_2[U,V]: y\in B^u_x\}\text{ and }Q_v=\{xy\in K_2[U,W]: y\in C^v_x\}.
\end{align*}
Note that $K_2[U,V]=P_0\cup \ldots \cup P_r$ and $K_2[U,W]=Q_0\cup \ldots \cup Q_s$.  By hypothesis (\ref{sl:4}), we have that for all $x\in U$,
$$
|N_{P_0}(x)|\leq \e|V|\text{ and }|N_{Q_0}(x)|\leq \e|W|,
$$
and consequently, 
\begin{align}\label{a}
|P_0|\leq \e|U||V|\text{ and }|Q_0|\leq \e|U||W|.
\end{align}
We will refer to the sets of the form $P_u$, $Q_v$ as edge colors, and we will think of $P_0$ and $Q_0$ as the ``error colors."   Using these edge colors, we now define a tripartite $(U,V,W)$-decomposition $\calD=(\calD_1,\calD_2)$ of $V(H)$.  Let $\calD_1=(\calD_U,\calD_V,\calD_W)$ where $\calD_U=\{U\}$, $\calD_W=\{W\}$, $\calD_V=\{V\}$.  Then let $\calD_2=\{\calD_{UV},\calD_{VW},\calD_{UW}\}$ where 
$$
\calD_{VW}=\{K_2[V,W]\},\text{ }\calD_{UV}=\{P_u:0\leq u\leq  r\},\text{ and }\calD_{UW}=\{Q_v:0\leq v\leq s\}.
$$
  A triad from $\calD$ is then a $3$-partite graph of the following form, where $0\leq u\leq r$ and $0\leq v\leq s$.
$$
T_{uv}=(U\cup V\cup W, P_u\cup Q_v\cup K_2[V,W]).
$$
We now show that certain triads from $\calD$ are homogeneous with respect to $H$ in the sense of Definition \ref{def:triphom}.

Given $1\leq u\leq r$ and $1\leq v\leq s$, let $t(u,v)\in \{0,1,2\}$ be such that, in the auxiliary structure $\calG$,  $p_uq_v\in F_{t(u,v)}$.  When $t(u,v)\in \{0,1\}$, we have by hypothesis (\ref{sl:3}) that 
$$
|K_3^{(2)}(T_{uv})|=\sum_{x\in U}|N_{P_u}(x)||N_{Q_v}(x)|=|U||V||W|/\ell^2,
$$
and by hypothesis  (\ref{sl:2}),
$$
|d_H(T_{uv})-t(u,v)|=\Big|\frac{|E(H)\cap K_3^{(2)}(T_{uv})|}{|K_3^{(2)}(T_{uv})|}-t(u,v)\Big|\leq \e.
$$ 
Using the notation $E(H)^1=E(H)$ and $E(H)^0=K_3[U,V,W]\setminus E(H)$, we can restate the above inequality as follows.
\begin{align}\label{gammagood}
|E(H)^{t(u,v)}\cap K_3^{(2)}(T_{uv})|\geq (1-\e)|K_3^{(2)}(T_{uv})|.
\end{align}
We now define a crucial object for the rest of the proof.  Let  $\Gamma$ be the $3$-partite $3$-graph with vertex set $U\cup V\cup W$ and edge set
$$
E(\Gamma):=\bigcup_{p_uq_v\in F_0\cup F_1}\Big(K^{(2)}_3(T_{uv})\cap E(H)^{t(u,v)}\Big).
$$
Roughly speaking, $E(\Gamma)$ is the set of triples from $K_3[U,V,W]$ which are in a homogeneous triad from $\calD$, and which are moreover ``correct" for their triad.  Note that for any $xyz\in E(\Gamma)$, we can decide whether or not $xyz$ is an edge in $H$ by looking at which triad $T_{uv}$ contains $xyz$ and then whether we have $p_uq_v\in F_0$ or $p_uq_v\in F_1$ in $\calG$.

Our next goal is to prove a consequence of the assumption that $\SVC(H)<k$.  To state this, we must define some auxiliary edge colors, which are obtained by grouping together the existing edge colors between $U$ And $V$, based on properties of $\calG$.   Given $1\leq v\leq s$, define 
\begin{align}\label{pdef}
P_{v}^1=\bigcup_{p_u\in N_{F_1}(q_v)}P_u,\text{       }P_{v}^0=\bigcup_{p_u\in N_{F_0}(q_v)}P_u,\text{ and }P_{v}^2=K_2[U,V]\setminus (P_{v}^1\cup P_{v}^0),
\end{align}
where we note that the sets $N_{F_1}(q_v)$ and $N_{F_0}(q_v)$ are computed in auxiliary structure $\calG$.  This yields a new bipartite edge-colored graph for each $v\in [r]$, namely
$$
G_{v}:=(U\cup V, P_{v}^0\cup P_{v}^1\cup P_{v}^2).
$$
We now show there are restrictions on copies of $U(k)$ appearing in such a $G_v$ (see Definition \ref{def:copies}). 
  
\begin{observation}\label{ob}
Suppose $1\leq v\leq s$, and $\{x_1,\ldots, x_k\}\cup \{y_S:S\subseteq [k]\}$ form a $P_{v}^1/P_{v}^0$-copy of $U(k)$ in $G_{v}$.  Then there DOES NOT exist $c\in W$ so that the following hold.
\begin{enumerate}[(a)]
\item $(\{x_1,\ldots, x_k\}\cup \{y_S:S\subseteq [k]\})\cap U\subseteq N_{Q_v}(c)$ and 
\item For all $1\leq i\leq k$ and $S\subseteq [k]$, $x_iy_S\in N_{\Gamma}(c)$.
\end{enumerate}
\end{observation}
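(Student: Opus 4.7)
The plan is to argue by contradiction: if some $c \in W$ satisfies (a) and (b), I will show that the set $\{x_1,\ldots,x_k\} \cup \{y_S : S \subseteq [k]\}$ is shattered in the slice graph $H_c$, contradicting the assumption $\SVC(H) < k$.

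Without loss of generality, assume $\{x_i\}_{i=1}^k \subseteq U$ and $\{y_S\}_{S \subseteq [k]} \subseteq V$; the other orientation (right side in $U$) is handled symmetrically. For each pair $(i,S)$, the objective is to read off whether $x_iy_Sc \in E(H)$ using only (a), (b), and the definitions of $\Gamma$, $P_v^1$, $P_v^0$. First, condition (a), combined with the definition $Q_v = \{xy \in K_2[U,W] : y \in C_x^v\}$, forces $c \in C_{x_i}^v$ for every $i$. Second, condition (b) unpacks as $x_iy_Sc \in E(\Gamma)$, so by the definition of $E(\Gamma)$ there is a unique pair $(u'', v'')$ with $p_{u''}q_{v''} \in F_0 \cup F_1$, $x_iy_S \in P_{u''}$, $x_ic \in Q_{v''}$, and $x_iy_Sc \in E(H)^{t(u'',v'')}$. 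Since $\{C_{x_i}^j\}_{j=0}^{s}$ partitions $W$, the first observation pins down $v'' = v$.

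Now I invoke the $P_v^1/P_v^0$-copy hypothesis. If $i \in S$ then $x_iy_S \in P_v^1 = \bigcup_{p_u \in N_{F_1}(q_v)} P_u$; since $P_{u''}$ is the unique color class containing $x_iy_S$, this gives $p_{u''} \in N_{F_1}(q_v)$, so $t(u'',v) = 1$ and hence $x_iy_Sc \in E(H)$. Symmetrically, if $i \notin S$ then $t(u'',v) = 0$ and $x_iy_Sc \notin E(H)$. Consequently, in $H_c$ the pair $\{x_i, y_S\}$ is an edge iff $i \in S$, which witnesses $\VC(H_c) \geq k$ and contradicts $\SVC(H) < k$.

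There is no substantive obstacle here; the argument is essentially bookkeeping inside the definition of $\Gamma$. The one place where care is needed is confirming that the ``$Q$-column'' index $v''$ extracted from the definition of $\Gamma$ coincides with the fixed $v$ of the observation, which is immediate from the fact that $\{C_{x_i}^j\}_j$ is a partition of $W$.
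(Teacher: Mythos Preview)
Your proof is correct and follows essentially the same approach as the paper: both argue by contradiction, fix the orientation of the $U(k)$-copy without loss of generality, and then unwind the definition of $\Gamma$ to deduce that $x_iy_Sc\in E(H)$ iff $i\in S$, contradicting $\SVC(H)<k$. Your version is slightly more explicit in pinning down the column index $v''=v$ via the partition $\{C_{x_i}^j\}_j$, which the paper leaves implicit.
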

\begin{proof}
Suppose towards a contradiction there exists some $c\in W$ so that (a) and (b) hold.  Without loss of generality, we may assume the  $P_{v}^1/P_{v}^0$-copy of $U(k)$ has right side in $V$, i.e. $x_1,\ldots, x_k\in N_{Q_v}(c)$ and $\{y_S:S\subseteq [k]\}\subseteq V$ (the other case is similar).  By assumption, we now have that for all $i\in [k]$ and $S\subseteq [k]$, $i\in S$ implies $x_iy_S\in P_{v}^1\cap N_{\Gamma}(c)$ and $i\notin S$ implies $x_iy_S\in P_{v}^0\cap N_{\Gamma}(c)$.  By definition of $\Gamma$, this means that for all $i\in S$,  $cx_iy_S\in E(H)$ and for all $i\notin S$, $cx_iy_S\notin E(H)$.  This contradicts that $H$ has slicewise VC-dimension less than $k$.
\end{proof}

The goal of the rest of the proof to create situations where we can use  Observation \ref{ob}.  We give an informal (and slightly inaccurate) outline of the steps we will take.  The reader may choose to skip the outline, and merely refer back to it for reference.

\begin{enumerate}[(I)]
\item Throw away edges and vertices which do not behave well with respect to  $\Gamma$. This is necessary for part (b) in  Observations \ref{ob}. 
\item Build a partition $\calX$ of $U$ so that for all $A\in \calX$, and any $1\leq v\leq s$, there exists some $c\in W$ so that $A\subseteq N_{Q_v}(c)$.  This will allow us to satisfy part (a) of Observation \ref{ob} when we restrict ourselves to $H[A\cup V\cup W]$ for each $A\in \calX$.
\item For each $A\in \calX$ and most $v\in [s]$, use Observation \ref{ob} to define a bipartite edge-colored graph  $(A\cup V,\mathbf{P}_{v,A}^0\cup \mathbf{P}_{v,A}^1\cup \mathbf{P}_{v,A}^2)$ so that $ \mathbf{P}_{v,A}^2$ is small, so that there is no $\mathbf{P}_{v,A}^0/\mathbf{P}_{v,A}^1$-copy of $U(k)$, and so that the triads of the form 
$$
(A\cup V\cup W, Q_v\cup \mathbf{P}_{v,A}^{0}\cup K_2[V,W])\text{ and }(A\cup V\cup W, Q_v\cup \mathbf{P}_{v,A}^{1}\cup K_2[V,W])
$$
are homogeneous with respect to $H$ in the sense of Definition \ref{def:triphom}.  
\item For each $A\in \calX$ and $v\in [s]$ as in the previous step, apply Proposition \ref{prop:vcremoval} to $(A\cup V, \mathbf{P}_{v,A}^0, \mathbf{P}_{v,A}^1, \mathbf{P}_{v,A}^2)$ to obtain vertex partitions $\calP^A_{v,A}$ of $A$ and $\calP_{v,A}^V$ of $V$ so that $(\calP_{v,A}^A,\calP_{v,A}^V)$ are homogeneous with respect to $\{\mathbf{P}_{v,A}^0, \mathbf{P}_{v,A}^1\}$ in the sense of Definition \ref{def:echom}.
\item For each $A\in \calX$, define a tripartite decompositions $\calD^A$ of $A\cup V\cup W$ as follows.  Let $\calD_W^A=\{W\}$ be the trivial partition, and let $\calD^A_A$ and $\calD_V^A$ be the partitions of $A$ and $V$ respectively, generated by taking the common refinements of the partitions of the form $\calP_{v,A}^A$ and $\calP_{v,A}^V$ from the preceding step, as $v$ ranges over most elements of $[s]$.  For each $X\in \calD_A^A$, this decomposition will include partitions of $K_2[X,W]$ roughly equal to $\{Q_v\cap K_2[X,W]: 1\leq v\leq s\}$. All other elements in $\calD^A_2$ are trivial partitions.  Show each $\calD^A$ is homogenous with respect to $H[A,V,W]$ in the sense of Definition \ref{def:triphom}.
\item Show each partition $\calD_V^A$ of $V$ from the previous step is almost good with respect to $H[A,V,W]$ in the sense of Definition \ref{def:almostgood}.  
\item Take the common refinement of the partitions $\calD_V^A$ of $V$,  as $A$ ranges over the elements in $\calX$.  Conclude the result is a partition of $V$ which is almost good with respect to $H$ using Lemma \ref{lem:refinement}.
\end{enumerate}

\vspace{2mm}

\noindent{\bf Step I.} In this step, we restrict our vertex set and adjust the edge colors to ensure good behavior with respect to $\Gamma$.  We first observe that by inequalities (\ref{a}) and (\ref{gammagood}), we have that  
\begin{align}\label{lbgamma}
|E(\Gamma)\cap K_3[U,V,W]|&\geq \sum_{p_uq_v\in F_0\cup F_1}|E(H)^{t(u,v)}\cap K_3^{(2)}(T_{uv})|\nonumber\\
&\geq (1-\e)\sum_{p_uq_v\in F_0\cup F_1}|K_3^{(2)}(T_{uv})|\nonumber\\
&\geq (1-\e)(|U||V||W|-|P_0||W|-|Q_0||V|)\nonumber\\
&\geq (1-3\e)|U||V|||W|.
\end{align}
Crucially, this means $\Gamma$ covers almost all triples from $K_3[U,V,W]$.  We will restrict our attention to vertices which are sufficiently well behaved with respect to $\Gamma$.  First, we set 
\begin{align*}
W_{good}&=\{c\in W: |N_{\Gamma}(c)|\geq (1-\e^{1/2})|U||V|\},\text{ and }\\
V_{good}&=\{b\in V:|N_{\Gamma}(b)|\geq (1-\e^{1/2})|U||W|\}.
\end{align*}
Inequality (\ref{lbgamma}) implies 
\begin{align}\label{al:vgoodwgood}
|V_{good}|\geq (1-3\e^{1/2})|V|\text{ and }|W_{good}|\geq (1-3\e^{1/2})|W|.
\end{align}
  We now define $G^{good}$ to be the tripartite graph with vertex set $U\cup V_{good}\cup W_{good}$, and edge set
\begin{align*}
E(G^{good})= &\{xy\in K_2[U,V_{good}]: |N_{\Gamma}(xy)|\geq (1-\e^{1/4})|W_{good}|\} \\
&\cup \{xz\in K_2[U,W_{good}]: |N_{\Gamma}(xz)|\geq (1-\e^{1/4})|V_{good}|\}.
\end{align*}
Using inequalities (\ref{lbgamma}) and (\ref{al:vgoodwgood}), standard arguments show
\begin{align}
|E(G^{good})\cap K_2[U,V_{good}]|&\geq (1-3\e^{1/4})|U||V|\text{ and }\label{al:good}\\
|E(G^{good})\cap K_2[U,W_{good}]|&\geq (1-3\e^{1/4})|U||W|.\label{al:good2}
\end{align}
We now collect the remaining the pairs outside $E(G^{good})$ and add them to $P_0,Q_0$ to make slightly larger ``error colors." In particular, let 
\begin{align*}
Q^{bad}=Q_0\cup \Big(K_2[U,W]\setminus E(G^{good})\Big)\text{ and }P^{bad}=P_0\cup \Big(K_2[U,V]\setminus E(G^{good})\Big).
\end{align*}
By inequalities (\ref{a}), (\ref{al:good}), and (\ref{al:good2}),
\begin{align}\label{error}
|Q^{bad}|\leq 4\e^{1/4}|U||W|\text{ and }|P^{bad}|\leq 4\e^{1/4}|U||V|.
\end{align}
  We now restrict ourselves to vertices in $U$ which mostly avoid $Q^{bad}$ and $P^{bad}$.  Let
$$
U_{good}=\{x\in U: |N_{Q^{bad}}(x)|\leq \e^{1/8}|W_{good}|\text{ and }|N_{P^{bad}}(x)|\leq \e^{1/8}|V_{good}|\}.
$$
By inequality (\ref{error}) and standard arguments, $|U_{good}|\geq (1-4\e^{1/8})|U|$.  Finally, we will restrict the edge colors between $U$ and $W$ to the ``good" pairs.  Specifically, for each $1\leq v\leq s$, let 
\begin{align*}
Q_v^{good}&=Q_v\cap K_2[U,W]\cap E(G^{good}).
\end{align*}
Note that by construction, $K_2[U,W]=Q^{bad}\cup Q_1^{good}\cup \ldots \cup Q_s^{good}$.  This finishes Step I.

\vspace{2mm}

\noindent{\bf Step II.} Our next goal is to partition most of $U_{good}$ into pieces that are covered by neighborhoods of the form $Q^{good}_v(c)$ for some $c\in W_{good}$.   We will do this for each edge color $Q_v^{good}$ individually and then intersect the resulting partitions.  

Let $\delta=\e^{1/2}/2\ell$. For each $1\leq v\leq s$, apply  Lemma \ref{lem:cover}, to $(U_{good}\cup W_{good},Q_v^{good})$ to obtain an integer $0\leq K_{v}\leq \delta^{-1/2}$ and a partition $\calX_{Q_v}=\{X_v(err), X_{v}(0),\ldots, X_{v}(K_{v})\}$ of $U_{good}$ so that the following hold.
\begin{enumerate}[(i)]
\item $|X_v(err)|\leq \delta^{1/4}|U_{good}|$,
\item For each $1\leq j\leq K_v$, there is some $c_{v,j}\in W_{good}$ so that $X_v(j)\subseteq N_{Q^{good}_v}(c_{v,j})$ and $|X_v(j)|=\delta^{1/2}|U_{good}|$,
\item For all $x\in X_v(0)$, $|N_{Q^{good}_v}(x)\cap W_{good}|\leq \delta^{1/4}|W_{good}|$.
\end{enumerate}

Let  $\calX$ be the partition of $U_{good}$ obtained by taking the common refinement the partitions of the form $\calX_{Q_v}$ for $v\in [s]$.  Note
\begin{align}\label{pui}
|\calX|\leq (\delta^{-1/2}+2)^{s}\leq (\delta^{-1})^s\leq \delta^{-\ell}\leq (2\ell)^{\ell}\e^{-\ell}.
\end{align}
We now collect certain elements from $\calX$ which we need to throw away. First, set
\begin{align*}
\calX^{err}=\{A\in \calX: |A|\leq \e |U_{good}|/|\calX|\}&\cup \{A\in \calX: \text{ for some }v\in [s], A\subseteq X_v(err)\}.
\end{align*}
Now let 
\begin{align}\label{al:udef}
U_{err}=\Big(U\setminus U_{good}\Big)\cup \Big(\bigcup_{A\in \calX^{err}}A\Big)\text{ and }U_{good}'=U\setminus U_{err}.
\end{align}
Since $|U_{good}|\geq (1-\e^{1/8})|U|$, and by definition of $\calX_{err}$, we have the following upper bound on $|U_{err}|$.
\begin{align*}
|U_{err}|\leq \e^{1/8}|U|+\e |U|+\sum_{v\in [s]}|X_v(err)|\leq 2\e^{1/8}|U|+\ell \delta^{1/4}|U|\leq 3\e^{1/8} |U|,
\end{align*}
where the last inequality is by definition of $\delta$.  This finishes Step II.

\vspace{2mm}

\noindent{\bf Step III.} We now prove a structure theorem for $H[A,V,W]$ for each $A\in \calX\setminus \calX_{err}$.  We do this in Claim \ref{cl:a} below.  Roughly speaking, Claim \ref{cl:a} says we can group together edge colors to make several homogeneous decompositions for $H[A,V,W]$, one for most elements of $[s]$.

\begin{claim}\label{cl:a}
For all $A\in \calX\setminus \calX_{err}$, there exist the following.
\begin{itemize}
\item A set $\calS^A\subseteq [s]$,
\item A partition $K_2[A,W]=\mathbf{Q}^A_0\cup \bigcup_{v\in \calS^A}\mathbf{Q}^A_v$,
\item For each $v\in \calS^A$, a partition 
$$
K_2[A,V]=\mathbf{P}_{A,v}^0\cup \mathbf{P}_{A,v}^1\cup \mathbf{P}_{A,v}^2,
$$
\end{itemize}
so that the following hold.
\begin{enumerate}
\item For all $x\in A$, $|N_{\mathbf{Q}^A_0}(x)|\leq 5\e^{1/8}|W|$, 
\item For all $v\in \calS^A$ and $x\in A$, $|N_{\mathbf{P}_{A,v}^2}(x)|\leq 4\e^{1/16}|V|$,  
\item For all $v\in \calS^A$, there is no $\mathbf{P}_{A,v}^1/\mathbf{P}_{A,v}^0$-copy of $U(k)$ in $(A\cup V, \mathbf{P}_{A,v}^0,\mathbf{P}_{A,v}^1,\mathbf{P}_{A,v}^2)$,
\item Given $v\in \calS^A$ and $\tau\in \{0,1,2\}$, let $T(\mathbf{P}_{A,v}^{\tau},\mathbf{Q}^A_v)$ denote the triad
$$
T(\mathbf{P}_{A,v}^{\tau},\mathbf{Q}^A_v):=(A\cup W\cup V, \mathbf{P}_{A,v}^{\tau}\cup \mathbf{Q}^A_v\cup K_2[V,W]).
$$
Then $T(\mathbf{P}_{A,v}^{\tau},\mathbf{Q}^A_v)$ is $2\e^{1/16}$-homogeneous with respect to $H$ whenever $\tau\neq 2$ and $\mathbf{P}_{A,v}^{\tau}\neq \emptyset$.
\end{enumerate}
\end{claim}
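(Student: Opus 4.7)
The plan is to construct the required objects by combining the partitions $\calX_{Q_v}$ obtained in Step II (one per slice $Q_v^{good}$) with the edge-coloring of the auxiliary graph $\calG$ read at each $q_v$, then restricting $V$-edges to the $\Gamma$-neighborhood of a witness vertex $c^A_v\in W_{good}$. Concretely, let $\calS^A\subseteq[s]$ be the set of $v\in[s]$ for which $A\subseteq X_v(j_{v,A})$ for some $j_{v,A}\geq 1$; for such $v$, set $c^A_v:=c_{v,j_{v,A}}\in W_{good}$, so $A\subseteq N_{Q_v^{good}}(c^A_v)$. Put $\mathbf{Q}_v^A:=Q_v\cap K_2[A,W]$ for $v\in\calS^A$ and $\mathbf{Q}_0^A:=K_2[A,W]\setminus\bigcup_{v\in\calS^A}\mathbf{Q}_v^A$. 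For each $v\in\calS^A$ and $\tau\in\{0,1\}$, let $R_\tau^v:=\{u\in[r]:p_uq_v\in F_\tau\}$; if $|R_\tau^v|\geq 8\e^{1/4}\ell$, define
\[
\mathbf{P}_{A,v}^\tau:=\Big(\bigcup_{u\in R_\tau^v}P_u\Big)\cap K_2[A,V]\cap N_\Gamma(c^A_v),
\]
and otherwise set $\mathbf{P}_{A,v}^\tau:=\emptyset$; put $\mathbf{P}_{A,v}^2:=K_2[A,V]\setminus(\mathbf{P}_{A,v}^0\cup\mathbf{P}_{A,v}^1)$.

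Property (1) follows by bounding the $\mathbf{Q}_0^A$-neighborhood of each $x\in A$ through four pieces: the $C_x^0$-part (bounded by $\e|W|$ via hypothesis (\ref{sl:4})); the $W\setminus W_{good}$-part (bounded via (\ref{al:vgoodwgood})); the $Q^{bad}$-part (bounded by $\e^{1/8}|W|$ since $x\in U_{good}$); and the sum $\sum_{v\notin\calS^A}|N_{Q_v^{good}}(x)|$, where each term is at most $\sqrt{\delta}|W|$ because $A\subseteq X_v(0)$ for every such $v$ (using $A\notin\calX^{err}$ to rule out $A\subseteq X_v(err)$). With $\delta=\e^{1/2}/(2\ell)$ and $\e$ small compared to $1/\ell$ one has $\ell\sqrt{\delta}\leq\e^{1/8}$, whence the $5\e^{1/8}|W|$ bound. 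Property (2) is verified analogously by summing (i) the $P_0$-contribution $\leq\e|V|$ (hypothesis (\ref{sl:4})), (ii) the $u\geq 1$ part of $P_v^2\cap K_2[A,V]$ bounded by $2\e|V|$ using hypotheses (\ref{sl:3}) and (\ref{sl:5}), (iii) the piece outside $N_\Gamma(xc^A_v)$ bounded by $4\e^{1/4}|V|$ (which follows from $xc^A_v\in E(G^{good})$, $c^A_v\in W_{good}$, and (\ref{al:vgoodwgood})), and (iv) at most $8\e^{1/4}|V|$ per threshold-excluded color.

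Property (3) is an immediate consequence of Observation \ref{ob}: any $\mathbf{P}_{A,v}^1/\mathbf{P}_{A,v}^0$-copy of $U(k)$ would in particular yield a $P_v^1/P_v^0$-copy of $U(k)$ in $G_v$ whose $U$-side vertices all lie in $A\subseteq N_{Q_v}(c^A_v)$ and each of whose edges lies in $N_\Gamma(c^A_v)$, giving the forbidden configuration. The main technical step is property (4). Fix $v\in\calS^A$ and $\tau\in\{0,1\}$ with $\mathbf{P}_{A,v}^\tau\neq\emptyset$, so in particular $|R_\tau^v|\geq 8\e^{1/4}\ell$. Applying hypothesis (\ref{sl:2}) to each slice graph $H_x$ with $x\in A$: for each $u\in R_\tau^v$ the pair $(B_x^u,C_x^v)$ admits at most $\e|V||W|/\ell^2$ exceptions to lying in $E(H_x)^\tau$, summing over $u\in R_\tau^v$ and $x\in A$ to at most $|A||R_\tau^v|\e|V||W|/\ell^2$ bad triples in $K_3^{(2)}(T(\mathbf{P}_{A,v}^\tau,\mathbf{Q}_v^A))$. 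For the denominator, the disjointness of the $B_x^u$ together with $|V\setminus N_\Gamma(xc^A_v)|\leq 4\e^{1/4}|V|$ gives $|N_{\mathbf{P}_{A,v}^\tau}(x)|\geq|R_\tau^v||V|/\ell-4\e^{1/4}|V|\geq|R_\tau^v||V|/(2\ell)$, where the threshold absorbs the error term, and hence $|K_3^{(2)}(T(\mathbf{P}_{A,v}^\tau,\mathbf{Q}_v^A))|\geq|A||R_\tau^v||V||W|/(2\ell^2)$. The ratio is therefore at most $2\e$, comfortably within $2\e^{1/16}$. The obstacle being sidestepped is precisely that without the threshold, a small $|R_\tau^v|$ would force both numerator and denominator to be small in a ratio that cannot be controlled by hypothesis (\ref{sl:2}); dumping those colors into $\mathbf{P}_{A,v}^2$ costs only the $8\e^{1/4}|V|$ summand already accounted for in property (2).
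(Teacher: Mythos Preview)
Your construction is correct and follows the same overall architecture as the paper's proof: define $\calS^A$ via the Step~II covers, choose witnesses $c^A_v\in W_{good}$ with $A\subseteq N_{Q_v^{good}}(c^A_v)$, read off the $\mathbf{P}$-colors from the $F_\tau$-neighborhoods of $q_v$ in $\calG$, intersect with $N_\Gamma(c^A_v)$, and appeal to Observation~\ref{ob} for (3). Two small expository points deserve tightening. First, your itemized breakdown for (1) lists the ``$Q^{bad}$-part'' and ``$W\setminus W_{good}$-part'', but with your choice $\mathbf{Q}_v^A=Q_v\cap K_2[A,W]$ (rather than the paper's $Q_v^{good}\cap K_2[A,W]$) the set $\mathbf{Q}_0^A$ is exactly $N_{Q_0}(x)\cup\bigcup_{v\notin\calS^A}N_{Q_v}(x)$; the bound still holds, but the bookkeeping should be rewritten to match your definition. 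Second, for (2) you must note that at most one of $|R_0^v|,|R_1^v|$ can fall below your threshold $8\e^{1/4}\ell$: since $|B_x^0|\leq\e|V|$ forces $r\geq(1-\e)\ell$, having both small would give $r\leq|R_0^v|+|R_1^v|+|N_{F_2}(q_v)|<17\e^{1/4}\ell$, a contradiction for small $\e$. Without this, both colors excluded would make $\mathbf{P}_{A,v}^2=K_2[A,V]$ and (2) would fail.

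Your argument for (4) is genuinely cleaner than the paper's. The paper keeps $\mathbf{Q}_v^A\subseteq Q_v^{good}\subseteq E(G^{good})$ and then, for each $xw\in\mathbf{Q}_v^A$, uses $|V\setminus N_\Gamma(xw)|\leq\e^{1/4}|V|$ together with the lower bound $|N_{\mathbf{P}_{A,v}^\tau}(x)|\geq\e^{1/16}|V|/2$ to absorb the error inside $\Gamma$; this routes the homogeneity through the global object $\Gamma$ and yields roughly a $2\e^{3/16}$ density deviation. You instead take $\mathbf{Q}_v^A=Q_v\cap K_2[A,W]$ so that $N_{\mathbf{Q}_v^A}(x)=C_x^v$ exactly, and then count bad triples directly via hypothesis~(\ref{sl:2}): at most $\e|V||W|/\ell^2$ exceptions per $(x,u)$ with $u\in R_\tau^v$, against a denominator of at least $|R_\tau^v||V||W|/(2\ell^2)$ per $x$ once the threshold absorbs the $N_\Gamma(c^A_v)$ loss. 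This gives a $2\e$ deviation with no detour through $\Gamma$. The paper's route has the conceptual advantage of keeping all triads inside the $\Gamma$-framework used in later steps, but for the claim as stated your direct count is both shorter and sharper.
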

\begin{proof}
Fix $A\in \calX\setminus \calX_{err}$.  We begin by defining  the distinguished sets of edge colors $\calS^A$ for $A$.  In particular, define
\begin{align*}
&\calS^A=\{v\in [s]: A\cap X_{v}(0)=\emptyset\}\text{ and }\calS^A_{err}=\{v\in [s]: A\subseteq X_{v}(0)\}.
\end{align*}
Since $A\notin \calX^{err}$, we have $\calS^A\cup \calS^A_{err}=[s]$.  For each $v\in \calS^A$, define
\[
\mathbf{Q}^A_v= Q_v^{good}\cap K_2[A,W],
\]
and then let $\mathbf{Q}_0^A=K_2[A, W]\setminus(\bigcup_{v\in [s]}\mathbf{Q}_v^A)$. Using that $A\subseteq U_{good}'$ (recall its definition from (\ref{al:udef})), the definition of $\calS^A_{err}$, and property (iii) from the construction of the sets $X_v(0)$ in Step II, we have that for all $x\in A$, 
\begin{align}\label{c}
|N_{\mathbf{Q}^A_0}(x)|&\leq |N_{Q^{bad}}(x)|+|\sum_{v\in \calS^A_{err}}N_{Q^{good}_v}(x)\cap W_{good}|+|W\setminus W_{good}|\nonumber\\
&\leq \e^{1/8} |W|+|\calS^A_{err}|\delta^{1/4}|W|+3\e^{1/2}|W|\nonumber\\
&\leq (4\e^{1/8}+\ell\delta^{1/4})|W|\nonumber\\
&\leq 5\e^{1/8}|W|,
\end{align}
where the last inequality is by definition of $\delta$.  Thus we have shown conclusion (1) of Claim \ref{cl:a}.  Note this implies $|\mathbf{Q}_0^A|\leq 5\e^{1/8}|A||W|$, and therefore, we now know that most of $K_2[A,W]$ is covered by the sets of the form $\mathbf{Q}^A_v$ for $v\in \calS^A$.

Our next goal is to define the edge colors $\mathbf{P}_{v,A}^0,\mathbf{P}_{v,A}^1,\mathbf{P}_{v,A}^2$ for each $v\in \calS^A$.  By definition of $\calS^A$, we have that for each $v\in \calS^A$, $A\subseteq X_v(j_v)\subseteq N_{Q_v^{good}}(c_{v,j_v})$ for $1\leq j_v\leq K_v$.  We let $\calC^A$ denote the set of these vertices $c_{v,j_v}$, i.e. define
\begin{align*}
\calC^A:=\{c_{v,j_v}: v\in \calS^A \}.
\end{align*}
We now define two special subsets of $\calC^A$ based on behavior in the auxiliary structure $\calG$.
\begin{align*}
\calC^A_0=\{c_{v,j_v}\in \calC^A: |N_{F_1}(q_v)|\leq \e^{1/16}r\},\text{ and }\calC^A_1=\{c_{v,j_v}\in \calC^A: |N_{F_0}(q_v)|\leq \e^{1/16} r\}.
\end{align*}
We do not expect $\calC^A_0\cup \calC_1^A$ to necessarily cover all of $\calC^A$.  Recall that by hypothesis (\ref{sl:5}), we know that for every $v\in [s]$, $|N_{F_2}(q_v)|\leq \e r$.  Consequently, $\calC^A_0\cap \calC^A_1=\emptyset$.  We will next define the partition $K_2[A,V]=\mathbf{P}_{v,A}^0\cup \mathbf{P}_{v,A}^1\cup \mathbf{P}_{v,A}^2$ for each $v\in \calS^A$.  How we define this partition depends on whether $c_{v,j_v}$ is in one of $\calC^A_1$, $\calC^A_0$, or neither (cases (a), (b), and (c) below).  This definition will also make use of the edge-colors $P_v^0,P_v^1,P_v^2$ which were defined in displayed equality (\ref{pdef}).
\begin{enumerate}[(a)]
\item If $c_{v,j_v}\in \calC^A_1$, let $\mathbf{P}_{v,A}^0=\emptyset$, and define
\begin{align*}
\mathbf{P}_{v,A}^1&=P_v^1\cap E(G^{good})\cap N_{\Gamma}(c_{v,j_v}),\text{ and }\\
\mathbf{P}_{v,A}^2&=K_2[A,V]\setminus \mathbf{P}_v^1.
\end{align*}
\item If $c_{v,j_v}\in \calC^A_0$, let $\mathbf{P}_{v,A}^1=\emptyset$ and define
\begin{align*}
\mathbf{P}_{v,A}^0&=P_v^0\cap E(G^{good})\cap N_{\Gamma}(c_{v,j_v}),\text{ and }\\
\mathbf{P}_{v,A}^2&=K_2[A,V]\setminus \mathbf{P}_v^0.
\end{align*}
\item If $c_{v,j_v}\in \calC^A\setminus (\calC^A_0\cup \calC^A_1)$, let 
\begin{align*}
\mathbf{P}_{v,A}^0&=P_v^0\cap E(G^{good})\cap N_{\Gamma}(c_{v,j_v}),\\
\mathbf{P}_{v,A}^1&=P_v^1\cap E(G^{good})\cap N_{\Gamma}(c_{v,j_v}),\text{ and }\\
\mathbf{P}_{v,A}^2&=K_2[A,V]\setminus \Big(\mathbf{P}_{v,A}^1\cup \mathbf{P}_{v,A}^0\Big).
\end{align*}
\end{enumerate}
To show these definitions work to satisfy the conclusions of Claim \ref{cl:a}, we will need to compute the sizes of the edge colors above. For this, we will use the the following fact.  By definition of $P^{\tau}_v$ (see equality (\ref{pdef})) and hypothesis (\ref{sl:4}), we have that for all $x\in A$, $v\in \calS^A$ and $\tau\in \{0,1,2\}$,
\begin{align}\label{al:size}
|N_{P^{\tau}_{v}}(x)|=|N_{F_{\tau}}(q_v)||V|/\ell,
\end{align}
where $N_{F_{\tau}}(q_v)$ is computed in the auxiliary structure $\calG$.  We show that in all three cases, (a), (b), and (c)m conclusion (2) of Claim \ref{cl:a} holds.  Suppose first we are in case (a).  Then  by construction we have that for all $x\in A$,
\begin{align*}
|N_{\mathbf{P}_v^2}(x)|&\leq |V\setminus N_{\Gamma}(xc_{v,j_v})|+|N_{P^{bad}}(x)|+\sum_{p_u\in N_{F_0}(q_v)\cup N_{F_2}(q_v)}|N_{P_u}(x)|\\
&\leq |V\setminus N_{\Gamma}(xc_{v,j_v})|+|N_{P^{bad}}(x)|+|N_{F_0}(q_v)\cup N_{F_2}(q_v)||V|/\ell,
\end{align*}
where the last inequality is by (\ref{al:size}).  Since $A\subseteq Q_v^{good}(c_{v,j_v})$, we have that for all $x\in A$,
$$
|V\setminus N_{\Gamma}(xc_{v,j_v})|\leq 3\e^{1/4}|V|.
$$
Since $A\subseteq U_{good}'$, we have for all $x\in A$, $|N_{P^{bad}}(x)|\leq \e^{1/8} |V_{good}|$.  Since we are in case (a), $c_{v,j_v}\in \calC_A^1$ implies $|N_{F_0}(q_v)|\leq \e^{1/16}r$.  Hypothesis (\ref{sl:5}) implies $|N_{F_2}(q_v)|\leq \e r$.  Combining these facts, we have that
\begin{align*}
|N_{\mathbf{P}_v^2}(x)|&\leq  3\e^{1/4}|V|+\e^{1/8}|V_{good}|+\e r |V|/\ell +\e^{1/16}r|V|/\ell\leq 4\e^{1/16}|V|,
\end{align*}
where the last inequality uses that $r\leq \ell$.  A similar computation shows that in case (b), we have for all $x\in A$, $|N_{\mathbf{P}_v^2}(x)|\leq 4\e^{1/16}|V|$.  Suppose now we are in case (c).  Then via a similar argument to the above, we have that for all $x\in A$,
\begin{align*}
|N_{\mathbf{P}_{v,A}^2}(x)|\leq |V\setminus N_{\Gamma}(xc_{v,j_v})|+|N_{P^{bad}}(x)|+|N_{F_2}(q_v)||V|/\ell&\leq 3\e^{1/4}|V|+\e^{1/8}|V|+\e r|V|/\ell\\
&\leq 4\e^{1/16}|V|.
\end{align*}
This finishes our verification of conclusion (2) of Claim \ref{cl:a}. 

We next verify conclusion (3) of Claim \ref{cl:a}.  Let $v\in \calS^A$.  We want to show  there is no $\mathbf{P}_{v,A}^0/\mathbf{P}_{v,A}^1$-copy of $U(k)$ in $(A\cup V, \mathbf{P}_{v,A}^0,\mathbf{P}_{v,A}^1,\mathbf{P}_{v,A}^2)$. If we are in cases (a) or (b), then $c_{v,j_v}\in \calC^A_0\cup \calC^A_1$ implies one of $\mathbf{P}_{v,A}^0$ or $\mathbf{P}_{v,A}^1$ is empty, so there is clearly no $\mathbf{P}_{v,A}^0/\mathbf{P}_{v,A}^1$ copy of $U(k)$ in $(A\cup V, \mathbf{P}_{v,A}^0,\mathbf{P}_{v,A}^1,\mathbf{P}_{v,A}^2)$.  

Assume now we are in case (c), so $c_{v,j_v}\in \calC^A\setminus (\calC^A_0\cup \calC^A_1)$.  Suppose towards a contradiction there is a $\mathbf{P}_{v,A}^0/\mathbf{P}_{v,A}^1$ copy of $U(k)$ in $(A\cup V, \mathbf{P}_{v,A}^0,\mathbf{P}_{v,A}^1,\mathbf{P}_{v,A}^2)$.  This means there exist vertices $\{x_1,\ldots, x_{k}\}\cup \{y_S: S\subseteq [k]\}\subseteq A\cup V$ such that  for all $1\leq j\leq k$ and $S\subseteq [k]$, if $j\in S$, then $x_jy_S\in P_v^{1}\cap N_{\Gamma}(c_{v,j_v})$ and if $j\notin S$, then $x_jy_S\in P_{v}^{0}\cap N_{\Gamma}(c_{v,j_v})$.  However, since $A\subseteq N_{Q_v}(c_{v,j_v})$, this contradicts Observation \ref{ob}.  This finishes our verification of conclusion (3) of Claim \ref{cl:a}.

We just have left to show conclusion (4) of Claim \ref{cl:a}.  For each $v\in\calS^A$  and $\tau\in \{0,1,2\}$ define the triad
\begin{align*}
T(\mathbf{P}_{v,A}^{\tau},\mathbf{Q}^A_v)=(A\cup V\cup W,  \mathbf{P}_{v,A}^{\tau}\cup \mathbf{Q}_v^A\cup K_2[V,W]).
\end{align*}
Fix $v\in \calS^A$ and $\tau\in \{0,1\}$, and assume $\mathbf{P}_{v,A}^{\tau}\neq \emptyset$.  We want to show $T(\mathbf{P}_{v,A}^{\tau},\mathbf{Q}^A_v)$ is $2\e^{1/16}$-homogeneous with respect to $H$.    Since $A\subseteq Q_v^{good}(c_{v,j_v})$, we have that for all $x\in A$, $xc_{v,j_v}\in E(G^{good})$, and consequently,
$$
|N_{\Gamma}(c_{v,j_v})\cap K_2[\{x\},V]|\geq (1-\e^{1/4})|V|.
$$
Combining this with the fact $A\subseteq U_{good}'$, we have that for all $x\in A$,
\begin{align}\label{lb1}
|N_{{\bf P}_{v,A}^{\tau}}(x)|\geq |N_{P_{v,A}^{\tau}}(x)|-|N_{P^{bad}}(x)|-|V\setminus N_{\Gamma}(xc_{v,j_v})|&\geq |N_{F_{\tau}}(q_v)||V|/\ell-\e^{1/8}|V_{good}|-\e^{1/4}|V|\nonumber\\
&\geq \frac{\e^{1/16}r|V|}{\ell}-3\e^{1/8}|V|\nonumber\\
&\geq \e^{1/16}|V|/2,
\end{align}
where the  inequalities use (\ref{al:size}), the fact that $\mathbf{P}_{v,A}^{\tau}\neq \emptyset$ implies $|N_{F_{\tau}}(q_v)|\geq \e^{1/16}r$, and the fact that hypothesis (\ref{sl:2}) implies $r\geq (1-\e)\ell$.  We now have the following, where we recall $E(H)^1=E(H)$ and $E(H)^0={V(H)\choose 3}\setminus E(H)$.
\begin{align*}
|E(H)^{\tau}\cap K_3^{(2)}(T(\mathbf{P}_{v,A}^{\tau},\mathbf{Q}^A_v))|&\geq \sum_{x\in A} |N_{\Gamma}(x)\cap K_2[N_{\mathbf{P}_{v,A}^{\tau}}(x),N_{\mathbf{Q}^A_v}(x)]|\\
&=\sum_{x\in A}\Big(\sum_{w\in N_{\mathbf{Q}_v^A}(x)}|N_{\Gamma}(xw)\cap N_{\mathbf{P}_{v,A}^{\tau}}(x)|\Big) \\
&\geq \sum_{x\in A}\Big(\sum_{w\in N_{\mathbf{Q}_v^A}(x)}(|N_{\mathbf{P}_{v,A}^{\tau}}(x)|-|V\setminus N_{\Gamma}(xw)|)\Big)\\
&\geq \sum_{x\in A}\Big(\sum_{w\in N_{\mathbf{Q}_v^A}(x)}(|N_{\mathbf{P}_{v,A}^{\tau}}(x)|- \e^{1/4}|V|)\Big),
\end{align*}
where the last inequality uses $\mathbf{Q}_v^A\subseteq Q_v^{good}$.  Using (\ref{lb1}), this is at least 
\begin{align*}
\sum_{x\in A}\sum_{w\in N_{\mathbf{Q}_v^A}(x)}|N_{\mathbf{P}_{v,A}^{\tau}}(x)|(1- 2\e^{4})&=(1-2\e^{4})\sum_{x\in A}|N_{\mathbf{Q}_v^A}(x)||N_{\mathbf{P}_{v,A}^{\tau}}(x)|\\
&=(1-2\e^{4})|K_3^{(2)}(T(\mathbf{P}_{v,A}^{\tau},\mathbf{Q}^A_v))|,
\end{align*}
where the last inequality uses (\ref{lb1}).   This finishes our verification of conclusion (4), and thus we have finished proving Claim \ref{cl:a}. 
\end{proof}

\vspace{2mm}

\noindent{\bf Step IV.} In this step, we will apply Proposition \ref{prop:vcremoval} to the edge colored graphs constructed in Claim \ref{cl:a}.  Indeed, for each $A\in \calX\setminus \calX_{err}$, by Claim \ref{cl:a}, we can apply  Proposition \ref{prop:vcremoval} with $\e'=4\e^{1/16}$ and $\delta'=8(8c^4\e')^{1/4k+4}$.  This yields a partition  $\calP_{A,v}^V$ of $V$ and $\calP_{A,v}^A$ of $A$ so that the following hold, where $\mu=8(16 \e^{1/16}c^4)^{1/4k+4}$.
\begin{itemize}
\item $\max\{|\calP_{A,v}^V|,|\calP_{A,v}^A|\}\leq 2c(\mu/8)^{-k}$,
\item There are sets $\Sigma^0_{A,v},\Sigma^1_{A,v}\subseteq \calP_{A,v}^A\times \calP_{A,v}^V$ so that the following hold.
\begin{itemize}
\item $\bigcup_{(X,Y)\in \Sigma^0_{A,v}\cup \Sigma^1_{A,v}}(X\times Y)]|\geq (1-2\mu^{1/16})|A||V|$ 
\item For each $\tau\in \{0,1\}$ and $(X,Y)\in \Sigma^{\tau}_{A,v}$,  
$$
|\mathbf{P}_{v,A}^{\tau}\cap K_2[X,Y]| \geq (1-2\mu^{1/16})|X||Y|.
$$
\end{itemize}
\end{itemize}

\vspace{2mm}

\noindent{\bf Step V.} 
In this step, we will define a $(A,V,W)$-tripartite decomposition $\calD^A$ of $A\cup V\cup W$ for each $A\in \calX\setminus \calX_{err}$, which we will then show is homogeneous with respect to $H[A,V,W]$ in the sense of Definition \ref{def:triphom}.  Fix $A\in \calX\setminus \calX_{err}$.  We begin by defining the tripartite $(A,V,W)$-decomposition $\calD^A$ of $A\cup V\cup W$.  First, let 
$$
\calD^A_1=(\calD_{A}^A,\calD_A^V,\calD_A^W),
$$
where $D^A_W=\{W\}$, and $\calD^A_A$ and $\calD^A_V$ are the partitions of $A$ and $V$ obtained by taking the common refinements of the partitions $\calP_{v,A}^A$ and $\calP_{v,A}^V$, respectively, as $v$ ranges over $\calS^A$.   Observe that 
$$
|\calD_V^{A}|\leq \prod_{v\in \calS^A}|\calP_{v,A}^V|\leq (2c(\mu/8)^{-k})^{\ell}.
$$
We now define $\calD_2^A$.  For $X\in \calD_A^A$ and $Y\in \calD_V^A$, let $\calD_{XY}^A= \{K_2[X,Y]\}$.  For $Y\in \calD_V^A$, let $\calD_{YW}^A=\{K_2[Y,W]\}$.  Finally, for each $X\in \calD_A^A$, define 
$$
\calD^A_{XW}=\{\mathbf{Q}^A_v\cap K_2[X,W]: v\in \{0\}\cup \calS^A\}.
$$
We then let $\calD_2^A=\{\calD_{XY}^A, \calD_{YW}^A, \calD_{XW}^A: X\in \calD_A^A, Y\in \calD_V^A\}$.   We have now defined a tripartite $(A,V,W)$-decomposition $\calD^A=(\calD^A_1,\calD^A_2)$. 

Our next goal is to show $\calD^A$ is $3\mu^{1/32}$-homogeneous with respect to $H[A,V,W]$, in the sense of Definition \ref{def:triphom}.  The triads of $\calD^A$ have the following form, for some $X\in \calD_A^A$, $Y\in \calD_A^V$, and $v\in \{0\}\cup \calS^A$:
$$
T_{A,v}(X,Y):=(X\cup Y\cup W, K_2[Y,W]\cup (\mathbf{Q}_v^A\cap K_2[A,W])\cup K_2[X,Y]).
$$
Note that by construction, this gives us a partition
$$
K_3[A,V,W]=\bigcup_{v\in \{0\}\cup \calS^A}\bigcup_{X\in \calP_{A,v}^A}\bigcup_{Y\in \calP_{A,v}^V}K^{(2)}_3(T_{A,v}(X,Y)).
$$
 We show that most triples in $K_3[A,V,W]$ are covered by a \emph{homogeneous} triad of the form $T_{A,v}(X,Y)$.  The rough idea is to show that most such triads inherit  homogeneity from a triad of the form $T(\mathbf{P}_{v,A}^{\tau},\mathbf{Q}_v^A)$, for some $v\in \calS^A$ and $\tau\in \{0,1\}$ (recall from Claim \ref{cl:a} that all such triads are $2\e^{1/16}$-homogeneous with respect to $H$).

Our next several definitions are aimed at defining a set of triads from $\calD^A$ which we will show are  homogeneous.  First, define
$$
\Gamma_A=\bigcup_{v\in \calS^A}\bigcup_{\tau\in \{0,1\}}\Big(E(H)^{\tau}\cap K_3^{(2)}(T(\mathbf{P}_{v,A}^{\tau},\mathbf{Q}^A_v))\Big).
$$
In other words, $\Gamma_A$ is the set of triples $xyz\in K_3[A,V,W]$ which come from a homogeneous triad of the form $T(\mathbf{P}_{v,A}^{\tau},\mathbf{Q}_v^A)$, for some $v\in \calS^A$ and $\tau\in \{0,1\}$, and so that $xyz$ is  ``correct" with respect to this triad.  We now define the set of triads from $\calD^A$ which are mostly covered by $\Gamma_A$.
$$
\calT_A=\{G\in Tri(\calD^A): |\Gamma_A\cap K_3^{(2)}(G)|\geq (1-10\e^{1/32})|K_3^{(2)}(G)|\}.
$$
This set will be useful because, if $G\in \calT_A$ and $G$ is mostly contained in a \emph{single} triad of the form  $T(\mathbf{P}_{v,A}^{\tau},\mathbf{Q}_v^A)$ with $v\in \calS^A$ and $\tau\in \{0,1\}$, then we can show $G$ is also somewhat homogeneous. We now work towards making this precise.  First, given $v\in \calS^A$, define 
\begin{align*}
\Omega_{A,v}^1&=\{(X,Y)\in \calD_A^A\times \calD_V^A: |\mathbf{P}_{v,A}^{1}\cap K_2[X,Y]| \geq (1-\mu^{1/32})|X||Y|\}\text{ and }\\
\Omega_{A,v}^0&=\{(X,Y)\in \calD_A^A\times \calD^A_V: |\mathbf{P}_{v,A}^{0}\cap K_2[X,Y]| \geq (1-\mu^{1/32})|X||Y|\}.
\end{align*}
Since $\calP^A_{v,A}\cup \calP_{v,A}^V$ is a $2\mu^{1/16}$-homogeneous partition of $(A\cup V,{\bf P}_v^0,{\bf P}_v^1, {\bf P}_v^2)$ (see Definition \ref{def:echom}), we have by Lemma \ref{lem:averaging} and the definitions of $\Omega^0_{A,v}, \Omega^1_{A,v}$ that for each $v\in \calS^A$, 
\begin{align}\label{al:omega}
|\bigcup_{(X,Y)\in \Omega^0_{A,v}\cup \Omega^1_{A,v}}K_2[X,Y]|\geq (1-2\mu^{1/32})|A||V|.
\end{align}
The sets $\Omega_{A,v}^0,\Omega_{A,v}^1$ are important because, given a triad of the form $T_{A,v}(X,Y)$, if we know $(X,Y)\in \Omega_{A,v}^{\tau}$ for some $\tau\in \{0,1\}$, then $T_{A,v}(X,Y)$ is mostly contained in the homogeneous triad $T(\mathbf{P}_{v,A}^{\tau},\mathbf{Q}_v^A)$ (we will make this precise shortly).  We now give a name to the triads of the form $T_{v,A}(X,Y)$ where $v\in \calS^A$ and $(X,Y)\in \Omega_{A,v}^0\cup \Omega_{A,v}^1$.
$$
\calT_A'=\{T_{A,v}(X,Y):  v\in \calS^A, (X,Y)\in \Omega_{A,v}^0\cup \Omega_{A,v}^1\}.
$$
We will also need a name for the following set of triads which are not too small.  
$$
\calT_A''=\{T\in \triads(\calD^A): |K_3^{(2)}(T)|\geq (1-\e^{1/32})|X||Y||W|/\ell\}.
$$
Finally, we define 
$$
\calT_A^{good}=\calT_A\cap \calT_A'\cap \calT_A''.
$$
We will show in Claim \ref{cl:ahom} below that the triads in $\calT_A^{good}$ are homogeneous with with respect to $H[A,V,W]$.  

\begin{claim}\label{cl:ahom}
Every $T\in \calT_A^{good}$ is $3\mu^{1/32}$-homogeneous with respect to $H[A,V,W]$.
\end{claim}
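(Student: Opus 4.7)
The plan is to show each $T \in \calT_A^{good}$ inherits its homogeneity from a single triad of the form $T(\mathbf{P}_{v,A}^\tau,\mathbf{Q}_v^A)$ produced by Claim \ref{cl:a}(4). Fix $T = T_{A,v}(X,Y) \in \calT_A^{good}$; since $T \in \calT_A'$ I may pick $\tau \in \{0,1\}$ with $(X,Y) \in \Omega_{A,v}^\tau$, so that $|\mathbf{P}_{v,A}^\tau \cap K_2[X,Y]| \geq (1-\mu^{1/32})|X||Y|$. The goal is $|d_H(T) - \tau| < 3\mu^{1/32}$.

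The decisive observation is that the sets $\{\mathbf{Q}_{v'}^A : v' \in \{0\} \cup \calS^A\}$ partition $K_2[A,W]$. Consequently, for any $v' \neq v$ and any $\tau'\in\{0,1\}$, the triangle set $K_3^{(2)}(T(\mathbf{P}_{v',A}^{\tau'},\mathbf{Q}_{v'}^A))$ is disjoint from $K_3^{(2)}(T)$, since the required $X$--$W$ edge forces $v' = v$. Writing $\Gamma_{A,v,\tau'} := E(H)^{\tau'} \cap K_3^{(2)}(T(\mathbf{P}_{v,A}^{\tau'},\mathbf{Q}_v^A))$, this localises $\Gamma_A \cap K_3^{(2)}(T)$ to $\Gamma_{A,v,\tau} \cup \Gamma_{A,v,1-\tau}$.

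Next I bound the ``wrong-sign'' contribution. A triple $(x,y,w) \in \Gamma_{A,v,1-\tau} \cap K_3^{(2)}(T)$ requires $xy \in \mathbf{P}_{v,A}^{1-\tau} \cap K_2[X,Y]$ (involving at most $\mu^{1/32}|X||Y|$ pairs, by disjointness of $\mathbf{P}_{v,A}^\tau$ and $\mathbf{P}_{v,A}^{1-\tau}$ together with the choice of $\tau$) and $xw \in \mathbf{Q}_v^A$ (at most $|W|/\ell$ choices per $x$ by hypothesis (\ref{sl:3})), giving $\leq \mu^{1/32}|X||Y||W|/\ell$. Combining the defining bound $|\Gamma_A \cap K_3^{(2)}(T)| \geq (1-10\e^{1/32})|K_3^{(2)}(T)|$ from $T \in \calT_A$ with $|K_3^{(2)}(T)| \geq (1-\e^{1/32})|X||Y||W|/\ell$ from $T \in \calT_A''$, I obtain
\[
|\Gamma_{A,v,\tau} \cap K_3^{(2)}(T)| \geq (1 - 10\e^{1/32} - 2\mu^{1/32})|K_3^{(2)}(T)|.
\]
Every triple in $\Gamma_{A,v,\tau}$ is an $E(H)^\tau$-triple, so this forces $|d_H(T) - \tau| \leq 10\e^{1/32} + 2\mu^{1/32} \leq 3\mu^{1/32}$; the final inequality holds because $\mu$ is a fixed positive power of $\e^{1/16}$, and the hypothesis that $\e$ is sufficiently small relative to $c$ and $k$ makes $10\e^{1/32} \leq \mu^{1/32}$.

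There is no real conceptual hurdle beyond bookkeeping: the approximations $|K_3^{(2)}(T)| \approx |X||Y||W|/\ell$ and $|N_{\mathbf{Q}_v^A}(x)| \leq |W|/\ell$ both carry the same $\ell^{-1}$ factor, and disjointness of the colours $\mathbf{Q}_{v'}^A$ is precisely what prevents an extraneous $\ell$ from surviving when we localise from the global set $\Gamma_A$ down to the single relevant triad $T(\mathbf{P}_{v,A}^\tau,\mathbf{Q}_v^A)$.
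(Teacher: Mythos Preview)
Your proof is correct and follows essentially the same route as the paper: both arguments show that $K_3^{(2)}(T)\cap\Gamma_A$ is, up to a $\mu^{1/32}$-fraction, contained in $E(H)^{\tau}\cap K_3^{(2)}(T(\mathbf{P}_{v,A}^{\tau},\mathbf{Q}_v^A))$, using the partition property of the $\mathbf{Q}_{v'}^A$ to localise to $v'=v$ and the $\Omega_{A,v}^{\tau}$ bound to control the wrong-sign piece. The only cosmetic difference is that the paper bounds $|K_3^{(2)}(T)\setminus K_3^{(2)}(T(\mathbf{P}_{v,A}^{\tau},\mathbf{Q}_v^A))|$ directly whereas you bound $|\Gamma_{A,v,1-\tau}\cap K_3^{(2)}(T)|$, but both reduce to the same count of pairs $xy\in K_2[X,Y]\setminus\mathbf{P}_{v,A}^{\tau}$ times $|W|/\ell$.
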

\begin{proof}
Fix $T\in \calT_A^{good}$.  Then $T$ has the form $T_{A,v}(X,Y)$ some $v\in \calS^A$ and $(X,Y)\in  \Omega_{A,v}^0\cup \Omega_{A,v}^1$.  Without loss of generality, say $(X,Y)\in   \Omega_{A,v}^1$ (the other case is similar).  Observe that
\begin{align*}
|E(H)\cap K_3^{(2)}(T)|&\geq |K_3^{(2)}(T)\cap K_3^{(2)}(T_{A,v}(\mathbf{P}_{v,A}^1, \mathbf{Q}_v^A))\cap \Gamma_A|\\
&\geq  |K_3^{(2)}(T)|-|K_3^{(2)}(T)\setminus \Gamma_A|-|K_3^{(2)}(T)\setminus K_3^{(2)}(T(\mathbf{P}_{v,A}^1, \mathbf{Q}_v^A))|.
\end{align*}
By definition of $\calT_A^{good}$, we know that $|K_3^{(2)}(T)\setminus \Gamma_A|\leq \e^{1/32}|K_3^{(2)}(T)|$.  Using that $(X,Y)\in \Omega^1_{A,v}$, hypothesis (\ref{sl:4}) and the fact that $\mathbf{Q}_v^A\subseteq Q_v$, we have
\begin{align*}
|K_3^{(2)}(G)\setminus K_3^{(2)}(T_{A,v}(\mathbf{P}_{v,A}^1, \mathbf{Q}_v^A))|&=\sum_{x\in X}|N_{\mathbf{Q}_v^A}(x)||Y\setminus N_{\mathbf{P}_{v,A}^1}(x)|\\
&\leq \sum_{x\in X}(|W|/\ell)|Y\setminus N_{\mathbf{P}_{v,A}^1}(x)|\\
&= (|W|/\ell)|K_2[X,Y]\setminus \mathbf{P}_{v,A}^1|\\
&\leq \e^{1/32}|X||Y||W|/\ell\\
&\leq 2\e^{1/32}|K_3^{(2)}(T)|,
\end{align*}
where the last inequality is since $T\in \calT_A^{good}$ implies $|K_3^{(2)}(T)|\geq (1-\e^{1/32})|X||Y||W|/\ell$.  Combining, we have that $|E(H)\cap K_3^{(2)}(T)|\geq (1-3\e^{1/32})|K_3^{(2)}(T)|$, as desired.
\end{proof}

We now show that most triples in $K_3[A,V,W]$ come from a triad in $\calT_A^{good}$.

\begin{claim}\label{cl:ahomcover}
We have $|\bigcup_{T\in \calT_A^{good}}K_3^{(2)}(T)|\geq (1-3\mu^{1/32})|A||V||W|$.
\end{claim}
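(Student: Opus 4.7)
The plan is to upper bound the total $K_3^{(2)}$-measure of triads in $\triads(\calD^A) \setminus \calT_A^{good}$ by separately controlling the complements of the three defining conditions $\calT_A$, $\calT_A'$, $\calT_A''$, and summing the three error terms.

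I would first handle $\calT_A''$. Since $A \subseteq U_{good}$, for each $x \in A$ we have $|N_{Q^{bad}}(x)| \leq \e^{1/8}|W|$, and since $Q_v \setminus Q_v^{good} \subseteq Q^{bad}$ and $|N_{Q_v}(x)| = |W|/\ell$ by hypothesis (\ref{sl:3}), this yields $|N_{\mathbf{Q}_v^A}(x)| \geq (1-\ell\e^{1/8})|W|/\ell \geq (1-\e^{1/32})|W|/\ell$ for every $v \in \calS^A$ (taking $\e$ small relative to $\ell$). Since $|K_3^{(2)}(T_{A,v}(X,Y))| = |Y|\cdot|\mathbf{Q}_v^A \cap K_2[X,W]|$, it follows that every triad $T_{A,v}(X,Y)$ with $v \in \calS^A$ automatically lies in $\calT_A''$. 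The only triads potentially outside $\calT_A''$ are therefore the $v=0$ triads, whose total $K_3^{(2)}$-size is at most $|\mathbf{Q}_0^A|\cdot|V| \leq 5\e^{1/8}|A||V||W|$ by Claim \ref{cl:a}(1).

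For $\calT_A'$, a triad $T_{A,v}(X,Y)$ is excluded only when either $v=0$ (already bounded above) or $v \in \calS^A$ and $(X,Y) \notin \Omega_{A,v}^0 \cup \Omega_{A,v}^1$. Inequality (\ref{al:omega}) bounds the measure of the bad $(x,y) \in A\times V$ by $2\mu^{1/32}|A||V|$ for each fixed $v$, and each such pair contributes at most $|N_{\mathbf{Q}_v^A}(x)| \leq |W|/\ell$ choices of $z$. Summing over the at-most-$\ell$ values $v \in \calS^A$ yields at most $2\mu^{1/32}|A||V||W|$ total bad triples.

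The main step is $\calT_A$, where one needs to lower bound $|\Gamma_A|$ using Claim \ref{cl:a}. The triangle sets $K_3^{(2)}(T(\mathbf{P}_{v,A}^\tau, \mathbf{Q}_v^A))$ are pairwise disjoint for $(v,\tau) \in \calS^A \times \{0,1\}$, and combining conclusion (2) (which gives $|N_{\mathbf{P}_{v,A}^0}(x)| + |N_{\mathbf{P}_{v,A}^1}(x)| \geq (1-4\e^{1/16})|V|$) with conclusion (1) yields
\[
\sum_{v,\tau}|K_3^{(2)}(T(\mathbf{P}_{v,A}^\tau, \mathbf{Q}_v^A))| \geq (1-4\e^{1/16})|V|\bigl(|A||W|-|\mathbf{Q}_0^A|\bigr) \geq (1-10\e^{1/16})|A||V||W|.
\]
Applying conclusion (4) then gives $|\Gamma_A| \geq (1-12\e^{1/16})|A||V||W|$. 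Lemma \ref{lem:averaging}, applied with $a=b=\sqrt{12}\,\e^{1/32}$ to the partition $\{K_3^{(2)}(T): T \in \triads(\calD^A)\}$, implies that triads $T$ with $|\Gamma_A \cap K_3^{(2)}(T)| \geq (1-\sqrt{12}\,\e^{1/32})|K_3^{(2)}(T)|$ cover at least $(1-\sqrt{12}\,\e^{1/32})|A||V||W|$; since $\sqrt{12}\,\e^{1/32} \leq 10\e^{1/32}$, all such triads lie in $\calT_A$. Adding the three contributions and noting that $\mu = 8(16\e^{1/16}c^4)^{1/(4k+4)}$ implies $\mu^{1/32}$ dominates both $\e^{1/8}$ and $\e^{1/32}$ for $\e$ sufficiently small, the total complement is bounded by $3\mu^{1/32}|A||V||W|$, as required. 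The only real obstacle is the $\Gamma_A$ bookkeeping, since one must carefully track that the disjointness and the two Claim \ref{cl:a} conclusions combine multiplicatively; the other two conditions are essentially direct consequences of the size bounds on $\mathbf{Q}_0^A$ and the homogeneity guarantee (\ref{al:omega}).
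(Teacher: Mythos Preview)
Your proposal is correct and follows the same three-part decomposition as the paper: bound separately the measure of triads outside $\calT_A$, $\calT_A'$, and $\calT_A''$, then sum. Your treatments of $\calT_A$ and $\calT_A'$ are essentially identical to the paper's (the paper phrases the $\calT_A$ step as an upper bound on $|K_3[A,V,W]\setminus\Gamma_A|$ rather than a lower bound on $|\Gamma_A|$, but the content is the same, and both rely on the directional form of Claim~\ref{cl:a}(4), namely $|E(H)^{\tau}\cap K_3^{(2)}(T(\mathbf{P}_{v,A}^{\tau},\mathbf{Q}_v^A))|\geq (1-2\e^{1/16})|K_3^{(2)}(T(\mathbf{P}_{v,A}^{\tau},\mathbf{Q}_v^A))|$, which is what the proof of that claim actually establishes).

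The one genuine difference is your handling of $\calT_A''$. The paper does not argue pointwise: instead it defines, for each $X\in\calD_A^A$, the set $\calS^A_{small}(X)=\{v\in\calS^A:|\mathbf{Q}_v^A\cap K_2[X,W]|\leq (1-\e^{1/32})|X||W|/\ell\}$ and shows $|\calS^A_{small}(X)|\leq \e^{1/16}\ell$ by comparing $\sum_v |\mathbf{Q}_v^A\cap K_2[X,W]|$ against $(1-5\e^{1/8})|X||W|$ from Claim~\ref{cl:a}(1). Your route is shorter: you observe that $N_{Q_v}(x)\setminus N_{\mathbf{Q}_v^A}(x)\subseteq N_{Q^{bad}}(x)$ and use $A\subseteq U_{good}$ to get $|N_{\mathbf{Q}_v^A}(x)|\geq |W|/\ell-\e^{1/8}|W|$ for every $x\in A$ and every $v\in\calS^A$ individually, so that \emph{all} triads with $v\in\calS^A$ land in $\calT_A''$. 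This is cleaner, but note that it needs the inequality $\ell\e^{1/8}\leq \e^{1/32}$, i.e.\ a polynomial smallness of $\e$ in $1/\ell$; this is permitted by the hypothesis of Proposition~\ref{prop:mainslice} that $\e$ is sufficiently small compared to $1/\ell$, whereas the paper's counting argument for $\calT_A''$ only needs $\e$ small in absolute terms.
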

\begin{proof}
We first bound the number of triples from $K_3[A,V,W]$ not in a $\calT_A$ triad.  By Claim \ref{cl:a} parts (1) and (2), and since the $\mathbf{Q}_v^A$ are pairwise disjoint, we have
\begin{align*}
|K_3[A,V,W]\setminus \Gamma_A|&\leq |\mathbf{Q}_0^A||V|+ \sum_{v\in \calS^A}\sum_{\tau\in \{0,1\}}|E(H)^{\tau}\setminus K_3^{(2)}(T(\mathbf{P}_{v,A}^{\tau}, \mathbf{Q}^A_v))|\\
&+\sum_{v\in \calS^A}|K_3^{(2)}(T(\mathbf{P}_{v,A}^2, \mathbf{Q}^A_v))|\\
&\leq 5\e^{1/8}|A||V||W|+\sum_{v\in \calS^A}\sum_{\tau\in \{0,1\}}2\e^{1/16}|K_3^{(2)}(T(\mathbf{P}_{v,A}^{\tau}, \mathbf{Q}^A_v))|\\
&+\sum_{v\in \calS^A}|K_3^{(2)}(T(\mathbf{P}_{v,A}^2, \mathbf{Q}^A_v))|\\
&\leq 5\e^{1/8}|A||V||W|+2\e^{1/16}|V|\sum_{v\in \calS^A}|\mathbf{Q}_v^A|+\sum_{x\in A}\sum_{v\in \calS^A}|N_{\mathbf{P}_{v,A}^{2}}(x)||N_{\mathbf{Q}_v^A}(x)|\\
&\leq 5\e^{1/8}|A||V||W|+2\e^{1/16}|A||V||W|+\sum_{x\in A}\sum_{v\in \calS^A}4\e^{1/16}|V||N_{\mathbf{Q}_v^A}(x)|\\
&\leq 5\e^{1/8}|A||V||W|+2\e^{1/16}|A||V||W|+4\e^{1/16}|A||V||W|\\
&\leq 11\e^{1/16}|A||V||W|.
\end{align*}
Consequently, by Lemma \ref{lem:averaging} we have
$$
\Big|\bigcup_{T\in \calT_A}K_3^{(2)}(T)\Big|\geq (1-11\e^{1/32})|A||V||W|.
$$
We now consider triads from $\calT_A'$.  By (\ref{al:omega}), 
\begin{align*}
|K_3[A,V,W]\setminus \Big(\bigcup_{T\in \calT_A'}K_3^{(2)}(T)\Big)|&\leq |\mathbf{Q}_0^A||V|+\sum_{v\in \calS^A}\Big(\sum_{\{(X,Y)\in (\calP_U\times \calP_V)\setminus (\Omega^0_{A,v}\cup \Omega^1_{A,v})\}}\Big(\sum_{x\in X}|Y||N_{\mathbf{Q}_v^A}(x)|\Big)\Big)\\
&\leq \frac{|W|}{\ell}\sum_{v\in \calS^A}\Big(\sum_{\{(X,Y)\in (\calP_U\times \calP_V)\setminus (\Omega^0_{A,v}\cup \Omega^1_{A,v})\}}|X||Y|\Big)\\
&\leq 2\mu^{1/32}|W||A||V|.
\end{align*}
We now turn to $\calT_A''$. Given $X\in \calD_A^A$, define 
$$
\calS^A_{small}(X)=\{v\in \calS^A:  |\mathbf{Q}_v^A\cap K_2[X,W]|\leq  (1-\e^{1/32})|X||W|/\ell\}.
$$
For $v\in \calS^A$ and $Y\in \calD^A_V$, we observe $T_{A,v}(X,Y)\notin \calT_A''$ if and only if $v\in \calS^A_{small}(X)$, since by definition,
$$
|K_3^{(2)}(T_{A,v}(X,Y))=|\mathbf{Q}_v^A\cap K_2[X,W]||Y|.
$$
By Claim \ref{cl:a}, we have 
\begin{align}\label{al:kq}
(1-5\e^{1/8})|X||W|\leq |K_2[X,W]\setminus \mathbf{Q}_0^A|.
\end{align}
By definition of $\calS_A^{small}(X)$, we have
\begin{align*}
|K_2[X,W]\setminus \mathbf{Q}_0^A|&\leq |\calS^A_{small}|(1-\e^{1/32})|X||W|/\ell+|\calS^A\setminus \calS^A_{small}||X||W|/\ell\\
&=\frac{|X||W|}{\ell}(|\calS^A|-\e^{1/32}|\calS^A_{small}|)\\
&\leq |X||W|(1-\e^{1/32}|\calS^A_{small}|/\ell),
\end{align*}
where the last inequality uses that $|\calS^A|\leq \ell$.  Combining with (\ref{al:kq}), we have that $ |X||W|(1-\e^{1/32} |\calS^A_{small}|/\ell)\geq (1-5\e^{1/8})|X||W|$, so 
$$
|\calS^A_{small}|\leq 5\e^{1/8-1/32}\ell\leq \e^{1/16}\ell.
$$
Thus  
\begin{align*}
|K_3[A,V,W]\setminus \Big(\bigcup_{T\in \calT_A''}K_3^{(2)}(T)\Big)|&\leq \sum_{X\in \calD_A^A}\sum_{v\in \calS^A_{small}(X)}|\mathbf{Q}_v^A\cap K_2[X,W]||V|\\
&\leq \sum_{X\in \calD_A^A}\sum_{v\in \calS^A_{small}(X)}|X||V||W|/\ell\\
&\leq \e^{1/16}|A||V||W|.
\end{align*}
We now have that
\begin{align*}
&|K_3[A,V,W]\setminus\Big( \bigcup_{T\in \calT_A^{good}}K_3^{(2)}(T)\Big)|&\leq (11\e^{1/16}+2\mu^{1/32}+\e^{1/16})|A||V||W|\leq 3\mu^{1/32}|A||V||W|.
\end{align*}
This finishes the proof of Claim \ref{cl:ahomcover}.
\end{proof}

\vspace{2mm}

\noindent{\bf Step VI.} We show that for all $A\in \calX\setminus \calX_{err}$, the partition $\calD_A^V$ arising from Step V is almost $6\mu^{1/128}$-good with respect to $H[A,V,W]$ (recall Definition \ref{def:almostgood}).  First, define
\begin{align*}
\calV_{A}=\{Y\in \calD_{V}^A: (\bigcup_{T\in \calT_A^{good}}K_3^{(2)}(T))\cap K_3[A,Y,W]|\geq (1-3\mu^{1/64})|A||Y||W|\}.
\end{align*}
Now set $\calG_{A}=\bigcup_{Y\in \calV_{A}}Y$.  By Claim \ref{cl:ahomcover} and Lemma \ref{lem:averaging}, $|\calG_{A}|\geq 3\mu^{1/64}|V|$.  We show every $Y\in \calV_{A}$ is almost $6\mu^{1/128}$-good with respect to $H[A,V,W]$. Given $T_{A,v}(X,Y)\in \calT_A^{good}$, let 
$$
B(T_{A,v}(X,Y))=\{xw\in \mathbf{Q}^A_v\cap K_2[X,W]: \frac{|N_H(xw)\cap Y|}{|Y|}\in [0,3\mu^{1/64})\cup (1-3\mu^{1/64},1]\}.
$$
Since any $T_{A,v}(X,Y)\in \calT_A^{good}$ is $3\mu^{1/32}$-homogeneous with respect to $H[A,V,W]$, a standard counting argument implies 
$$
|B(T_{A,v}(X,Y))|\geq (1-3\mu^{1/64})|\mathbf{Q}^A_v\cap K_2[X,W]|.
$$
Given $Y\in \calD_{V}^A$, let
$$
\calS(Y)=\{xw\in K_2[X,W]: |N_H(xw)\cap Y|/|Y|\in [0,3\mu^{1/128})\cup (1-3\mu^{1/128},1]\}.
$$
For every $Y\in \calD_{V}^A$ and $v\in \calS^A$, we have 
$$
|\calS(Y)\cap \mathbf{Q}_v^A|\geq |\bigcup_{T_{A,v}(X,Y)\in \calT_A^{good}}B(T_{A,v}(X,Y))|\geq (1-3\mu^{1/64})|\bigcup_{T_{A,v}(X,Y)\in \calT_A^{good}}\mathbf{Q}_v^A\cap K_2[X,W]|.
$$
Thus, for every $Y\in \calD_V^A$, 
\begin{align*}
|\calS(Y)|\geq\sum_{v\in \calS^A} |\bigcup_{T_{A,v}(X,Y)\in \calT_A^{good}}B(T_{A,v}(X,Y))|&\geq (1-3\mu^{1/64})\sum_{v\in \calS^A}|\bigcup_{T_{A,v}(X,Y)\in \calT_A^{good}}\mathbf{Q}_v^A\cap K_2[X,W]|.
\end{align*}
If $Y\in \calV_A$, then Lemma \ref{lem:averaging} implies 
$$
|\bigcup_{v\in \calS^A}\bigcup_{T_{A,v}(X,Y)\in \calT_A^{good}}\mathbf{Q}_v^A\cap K_2[X,W]|\geq (1-3\mu^{1/128})|A||W|.
$$
Thus, for every $Y\in \calV_A$, $|\calS(Y)|\geq (1-3\mu^{1/128})^2|A||W|$.  This shows that every $Y\in \calV_{A}$ is almost $6\mu^{1/128}$-good with respect to $H[A,V,W]$.  Since $|\calG_{A}|\geq 3\mu^{1/64}|V|$, this finishes our verfication that $\calD_A^V$ is almost $6\mu^{1/128}$-good with respect to $H[A,V,W]$. 

\vspace{2mm}

\noindent{\bf Step VII.} Define $\calP_V$ to be the common refinement of the $\calD_V^A$ as $A$ ranges over the elements in $\calX\setminus \calX_{err}$.  Note, using (\ref{pui}),
$$
|\calP_V|\leq  ((2c(\mu/8)^{-k})^{\ell})^{|\calX|}\leq ((2c(\mu/8)^{-k})^{\ell})^{(2\ell)^{\ell}\e^{-\ell}}=(2c(\mu/8)^{-k})^{\ell(2\ell)^{\ell}\e^{-\ell}}.
$$
By Lemma \ref{lem:refinement}, $\calP_V$ is $6\mu^{1/512}$-homogeneous with respect to $H[A,V,W]$ for every $A\in \calX\setminus \calX_{err}$.  Since 
$$
|V\setminus (\bigcup_{A\in \calX\setminus \calX_{err}}A)|\leq |U_{err}|\leq 3\e^{1/8}<\mu^{1/512},
$$
this implies $\calP_V$ is almost $6\mu^{1/512}$-good with respect to $H$. This concludes the proof of Proposition \ref{prop:mainslice}.
\qed

\vspace{2mm}

\subsection{Proof of Theorem}\label{ss:final}

In this section, we prove  Theorem \ref{thm:slvc}.

\vspace{2mm}

\noindent{\bf Proof of Theorem \ref{thm:slvc}.} Fix $k\geq 1$, and let $c_1=c_1(k)$ be from Proposition \ref{prop:vcremoval}.   We will use the following three related constants.
\begin{align*}
c_2=16c_1^4,\text{ }c_3=c_2^{1/(4k+4)(100000)}/2\text{ and }c_4=(4k+4)(100000)(16).
\end{align*}
Let $C_1, D_1$ be from Theorem \ref{thm:bipfoxpachsuk} applied with parameter $k$. For convenience, we will work with the constant $K_1=\max\{2,C_1,D_1\}$. We then set 
$$
K_2=300K_1^2c_3^{-200K_1},\text{ }K_3=200c_4K_1,\text{ and }K=K_2+K_3.
$$
We note here that the constants $c_1,c_2,c_3,c_4,K_1,K_2,K_3,K$ all depend only on $k$.

Now fix $\tau>0$ sufficiently small compared to all the parameters above.  We define  two additional ``small" parameters:
$$
\e=(c_3\tau)^{c_4}\text{ and }\mu=8(c_2\e^{1/16})^{1/(4k+4)}.
$$
The reader should think of $\e$ as being much smaller than $\mu$, and $\mu$ as being much smaller than $\tau$.  Note that in particular $\mu\leq 8(16\e^{1/16}c_1^4)^{1/4k+4}$ (this will be needed later for an application of Proposition \ref{prop:mainslice}).   We then set 
$$
\ell=K_1\e^{-100K_1}.
$$

Suppose $n$ is sufficiently large, and $H=(U\cup V\cup W, F)$ is a tripartite $3$-graph with $|U|=|V|=|W|=n$, and slicewise VC-dimension at most $k$.  We will show $H$ has a $\tau$-homogeneous partition of size at most $2^{2^{\tau^{-K}}}$.  The first main ingredient in the proof is the following claim.

\begin{claim}\label{cl:amalgamateU}
There exists a partition $\calP_V$ of $V$ which is almost $6\mu^{1/512}$-good with respect to $H$ and so that 
$$
|\calP_V|\leq (2c_1(\mu/8)^{-k})^{3^{\ell^2}\ell(2\ell)^{\ell}\e^{-\ell/8}}.
$$
\end{claim}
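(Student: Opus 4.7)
The plan is to carry out Steps 1 through 4 of the strategy outlined preceding this subsection. First, for each $x\in U$, apply Theorem~\ref{thm:bipfoxpachsuk} to the bipartite slice graph of $H_x$ on $V\cup W$ (which has VC-dimension less than $k$) at a suitable polynomial-in-$\e$ parameter. This yields an equipartition $\calP_x$ refining $\{V,W\}$ of total size $2\ell$ with $\ell\le K_1\e^{-K_1}$; because $|V|=|W|$, each of $V$ and $W$ receives exactly $\ell$ parts of uniform size $|V|/\ell$, and the homogeneity property of Definition~\ref{def:homgraph} holds for the pair decomposition. To fit the format demanded by Proposition~\ref{prop:mainslice}---in particular hypothesis~(5), which is a per-vertex bound on non-homogeneous neighbors in the reduced graph---apply Lemma~\ref{lem:averaging} to identify those parts with too many non-homogeneous neighbors and consolidate them into error pieces $B_x^0\subseteq V$ and $C_x^0\subseteq W$. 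The resulting partitions satisfy hypotheses~(3)-(5) of Proposition~\ref{prop:mainslice} at a slightly weaker parameter.

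Next, associate to each $x$ its reduced edge-colored bipartite graph $\calG_x=(P\cup Q,F_0^x,F_1^x,F_2^x)$, whose vertices label the non-error parts of $\calP_x$ and whose edges are colored dense, sparse, or non-homogeneous according to $H_x$. Since $\calG_x$ has at most $\ell$ labeled vertices per side and three edge colors, there are at most $3^{\ell^2}$ possibilities; partition $U$ into equivalence classes $X_1\sqcup\cdots\sqcup X_N$ with $N\leq 3^{\ell^2}$, where $x\equiv x'$ if and only if $\calG_x=\calG_{x'}$. Inside each class $X_i$, the common reduced graph $\calG^i$ together with the individual partitions produced in the previous step satisfies the hypotheses of Proposition~\ref{prop:mainslice} applied to the tripartite $3$-graph $H[X_i\cup V\cup W]$.

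Now apply Proposition~\ref{prop:mainslice} to each class $X_i$ to obtain a partition $\calP_V^{X_i}$ of $V$ that is almost $6\mu^{1/512}$-good with respect to $H[X_i,V,W]$ and of size at most $(2c_1(\mu/8)^{-k})^{\ell(2\ell)^\ell\e^{-\ell/8}}$. Let $\calP_V$ be the common refinement of $\calP_V^{X_1},\ldots,\calP_V^{X_N}$; then
\[
|\calP_V|\;\leq\;\prod_{i=1}^N |\calP_V^{X_i}|\;\leq\;\bigl(2c_1(\mu/8)^{-k}\bigr)^{3^{\ell^2}\cdot\ell(2\ell)^\ell\e^{-\ell/8}},
\]
which matches the size bound claimed. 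For the goodness assertion, Lemma~\ref{lem:refinement} implies that $\calP_V$, being a refinement of each $\calP_V^{X_i}$, inherits almost-goodness with respect to each $H[X_i,V,W]$; summing the small error contributions from the classes---and using that the $X_i$ partition $U$---establishes the almost $6\mu^{1/512}$-goodness with respect to the full $H$.

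The main obstacle lies in careful parameter tracking. Hypothesis~(5) of Proposition~\ref{prop:mainslice} demands a per-vertex bound rather than the global bound produced by Theorem~\ref{thm:bipfoxpachsuk}, and converting between the two via Lemma~\ref{lem:averaging} costs roughly a square root in the parameter; the subsequent refinement step via Lemma~\ref{lem:refinement} costs another. The elaborate definitions of $\e$, $\mu$, $\ell$, and the constants $K_1,K_2,K_3,K$ installed at the start of the proof of Theorem~\ref{thm:slvc} are designed precisely so that Proposition~\ref{prop:mainslice} is invoked at a small enough parameter for these cumulative losses to yield exactly the $6\mu^{1/512}$ goodness and the stated size.
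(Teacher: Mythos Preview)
Your proposal is correct and follows essentially the same approach as the paper: apply Theorem~\ref{thm:bipfoxpachsuk} to each slice graph $H_x$, prune parts with too many irregular neighbors to satisfy hypothesis~(\ref{sl:5}) of Proposition~\ref{prop:mainslice}, group vertices of $U$ by the isomorphism type of the resulting reduced edge-colored graph (the paper also records the number of parts $\ell_x$, giving $|\calE|\leq \ell 3^{\ell^2}$ rather than $3^{\ell^2}$, and discards classes of negligible size), apply Proposition~\ref{prop:mainslice} to each surviving class, and take the common refinement. The paper's proof is in fact terser than yours on the final goodness verification, ending immediately after the size computation.
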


We now give a sketch for how we will prove Claim \ref{cl:amalgamateU}.  We start by obtaining regular partitions $\calP_x$ for all the slice graphs $H_x$ for $x\in U$. We then group together elements $x\in U$ based on the auxiliary structures arising from the regular partition $\calP_x$ of $H_x$.  This will set us up to apply Proposition \ref{prop:mainslice} on $H$, restricted to each of these equivalence classes. We will then take the common refinements of the resulting partitions of $V$, and show this yields the conclusion of Claim \ref{cl:amalgamateU}.

\vspace{2mm}

\noindent{\bf Proof of Claim \ref{cl:amalgamateU}.} Fix $x\in U$. We define several auxiliary objects associated to the slice-graph $H_x$.  By Theorem \ref{thm:bipfoxpachsuk}, there exists an $\e^{100}$-homogeneous equipartition $\calP_x=\{V_x^1,\ldots, V^{\ell_x}_x, W_x^1,\ldots, W^{\ell_x}_x\}$ of $H_x$,   for some $\ell_x\leq \ell$, where $V=\bigcup_{i=1}^{\ell_x}V_x^i$ and $W=\bigcup_{i=1}^{\ell_x}W_x^i$.  We emphasize to the reader that $\calP_x$ is an \emph{equipartition}.  Let $Irr(\calP_x)$ be the set of pairs $(V_x^i,W^j_x)$ which are not $\e^{100}$-homogeneous for $H_x$. By assumption, and since $\calP_x$ is an equipartition, we have $|Irr(\calP_x)|\leq \e^{100}\ell_x^2$.  Define
\begin{align*}
\calP_x^{err,V}&=\{V_i^x: |\{W^j_x: (V^i_x,W^j_x)\in Irr(\calP_x)\}|\geq \e^{50}\ell_x\}\text{ and }\\
\calP_x^{err,W}&=\{W_i^x: |\{V^j_x: (V^j_x,W^i_x)\in Irr(\calP_x)\}|\geq \e^{50}\ell_x\}.
\end{align*}
Now let $B^0_x=\bigcup_{X\in \calP_x^{err,V}}X$ and $C^0_x=\bigcup_{X\in \calP_x^{err,W}}X$.  Clearly 
$$
|B^0_x|\leq \e^{50}|V|\text{ and }|C^0_x|\leq \e^{50}|W|.
$$
Fix enumerations
 \begin{align*}
 \{B_1^x,\ldots, B_{r_x}^x\}&=\{V_1^x,\ldots, V_{\ell_x}^x\}\setminus \calP_x^{err,V}\text{ and }\\
\{C_1^x,\ldots, C_{s_x}^x\}&=\{W_1^x,\ldots, W_{\ell_x}^x\}\setminus \calP_x^{err,W}.
\end{align*}
 We define an auxiliary edge-colored graph $\calG_x$ as follows.  Let $\calG_x$ have vertex set 
 $$
 \{b_0,c_0\}\cup \{b_i:i\in [r_x] \}\cup \{c_i: i\in [s_x] \}
 $$
  and edge colors $F^1_x,F_x^0,F_x^2$, where
\begin{align*}
F^1_x&=\{b_ic_j: i\in [r_x], j\in [s_x], |E(H_x)\cap K_2[B^i_x,C^j_x]|\geq (1-\e^{100})|B^i_x||C^j_x|\}\\
F^0_x&=\{b_ic_j: i\in [r_x], j\in [s_x], |E(H_x)\cap K_2[B^i_x,C^j_x]| \leq \e^{100} |B^i_x||C^j_x|\}\\
F^2_x&=\{b_ic_j: i=0, j=0,\text{ or }\e^{100} |B^i_x||C^j_x|\leq |E(H_x)\cap K_2[B^i_x,C^j_x]| \leq (1-\e^{100}) |B^i_x||C^j_x|\}.
\end{align*}
Clearly $|F^2_x|\leq 2\e^{50}\ell_x^2$.  Moreover, for all $1\leq i\leq r_x$ and $1\leq j\leq s_x$,
\begin{align}\label{gst}
|N_{F^2_x}(b_i)|&\leq \frac{|W|}{\ell_x}|\{w\in W: w\in W^j_x\text{ some} (V^i_x,W^j_x)\in Irr(\calP_x)\}|\leq \e^{50}\ell\text{ and }\nonumber\\
|N_{F^2_x}(c_j)|&\leq \frac{|V|}{\ell_x}|\{v\in V: v\in V^i_x\text{ some} (V^i_x,W^j_x)\in Irr(\calP_x)\}|\leq \e^{50}\ell.
\end{align}

Given $x,x'\in U$, write $x\equiv x'$ if and only if $\ell_x=\ell_{x'}$ and $\calG_x=\calG_{x'}$.  Clearly $\equiv$ is an equivalence relation.  Let $\calE$ be the set of equivalence classes of $\equiv$ in $U$.  Note $|\calE|\leq \ell3^{\ell^2}$.  To ease notation, given $X\in \calE$, let $\ell_X$, $r_X$, $s_X$, and $\calG_X$ be such that for all $x\in X$, $\ell_x=\ell_X$, $r_x=r_X$, $s_x=s_X$, and $\calG_x=\calG_X$.  By relabeling, we may assume $\calG_X$ has vertex set $\{p_0,\ldots,p_{r_X},q_0,\ldots, q_{s_X}\}$ and edge sets $F_{0}^X,F_{1}^X,F_{2}^X$.   Let 
$$
\calE_{err}=\{X\in \calE:  |X|<\e |V|/|\calE|\}\text{ and }U_{err}=\bigcup_{X\in \calE_{err}}X.
$$
Clearly $|U_{err}|\leq \e |U|$.  For each $X\in \calE\setminus \calE_{err}$, we apply Proposition \ref{prop:mainslice} to $\calG_X$ and $H[X,V,W]$ with parameters $r_X,s_X,\ell_X$, and $\e^{100}$.  This yields,  for all $X\in \calE\setminus \calE_{err}$, a partition $\calP^X_V$ of $V$ so that the following hold.
\begin{itemize}
\item $\calP_V^X$ is almost $6\mu^{1/512}$-good with respect to $H[X,V,W]$,
\item $\calP_V^X$ has size at most $(2c_1(\mu/8)^{-k})^{\ell(2\ell)^{\ell}\e^{-\ell}}$.
\end{itemize}
We now let $\calP_V$ be the common refinement of the $\calP_V^X$ as $X$ ranges over the elements of $\calE\setminus \calE_{err}$.  Note that since $|\calE|\leq \ell 3^{\ell^2}$, we have
$$
|\calP_V|\leq ((2c_1(\mu/8)^{-k})^{\ell(2\ell)^{\ell}\e^{-\ell}})^{|\calE|}\leq (2c_1(\mu/8)^{-k})^{3^{\ell^2}\ell^2(2\ell)^{\ell}\e^{-\ell}}.
$$
This ends the proof of Claim \ref{cl:amalgamateU}
\qed

\vspace{2mm}

We now finish the proof of the theorem.  By symmetry, analogues of Claim \ref{cl:amalgamateU} holds with the roles of $U,V,W$ permuted.  Say $\calP_U$, $\calP_V$, and $\calP_W$ are the resulting parititions of $U$, $V$, and $W$.  Let $\calP=\calP_U\cup \calP_V\cup \calP_W$.  Note
$$
|\calP|\leq 3(2c_1(\mu/8)^{-k})^{3^{\ell^2}\ell^2(2\ell)^{\ell}\e^{-\ell}}.
$$
By Proposition \ref{prop:goodhom}, $\calP$ is $28(6\mu^{1/512})^{1/64}$-homogeneous with respect to $H$.  Note 
$$
28(6\mu^{1/512})^{1/64}\leq \mu^{1/100000},
$$
so we have  $\calP$ is $\mu^{1/10000}$-homogeneous with respect to $H$. Recall we defined $\ell= K_1\e^{-100K_1}$, $\mu=8(c_2\e^{1/16})^{1/(4k+4)}$, and $\e=(c_3\tau)^{c_4}$.  Consequently, 
\begin{align*}
\mu^{1/10000}=(8(c_2\e^{1/16})^{1/(4k+4)})^{1/10000}&=(8(c_2(c_3\tau)^{c_4/16})^{1/(4k+4)})^{1/10000}\\
&=(8)^{1/10000}(c_2c_3^{c_4/16})^{1/((4k+4)(10000))}\tau^{c_4/((4k+4)(10000)(16))}\\
&\leq 2(c_2c_3^{c_4/16})^{1/((4k+4)(10000))}\tau^{c_4/((4k+4)(10000)(16))}\\
&=\tau,
\end{align*}
where the second to last inequality is because $8^{1/10000}<2$, and the last inequality is by definition of $c_2$, $c_3$ and $c_4$.  Therefore, $\calP$ is $\tau$-homogeneous.  We now consider the bound on the size of $\calP$.  By definition, $K_1\geq 2$, so we have $\ell=K_1\e^{-1000K_1} \geq \e^{-1}$.  Consequently, 
$$
3^{\ell^2}\ell^2(2\ell)^{\ell}\e^{-\ell}\leq (3^{\ell^2})^4=12^{\ell^2}.
$$
Thus, since we have $8\mu^{-1}<\e^{-1}$, $2c_1<\e^{-2}$, and  $\ell\geq \e^{-1}>k^{-1}$,
\begin{align*}
|\calP|\leq 3(2c_1(\mu/8)^{-k})^{12^{\ell^2}}\leq 3(2c_1\e^{-k})^{12^{\ell^2}}\leq 3(\e^{-2k})^{12^{\ell^2}}\leq \e^{-64^{\ell^2}}\leq 2^{2^{300 \ell^2}}.
\end{align*}
Since $\ell=K_1\e^{-100K_1}=K_1c_3^{-100K_1}\tau^{-100c_4K_1}$, we have 
$$
300\ell^2=300K_1^2\e^{-200K_1}=300K^2_1c_3^{-200K_1}\tau^{-200c_4K_1}=K_2\tau^{-K_3}\leq \tau^{-K_2-K_3}=\tau^{-K},
$$
where the last equalities are by definition of $K_2$, $K_3$, and $K$.  Combining with the above, this yields $|\calP|\leq 2^{2^{\tau^{-K}}}$, as desired.
\qed

\vspace{2mm}

\appendix

\section{}

We begin with a proof of Theorem \ref{thm:bipfoxpachsuk}.

\vspace{2mm}

\noindent{\bf Proof of Theorem \ref{thm:bipfoxpachsuk}.}. Let $c=c(k)$ be as in Theorem \ref{thm:foxpachsuk}. Set $D=2k+2$ and $C=2c$. Fix $\e>0$ and assume $H=(A\cup B, E)$ is a bipartite graph with VC-dimension less than $k$ with $|A|=|B|$. By Theorem \ref{thm:foxpachsuk}, there is an $\e$-homogeneous equipartition $\calP=\{V_1,\ldots, V_t\}$ of $V(H)$ with $8\e^{-1}\leq t\leq c\e^{-2k-1}$.  We begin by finding an equipartition of $\calP$ which refines $\{A,B\}$.  We start by defining, for each $i\in [t]$, let $A_i=A\cap V_i$ and $B_i=B\cap V_i$.  These sets will not all have the same size, so we will now collect together the ones which are too small, and split the remaining parts into equally sized pieces.  In particular, define
$$
\calA_0=\{A_i: i\in [t], |A_i|<\e |A|/t\}\text{ and }\calB_0=\{B_i: i\in [t], |B_i|<\e |B|/t\}.
$$
For each $A_i\notin \calA_0$, let $A_{i0},A_{i1},\ldots, A_{ij_i}$ be a partition of $A_i$ so that for each $1\leq u\leq j_i$, $|A_{iu}|=\lceil \e |A|/t\rceil$ and $|A_{i0}|\leq \e|A|/t$. Similarly, for each $B_i\notin \calB_0$, let $B_{i0},A_{i1},\ldots, B_{is_i}$ be a partition of $B_i$ so that for each $1\leq u\leq s_i$, $|B_{iu}|=\lceil \e |B|/t\rceil$ and $|B_{i0}|\leq \e|A|/t$.  Fix new enumerations 
\begin{align*}
\{A_1',\ldots, A_{t_1}'\}&=\{A_{iu}: A_i\notin \calA_0, 1\leq u\leq j_i\}\text{ and }\\
\{B_1',\ldots, B_{t_2}'\}&=\{B_{iu}: B_i\notin \calB_0, 1\leq u\leq s_i\}.
\end{align*}
Note $t_1,t_2\leq \e^{-1}t$.  Without loss of generality, assume $t_1\leq t_2$ (the other case is similar). Now define
\begin{align*}
A_0&=\Big(\bigcup_{A_i\in \calA_0}A_i\Big)\cup \Big(\bigcup_{A_i\notin \calA_0}A_{i0}\Big)\text{ and }B_0=\Big(\bigcup_{B_i\in \calB_0}B_i\Big)\cup \Big(\bigcup_{B_i\notin \calB_0}B_{i0}\Big)\cup \Big(\bigcup_{t_1<i\leq t}B_i'\Big).
\end{align*}
Clearly $|A_0|\leq \e |A|$.  Since each $|A_j'|$ and $|B_i'|$ have the same size, 
$$
|B_0|=|B|-t_1\lceil \e |B|/t\rceil = |A|-t_1\lceil \e |A|/t\rceil =|A_0|\leq \e |A|=\e|B|.
$$  
Now let $\calP'=\{A_0,B_0\}\cup \{A'_i: i\in [t_1]\}\cup \{B'_i: i\in [t_1]\}$.   Let $\Sigma_{reg}$ be the set of $\e$-homogeneous pairs from $\calP$, and let $\Sigma_{reg}'$ be the set of $\sqrt{\e}$-homogeneous pairs from $\calP'$.  By assumption, $\sum_{(X,Y)\in \Sigma_{reg}}|X||Y|\geq (1-\e)|V(H)|^2$.  Since $\calP'$ refines $\calP$, we have by Lemma \ref{lem:averaging} that for any $(X,Y)\in \Sigma_{reg}$,
$$
\Big| \bigcup_{\{(X',Y')\in \Sigma_{reg}': X'\subseteq X, Y'\subseteq Y\}}X'\times Y']\Big|\geq (1-\sqrt{\e})|X||Y|.
$$
Thus
$$
\Big| \bigcup_{(X',Y')\in \Sigma_{reg}'}X'\times Y'\Big|\geq (1-\sqrt{\e}) \Big| \bigcup_{(X,Y)\in \Sigma_{reg}}X\times Y\Big|\geq (1-\sqrt{\e})(1-\e)|V(H)|^2.
$$
Now choose arbitrary equipartitions $A_0=A_{01}\cup \ldots \cup A_{0t_1}$ and  $B_0=B_{01}\cup \ldots \cup B_{0t_1}$, and for each $i\in [t]$, let $A_i''=A'_i\cup A_{0i}$ and $B_i''=B_i'\cup B_{0i}$.  Note that for each $i\in [t_1]$, $|A_{0i}|<\e|A|/t_1=\e|A_i''|$ and $|B_{0i}|<\e|B|/t_1=\e|B_i''|$.  Consequently, if $(A_i',B_j')$ is in $\Sigma_{reg}'$, is is straightforward to show $(A_i'',B_j'')$ is $2\sqrt{\e}$-homogeneous. Finally, we set 
$$
\calP''=\{A_i'': i\in [t_1]\}\cup \{B_i'':i\in [t_1]\},
$$
and we let $\Sigma_{reg}''$ be the set of $2\sqrt{\e}$-homogeneous pairs from $\calP''$.  We then have
$$
\Big| \bigcup_{(X'',Y'')\in \Sigma_{reg}''}X'\times Y'\Big|\geq  \Big| \bigcup_{(X',Y')\in \Sigma_{reg}'}X'\times Y'\Big|-|A_0||B|-|B_0||B|\geq (1-2\sqrt{\e})|X||Y|.
$$
We have now constructed $\calP''$, a $2\sqrt{\e}$-homogeneous equipartition of $A\cup B$ refining $\{A,B\}$.  Note $|\calP''|=2t_1\leq 2\e^{-1}t\leq 2c\e^{-2k-2}=C\e^{-D}$.  This finishes the proof.
 \qed
 
 \vspace{2mm}
 
 We now provide the proof of Lemma \ref{lem:cover}.
 
  \vspace{2mm}

\noindent{\bf Proof of Lemma \ref{lem:cover}.}We build the partition by induction as follows.

\noindent{\bf \underline{Step 1:}} If there exists some $b_1\in B$ with $|N(b_1)\cap A|\geq \delta^{1/2} |A|$, let $A_1$ be any subset of $N(b_1)\cap A$ of size $\delta^{1/2}|A|$, and go to step 2.  Otherwise, set $t=0$. In this case, we have that $|E|\leq \delta^{1/2}|A||B|$. Consequently, there exists $A_0\subseteq A$ of size at least $(1-\delta^{1/4})|A|$ so that for all $x\in A_0$, $|N(x)\cap B|\leq \delta^{1/4}|B|$.  Now let $A_{err}=A\setminus A_0$ and end the construction. 

\noindent{\bf \underline{Step j+1:}} Suppose by induction we have defined $A_1,\ldots, A_j$.  If $|A\setminus (\bigcup_{j'=1}^jA_{j'})|\geq \delta^{1/4}|A|$ and there exists $b_{j+1}\in B$ with $|N(b_{j+1})\cap (A\setminus (\bigcup_{j'=1}^jA_{j'})|\geq \delta^{1/2}|A\setminus (\bigcup_{j'=1}^jA_{j'}))|$, then let $A_{j+1}$ be any subset of $N(b_{j+1})\cap (A\setminus (\bigcup_{j'=1}^jA_{j'})$ of size $\delta^{1/2}|A\setminus (\bigcup_{j'=1}^jA_{j'}))|$, and go to step $j+2$.  Otherwise, set $t=j$. In this case, $|E\cap K_2[A\setminus (\bigcup_{j'=1}^jA_{j'}),B]|\leq \delta^{1/2}|A\setminus (\bigcup_{j'=1}^jA_{j'})||B|$. Consequently, there exists $A_0\subseteq A\setminus (\bigcup_{j'=1}^jA_{j'})$ of size at least $(1-\delta^{1/4})|A\setminus (\bigcup_{j'=1}^jA_{j'})|$ so that for all $x\in A_0$, $|N(x)\cap B|\leq \delta^{1/4}|B|$.  Now let $A_{err}=A\setminus ((\bigcup_{j'=1}^jA_{j'})\cup A_0)$ and end the construction. 

Clearly this algorithm will halt at some stage $t\leq \delta^{-1/2}$, producing the desired partition.  
\qed

\vspace{2mm}

\bibliography{growth.bib}
\bibliographystyle{amsplain}

\end{document}